\theoremstyle{plain}
\newtheorem{thrm}{Theorem}[section]
\newtheorem{lemma}[thrm]{Lemma}
\newtheorem{prop}[thrm]{Proposition}
\newtheorem{cor}[thrm]{Corollary}
\newtheorem{rmrk}[thrm]{Remark}
\newtheorem{dfn}[thrm]{Definition}
\begin{document}
\newcommand{\psr}{P^-}
\newcommand{\Rn}{\mathbb R^n}
\newcommand{\Rm}{\mathbb R^m}
\newcommand{\R}{\mathbb R}
\newcommand{\Om}{\Omega}
\newcommand{\eps}{\varepsilon}
\newcommand{\p}{\partial}
\newcommand{\I}{\mathbb I}
\newcommand{\F}{\mathcal F}
\newcommand{\X}{\mathcal X}
\newcommand{\Y}{\mathcal Y}
\newcommand{\W}{\mathcal W}
\newcommand{\la}{\lambda}
\newcommand{\vf}{\varphi}
\newcommand{\B}{\mathcal{B}}

\newcommand{\K}{\mathcal{K}}

\numberwithin{equation}{section}

\title[Parabolic Harnack etc.]{Harnack inequality for degenerate fully nonlinear parabolic equations}

\author{Vedansh Arya}
\address{Department of Mathematics and Statistics\\
University of Jyväskylä\\ Finland}\email[Vedansh Arya]{vedansh.v.arya@jyu.fi}


\author{Vesa Julin}
\address{Department of Mathematics and Statistics\\
University of Jyväskylä\\ Finland}\email[Vesa Julin]{vesa.julin@jyu.fi}

%
%
%
\keywords{}
\subjclass{35K55, 35K65, 35B65}

\begin{abstract}
We consider degenerate fully nonlinear parabolic equations, which generalize the $p$-parabolic equation with $p>2$ to nondivergence form operators. We prove an intrinsic Harnack inequality for nonnegative solutions and a  weak Harnack inequality for nonnegative supersolutions. These results can be seen as the nondivergence form counterparts of the results by DiBenedetto, Gianazza and Vespri \cite{DGV1} and Kuusi \cite{Ku}.        
\end{abstract}
\maketitle
\section{Introduction and the statement of the main results}

In this paper we consider degenerate parabolic equations  of the type
\begin{equation}\label{maineq}
\p_t u  -   F(D^2u, Du, x, t) =0 \qquad \text{in } \, U_T,
\end{equation}
where $U_T = U\times (0,T]$ is  a cylindrical domain with $U   \subset \R^n$  an open set. We assume  that $F$ is uniformly elliptic with respect to the Hessian and degenerate with respect to the gradient. More precisely,  we assume that  there exist constants $0 < \lambda \leq \Lambda$ and $p> 2$ such that 
\begin{equation*}
    \lambda \, |\xi|^{p-2} \text{Tr}(N) \leq F(M+N,\xi,x,t) - F(M,\xi, x,t) \leq \Lambda \, |\xi|^{p-2} \text{Tr}(N)
\end{equation*}
for all symmetric matrices $M$ and $N\geq 0$, and $F(0,\xi, x,t) = 0$  for  every $\xi \in \R^n$ and $(x,t) \in U_T$. The equation \eqref{maineq} is in nondivergence form and thus the solution has to be interpret in viscosity sense. A canonical model for this class is  the $p$-parabolic equation
\[
\p_t u = \text{div}(|Du|^{p-2}Du) = |Du|^{p-2}\text{Tr}\Big( \big( I +(p\!-\!2) \frac{Du}{|Du|} \otimes \frac{Du}{|Du|}\big) D^2 u\Big), 
\]
highlighting the notation $p\!-\!2$ for the positive exponent.

This class of equations shares many features with the porous medium equation; notably, both exhibit \emph{finite speed of propagation} which results from their nonhomogeneous scaling.  This means that, if the support of a nonnegative initial solution is compact, it remains so throughout the evolution.  This phenomenon is explicitly seen  in the \emph{Barenblatt solution} 
\begin{equation}\label{eq:barenblatt}
\varphi(x,t) = \frac{1}{t^{n\alpha}} \left( 1- c \, \Big(\frac{|x|}{t^\alpha} \Big)^{\frac{p}{p-1}} \right)_+^{\frac{p-1}{p-2}}, 
\end{equation}
where $ \alpha = \frac{1}{n(p-2) +p}$ and $c =\frac{p-2}{p} \alpha^{\frac{1}{p-1}}$, which is the fundamental solution of the $p$-parabolic equation.  The $p$-parabolic equation and its generalizations to quasilinear equations in divergence  form  has been studied extensively and we refer to  \cite{Di} and \cite{DGV3} for an introduction to the topic. 

In contrast, our focus is on the equations in nondivergence form, aiming to prove a Harnack inequality for nonnegative solutions of \eqref{maineq}. This inequality is crucial because it implies H\"older continuity of solutions and estimates the speed at which their support expands. In order to state our results, we introduce homothetic cylinders with parameters 
 $\theta >0 $ and $\rho>0$ as  $Q_\rho(\theta) = B_\rho \times (-\theta \rho^p, \theta\rho^p)$, 
\[
Q_\rho^-(\theta) = B_\rho \times (-\theta \rho^p, 0] \quad \text{and} \quad Q_\rho^+(\theta) = B_\rho \times [0, \theta\rho^p),
\]
where $B_\rho$ is the ball with radius $\rho$ centered at the origin.

\emph{Throughout the paper, by a universal constant, we refer to a constant which  depends only  on the  ellipticity constants $\Lambda$ and $\lambda$,  $p$ and the  dimension $n$.}

Our first main result is a weak Harnack inequality for nonnegative viscosity supersolutions. 
\vspace{-1mm}
\begin{thrm}
\label{thm:weak-harnack}
Let $u$ be  a nonnegative viscosity supersolution  of \eqref{maineq} in $U_T$ and let $(x_0,t_0) \in U_T$. There exist  universal constants $\eps>0$, $c>0$ and $C>1$ such that if $u(x_0,t_0)>0$  and the  intrinsic cylinder $(x_0,t_0) + Q_{3\rho}^-(\theta)$,  with
$\theta =\left(\frac{c}{u(x_0,t_0)} \right)^{p-2}$, is in   $U_T$, then  it holds 
\[
\left(\frac{1}{|Q|}\int_{Q} u(x,t)^\eps \right)^{\frac{1}{\eps}} \leq C u(x_0, t_0), 
\]
where $Q=  (x_0, t_0 - \theta \rho^p) + Q_{\rho}^-(\theta)$ . 
\end{thrm}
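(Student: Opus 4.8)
The plan is to prove the equivalent \emph{distributional estimate}: there exist universal $\delta\in(0,1)$ and $C_0>0$ such that
$$|\{(x,t)\in Q:\ u(x,t)>\la\}|\ \le\ C_0\,\Big(\frac{u(x_0,t_0)}{\la}\Big)^{\!\delta}\,|Q|\qquad\text{for every }\la>0,$$
after which the theorem follows by integrating in $\la$ via the layer-cake formula and taking $\eps=\delta/2$. First I would translate $(x_0,t_0)$ to the origin and perform the intrinsic rescaling $u(x,t)\mapsto u(\rho x,\theta\rho^{p}t)/u(x_0,t_0)$, which carries $(x_0,t_0)+Q_{3\rho}^-(\theta)$ onto $Q_3^-(1)=B_3\times(-3^p,0]$ and $Q$ onto $B_1\times(-2,-1]$, keeps the normalization $F(0,\xi,\cdot)=0$, and preserves the two-sided structural bound on $F$ up to a multiplicative constant depending only on the (universal, to be fixed) constant $c$. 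Hence we may assume $\theta=\rho=1$ and $u(0,0)=1$, and it suffices to show $|\{u>\la\}\cap Q|\le C_0\la^{-\delta}|Q|$.

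The engine of the proof is a quantitative \emph{expansion of positivity} for nonnegative supersolutions, tuned to the nonhomogeneous intrinsic geometry: there are universal $\eta\in(0,1)$ and $b>1$ such that if $v\ge0$ is a supersolution in $B_{8r}(y)\times(\tau,\tau+b\,r^{p}\la^{2-p}]$ and $|\{x\in B_r(y):\ v(x,\tau)\ge\la\}|\ge\tfrac12|B_r|$, then $v\ge\eta\,\la$ on $B_{2r}(y)$ at time $\tau+b\,r^{p}\la^{2-p}$. I would establish this in two stages. (i) A De Giorgi type \emph{shrinking lemma}: iterating Caccioppoli/energy estimates for the truncations $(k-v)_+$ over the intrinsic cylinder of time-length $\sim r^{p}\la^{2-p}$ shows that $\{v<\sigma\la\}$ has density below the threshold required by the De Giorgi measure-to-pointwise lemma on a slightly smaller cylinder, for a universal $\sigma$; that lemma then upgrades this to the pointwise bound $v\ge\tfrac{\sigma}{2}\la$ on an interior sub-cylinder. (ii) A spreading step: on an annular space-time region, compare $v$ from below with a compactly supported self-similar sub-barrier of Barenblatt type for the extremal equation $\partial_t w=|Dw|^{p-2}\mathcal M^{-}_{\lambda,\Lambda}(D^2 w)$ --- a truncation of the profile in \eqref{eq:barenblatt}, with intrinsic time scale exactly $r^{p}\la^{2-p}$, which is initially supported in $B_r$ and spreads its positivity onto $B_{2r}$; since $\mathcal M^{-}_{\lambda,\Lambda}$ (the Pucci minimal operator) is dominated by the increments of $F$, this barrier is a subsolution of the original equation and the comparison principle applies, the degenerate set $\{Dw=0\}$ being avoided because the barrier has non-vanishing gradient on the annulus where contact can occur.

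Granting the expansion lemma, the distributional bound follows from a measure iteration of Krylov--Safonov type. The base case is where the normalization $u(0,0)=1$ enters: if $u>\la_0$ on more than a $(1-\nu_0)$-fraction of $Q$ for universal $\la_0,\nu_0$, then chaining a bounded number of expansions of positivity would propagate $u\gtrsim\la_0$ forward in time and inward to $(0,0)$, contradicting $u(0,0)=1$; hence $|\{u>\la_0\}\cap Q|\le(1-\nu_0)|Q|$. For the inductive step one proves $|\{u>\la_0^{k+1}\}\cap Q^{(k+1)}|\le(1-\nu_0)\,|\{u>\la_0^{k}\}\cap Q^{(k)}|$ along a slowly shrinking sequence of cylinders $Q^{(k)}\subset Q_3^-(1)$: a Calder\'on--Zygmund / crawling-ink-spots decomposition of $\{u>\la_0^{k+1}\}$ into sub-cylinders of high density is performed, on each such sub-cylinder the measure-to-pointwise lemma yields $u\gtrsim\la_0^{k+1}$ on an interior slice, $O(1)$ applications of the expansion of positivity spread this onto a fixed dilate of the parent cylinder, and there --- because $\la_0$ is chosen large relative to the constant $\eta$ --- one has $u>\la_0^{k}$; a covering/counting argument then gives the claimed decrement. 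Iterating yields $|\{u>\la_0^{k}\}\cap Q|\le(1-\nu_0)^{k}|Q|$ for all $k$, which is the weak-$L^\delta$ estimate with $\delta=\log\!\big(\tfrac1{1-\nu_0}\big)/\log\la_0$; finally, choosing $c$ small makes $(x_0,t_0)+Q_{3\rho}^-(\theta)$ large enough to contain all the dilated cylinders that appear in the chain.

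The main obstacle, where essentially all the difficulty lies, is the interaction of the gradient-degeneracy with the intrinsic scaling. Because $F$ carries the weight $|Du|^{p-2}$, every ingredient --- the Caccioppoli iterations in the shrinking lemma, the measure-to-pointwise passage, and the covering in the measure iteration --- must be run over \emph{level-dependent} intrinsic cylinders of time-length $\sim r^{p}\la^{2-p}$, and the ``waiting time'' consumed by each expansion of positivity must be summed over the $O(\log\la)$ doublings and shown to stay inside the reference cylinder, all while keeping the constants independent of $\la$; this bookkeeping is the technically demanding part. A secondary issue, particular to the nondivergence viscosity framework, is that $F$ genuinely degenerates on $\{Du=0\}$, so comparison is only available against barriers whose gradient does not vanish on the contact set --- which is why truncated Barenblatt profiles replace the polynomial barriers of classical Krylov--Safonov theory --- and a little care is needed to convert the pointwise conclusions of these comparisons into statements about the merely lower semicontinuous function $u$.
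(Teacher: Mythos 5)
There is a genuine gap, and it is at the heart of your argument: stage (i) of your ``expansion of positivity'' engine is built on Caccioppoli/energy estimates for truncations $(k-v)_+$ and a De Giorgi measure-to-pointwise lemma. These are divergence-form tools. The equation \eqref{maineq} has no divergence structure, $F$ depends merely measurably on $(x,t)$, and $u$ is only a viscosity supersolution of the extremal inequality \eqref{eq1}; there is no weak formulation, no integration by parts, and hence no Caccioppoli inequality or De Giorgi iteration available. The same problem undermines stage (ii): comparison from below with a Barenblatt-type subbarrier requires a pointwise ordering $u\ge$ barrier on the parabolic boundary of the comparison region, whereas your hypothesis only gives $v\ge\la$ on half of $B_r(y)$ \emph{in measure} at one time slice; converting measure information into pointwise lower bounds is exactly the step that, in the nondivergence setting, cannot be done by energy methods and essentially requires the Krylov--Safonov/ABP machinery you are trying to build. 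So the proposed proof of the expansion lemma would fail, and with it the base case and the crawling-ink-spots iteration that rest on it.

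The paper's route is designed precisely to avoid this. Its two building blocks are: (a) a \emph{basic measure estimate} (Lemma \ref{lem:ABP}), proved by an ABP-type argument with sliding test functions $\varphi_{(y,s)}$ adapted to the degeneracy (Savin/Wang contact-set plus area formula), which says that $u(0,0)\le 1$ forces the \emph{sublevel} set $\{u<4\}$ to occupy a fixed fraction of $Q_1^-$; and (b) a barrier comparison with a Barenblatt-like \emph{strict subsolution} $\psi$ (Lemmas \ref{lem:barenblatt}--\ref{lem:barrier}) used to propagate \emph{smallness}: if $u(0,0)\le m_0$, then at every time level there is a nearby point where $u\le L_0m_0$. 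Note the direction: smallness at a point propagates (an upper bound at one point per time slice), which is the statement accessible by comparison in the viscosity framework, rather than largeness-in-measure propagating to pointwise largeness. These two facts are then combined (Lemma \ref{prop:bmu}) and fed into a covering argument over paraboloid-shaped intrinsic sets $\B^{a_k}_r$ with a Vitali-type lemma, yielding the geometric decay $|Q_{k+1}\setminus A_{k+1}|\le (1-\tfrac1C)|Q_k\setminus A_k|+CL^{-k(p-2)}$ of superlevel sets and then the $L^\eps$ bound by layer-cake. Your overall skeleton (rescale, prove algebraic decay of superlevel sets, integrate) matches the paper, and your bookkeeping of level-dependent intrinsic cylinders is the right concern, but the core lemma must be replaced by measure-theoretic and barrier arguments of the above type rather than De Giorgi energy iteration.
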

\vspace{-1mm}
Our second main result is a Harnack inequality for nonnegative viscosity solutions.
\vspace{-1mm}

\begin{thrm} \label{thm:harnack}
Let $u$ be  a nonnegative viscosity solution  of \eqref{maineq} in $U_T$ and let $(x_0,t_0) \in U_T$. There exist  universal constants $ c_1 >0$  and $C>1$  such that if $u(x_0,t_0)>0$ and the  intrinsic cylinder $ (x_0,t_0) + Q_{4\rho}^-(\theta_1)$, with $\theta_1 = \left(\frac{c_1}{u(x_0,t_0)} \right)^{p-2}$, is in $U_T$ then  it holds 
\[
 \sup_{Q^-} u \leq  C u(x_0,t_0), 
\]
where  $Q^-=  (x_0, t_0 - \theta_1 \rho^p) + Q_{\rho}^-(\theta_1)$. 

Moreover, there exists a universal constant $ c_2\geq  c_1$  such that if  the  intrinsic cylinder $ (x_0,t_0 + 2\theta_2 \rho^p) + Q_{4\rho}^-(\theta_2)$, with $\theta_2 = \left(\frac{c_2}{u(x_0,t_0)} \right)^{p-2}$, is in $U_T$,  then  it holds 
\[
 \inf_{ Q^+} u \geq  \frac{1}{C} u(x_0,t_0), 
\]
where  $Q^+=  (x_0, t_0 + \theta_2 \rho^p) + Q_{\rho}^+(\theta_2)$. 

\end{thrm}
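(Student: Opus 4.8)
The plan is to follow the DiBenedetto--Gianazza--Vespri scheme for the intrinsic Harnack inequality, replacing the energy (Caccioppoli) estimates of the divergence-form theory by comparison with explicit barriers, and to use the weak Harnack inequality of Theorem~\ref{thm:weak-harnack} for the upper bound. First I would normalize. The class \eqref{maineq} is invariant, with universal constants preserved, under translations and under the intrinsic rescaling $u(x,t)\mapsto \kappa^{-1}u(x,\kappa^{2-p}t)$; hence I may assume $(x_0,t_0)=(0,0)$ and $u(0,0)=1$, so that $\theta_1=c_1^{p-2}$ and $\theta_2=c_2^{p-2}$ become universal and the two assertions read $\sup_{Q^-}u\le C$ and $\inf_{Q^+}u\ge 1/C$.

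The core is the lower bound, via an \emph{expansion of positivity} argument in three moves. \emph{(i) From a point value to a measure estimate.} Viscosity solutions of \eqref{maineq} are continuous (from the available De Giorgi--type oscillation estimates), so $u(0,0)=1$ forces $u\ge\tfrac12$ on a ball around the origin at time $0$, but with non-quantitative radius; to make it quantitative I would run the usual covering argument, comparing $s\mapsto\sup_{\bar B_s\times\{0\}}u$ with $s\mapsto(1-s/R)^{-\beta}$ on $[0,R)$ for a large universal $\beta$ and reading off, at the last crossing $\bar s$, a point $\bar x\in\bar B_{\bar s}$ with $u(\bar x,0)=:M'\ge1$ and $u\le 2^{\beta}M'$ on a ball $B_r(\bar x)\times\{0\}$, $r\sim R-\bar s$. \emph{(ii) Expansion of positivity.} If $u(\cdot,s)\ge m$ on a fixed fraction of $B_r(y)$, then $u\ge\eta m$ on $B_{2r}(y)$ for all $t$ in the intrinsic interval $[\,s+\delta m^{2-p}r^p,\ s+2\delta m^{2-p}r^p\,]$, with $\eta,\delta$ universal; in the viscosity framework this comes from touching $u$ from below by a rescaled, truncated Barenblatt-type subsolution, which is where finite speed of propagation and the nonhomogeneous scaling enter. \emph{(iii) Iteration.} Iterating (ii) a universal number of times expands the positivity set from $B_r(\bar x)$ to $B_\rho(0)$; each step costs an intrinsic time lag $\sim m^{2-p}(\text{radius})^p$, and since $p>2$ these lags sum to an order-one universal multiple of $\rho^p$, which places the conclusion inside the time-slab of $Q^+$ and forces $\bar s$, hence $\bar x$, to be near the origin. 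This yields $\inf_{Q^+}u\ge1/C$.

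For the upper bound I would argue by contradiction. If $u(\bar x,\bar s)=L$ with $(\bar x,\bar s)\in Q^-$ and $L$ very large, the intrinsic cylinder at $(\bar x,\bar s)$ of height governed by $L^{2-p}$ is tiny, hence admissible; since $\bar s<0$, the point $(0,0)$ lies ahead of $(\bar x,\bar s)$ in time, and running the expansion of positivity (ii) forward from $(\bar x,\bar s)$, iterated so as to span the time gap $|\bar s|$, propagates $L$ to a definite lower bound on a ball containing the origin at some time below $t_0$ --- that is, a large $L^{\eps}$-mass of $u$ on a cylinder sitting below $t_0$. Feeding this into Theorem~\ref{thm:weak-harnack} centered at $(0,0)$, which bounds that $L^{\eps}$-average by $Cu(0,0)=C$, forces $L\le C'$, a contradiction; a routine covering then gives $\sup_{Q^-}u\le C$ (equivalently, one may chain the weak Harnack with a local boundedness estimate for viscosity subsolutions along intrinsic cylinders).

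I expect the main obstacle to be the quantitative bookkeeping of the intrinsic time lags in move (iii) and in the contradiction step: because the natural time scale here is $t\sim u^{2-p}\rho^p$ rather than $t\sim\rho^2$, the number of expansions needed to cross a fixed time gap depends on the value being propagated, and the constants $\beta,\eta,\delta$ and $c_1,c_2$ (with $c_2\ge c_1$ coming from the extra room the lower bound needs) must be tuned so the accumulated lag lands on the prescribed time-slabs of $Q^\pm$ while every intermediate intrinsic cylinder stays inside $U_T$. The secondary difficulty is move (i), upgrading the single value $u(0,0)=1$ to a scale-invariant measure estimate without the measure-theoretic machinery of the divergence-form setting.
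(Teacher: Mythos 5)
Your plan transplants the DiBenedetto--Gianazza--Vespri scheme, but its two load-bearing steps are exactly the ones that have no counterpart in the nondivergence/viscosity setting, and the proposal does not supply substitutes. In move (ii) you claim expansion of positivity ``if $u(\cdot,s)\ge m$ on a fixed fraction of $B_r(y)$, then $u\ge \eta m$ on $B_{2r}(y)$ later'' by touching $u$ from below with a truncated Barenblatt-type subsolution. A comparison/touching argument for a viscosity supersolution requires pointwise control of $u$ on the parabolic boundary of the comparison region (or on a full ball), not control on a measurable fraction of a ball: a barrier cannot ``see'' measure information. In the divergence-form theory this measure-to-point upgrade is done with energy estimates and De Giorgi iteration, which are unavailable here; building the replacement is precisely the content of the paper (the sliding test-function measure estimate of Lemma \ref{lem:ABP}, the propagation of boundedness in Lemma \ref{lem:propagation-space-2}, and the growth Lemma \ref{prop:subsolution}). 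The same objection hits move (i): qualitative continuity of a viscosity solution gives no universal radius, and your crossing argument with $(1-s/R)^{-\beta}$ only yields an \emph{upper} bound near the crossing point; turning the single value $M'$ into a quantitative lower bound on a ball (or a measure fraction) again needs the missing machinery. Finally, the contradiction step for the sup bound has an additional problem: Theorem \ref{thm:weak-harnack} centered at $(0,0)$ uses the intrinsic cylinder determined by $u(0,0)$, so a single huge value $L$ at $(\bar x,\bar s)\in Q^-$ only feeds into it after you spread $L$ over a definite fraction of that fixed cylinder; propagating $L$ forward across the fixed gap $\sim\theta_1\rho^p$ with intrinsic lags $\sim L^{2-p}r^p$ forces either radii that leave $U_T$ or a number of iterations growing with $L$, each degrading the value by $\eta$, so ``a large $L^{\eps}$-mass below $t_0$'' does not follow.

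For contrast, the paper does not use expansion of positivity at all. The backward sup bound is proved in the Krylov--Safonov/Caffarelli--Cabr\'e style: Lemma \ref{prop:subsolution} (built from Corollary \ref{Cor:bmu1} and Lemma \ref{lem:propagation-space-3}) shows that a point where $u\ge 2\nu^{k-1}L_1m_0$ forces a nearby point, in a shrinking intrinsic cylinder, where $u\ge 2\nu^{k}L_1m_0$; iterating produces a convergent sequence along which $u$ blows up, contradicting continuity. The forward inf bound is then \emph{deduced} from the backward sup bound by a continuity (smallest-radius) argument applied at the point realizing the infimum, which is also where the constant $c_2\ge c_1$ and the rigid waiting time arise naturally. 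If you want to salvage your outline, you would first have to prove, in the viscosity framework, a measure-to-point lemma of Krylov--Safonov type at intrinsic scales --- at which point you are essentially reconstructing the paper's Sections 3--5 rather than bypassing them.
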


The statement of  Theorem \ref{thm:harnack}  differs slightly from the previous  results in the literature (see e.g.  \cite{DGV3}) as the waiting time  $\theta_2$ does not equal to  $\theta_1$. We may always assume that $\theta_1 \leq \theta_2$, but we  give a simple example in Section \ref{subsec:ex}, which shows that the inequality is in general strict.    We stress that this  is not related to the fact that  the equation is in nondivergence form. Indeed, the same example applies also to the divergence form equations. We may obtain $\theta_1 = \theta_2$ by a scaling argument, if  we strengthen the assumptions by requiring that the solution is nonnegative in a large cylinder, where the size depends on the ellipticity constants. 
 Result of this type is proven in \cite{KurS} in a special  case of \eqref{maineq}.

Harnack inequality for the  $p$-parabolic equation is proven by DiBenedetto  \cite{Di88,Di} and for general quasilinear equations in divergence form by  DiBenedetto, Gianazza and Vespri \cite{DGV1, DGV2, DGV3}. At the same time  the weak Harnack inequality for nonnegative supersolution for divergence form equations is proven by Kuusi  \cite{Ku} and we also refer to his PhD thesis \cite{KuThesis} for further discussion on the topic. In \cite{DH} Daskalopoulos and Hamilton  studies the evolution of the support of nonnegative solution  of the porous medium equation as a geometric flow. The result  was later proven for the  $p$-parabolic equation in \cite{DH1}, but the phenomenon does not seem to be completely understood. We also mention a related work  \cite{Av}, where the author studies the finite speed of propagation and its impact on the boundary value problem. 

For linear equations in nondivergence form  the Harnack inequality is due to the celebrated  result by Krylov and Safonov \cite{KS1,KS}. After the works by Trudinger \cite{Tru80} and Caffarelli \cite{Ca} for nonlinear elliptic equations, the Harnack inequality was proven by Wang \cite{W1} for homogenous  fully nonlinear parabolic equations, i.e., the case $p=2$. We refer to \cite{is} for a comprehensive introduction to the topic. 

As we already mentioned, the nonhomogenous  equation \eqref{maineq} is the nondivergence form counterpart to the equation studied in \cite{DGV1, DGV3}. Prior to our work, the Harnack inequality was proven for 
\begin{equation}\label{eq:model}
u_t= |Du|^{p-2} \text{Tr}\Big( \big( I +(q\!-\!2) \frac{Du}{|Du|} \otimes \frac{Du}{|Du|}\big) D^2 u\Big) = |Du|^{p-2} \Delta_q^N u
\end{equation}
in the series of papers \cite{KPS, KurS, PV}, which also include  the singular case $p <2$, where  we  have an elliptic type Harnack for \eqref{eq:model}, i.e., Harnack without the waiting time. In these works, the proof is based on a  cleverly constructed suitable comparison function. We also mention that an intrinsic Harnack inequality for nonhomogeneous fully nonlinear parabolic equation, where the nonhomogeneity is due to a gradient drift term, is proven in \cite{A,AJ}. However, the Harnack inequality for the general equation \eqref{maineq}  remained open.

We recall that by the classical result of Safonov  \cite{Saf},  the optimal regularity for \eqref{maineq} is H\"older continuity. This is proven in \cite{De}  in the elliptic case, where we even have a Harnack inequality  \cite{IS1} (see also \cite{M}),   but for parabolic equations of the form   \eqref{maineq} this was not known. Here we obtain the H\"older continuity as a corollary of Theorem \ref{thm:harnack}.

\begin{cor}
Let $u$ be  a  viscosity solution  of \eqref{maineq} in $U_T$. Then $u$ is locally H\"older continuous in $U_T$ with a universal exponent $\alpha \in (0,1)$
\end{cor}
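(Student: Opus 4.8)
The plan is to deduce the local Hölder continuity of a viscosity solution $u$ of \eqref{maineq} from the intrinsic Harnack inequality of Theorem \ref{thm:harnack} by the standard oscillation-decay argument, carefully adapted to the intrinsic geometry forced by the $p$-inhomogeneity. Fix a point $(x_0,t_0) \in U_T$; after translating we may assume $(x_0,t_0) = (0,0)$. The quantities $u$ and $\pm u + \text{const}$ are again viscosity solutions (and in particular nonnegative solutions after adding a constant and restricting to a subcylinder), so the mechanism is to show that for a suitable universal $\sigma \in (0,1)$ the essential oscillation of $u$ over an intrinsic cylinder of ``radius'' $\rho$ shrinks by the factor $\sigma$ when one passes to an intrinsic cylinder of radius $\rho/K$ for a fixed universal $K>1$, and then iterate.

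The key steps, in order, are as follows. First I would reduce to a normalized situation: given a reference cylinder on which $\operatorname{osc} u = \omega$, I work with $v = u/\omega$ (which solves an equation of the same structural type, since the ellipticity inequalities and the $p$-homogeneity are preserved under multiplication of $u$ by a positive constant — only $\lambda,\Lambda$ are unchanged) so that $0 \le v \le 1$ on the reference cylinder, and I choose the intrinsic time-scale using $\theta \sim \omega^{2-p}$ — this is the crucial point where the inhomogeneous scaling enters, and it is why the cylinders must be chosen ``intrinsically'' in terms of the current oscillation. Second, one of the two functions $v$ or $1-v$ has measure of its super-level set at mid-height bounded below, so without loss of generality $w := v$ satisfies $w \ge 1/2$ on a set of definite measure inside, say, the top half of the reference cylinder; pick a point where $w$ is close to $1$, apply the infimum part (the expansion-of-positivity/$\inf$ half) of Theorem \ref{thm:harnack} — after checking the intrinsic inclusion $(x_0,t_0 + 2\theta_2\rho^p) + Q_{4\rho}^-(\theta_2) \subset$ reference cylinder holds because $\theta_2 \sim w(x_0,t_0)^{2-p} \ge$ const — to conclude $w \ge c$ on a full intrinsic cylinder $Q^+$ of radius $\rho$ sitting a definite amount later in time. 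This forces $\operatorname{osc}_{Q'} u \le (1-c)\,\omega$ on that smaller cylinder $Q'$. Third, I would record that the new intrinsic cylinder $Q'$ has ``radius'' $\rho$ but the new oscillation bound $\omega' = (1-c)\omega$, so its intrinsic time-scale $(\omega')^{2-p}\rho^p$ is comparable (up to a universal factor) to the old one; by shrinking the spatial radius by a universal factor $K$ and correspondingly the time interval, I obtain a genuinely nested intrinsic cylinder of radius $\rho/K$ on which $\operatorname{osc} u \le (1-c)\omega$. Iterating $j$ times gives $\operatorname{osc}$ over the intrinsic cylinder of radius $K^{-j}\rho$ bounded by $(1-c)^j \omega$; translating this geometric decay into a modulus of continuity, and absorbing the $\omega$-dependence of the time-scaling (which only affects how fast time-oscillation decays relative to space-oscillation), yields $|u(x,t) - u(y,s)| \le C(|x-y| + |t-s|^{1/p})^{\alpha'}$ locally, hence Hölder continuity in the parabolic sense with a universal exponent $\alpha \in (0,1)$.

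The main obstacle is the bookkeeping of the \emph{intrinsic} cylinders through the iteration: unlike the homogeneous case $p=2$, the time-direction is stretched by a factor depending on the current oscillation $\omega_j = (1-c)^j\omega$, which \emph{grows} geometrically as $\omega_j \to 0$ (because $2-p<0$), so one must verify that the cylinders at each stage still nest inside the previous one and still lie in $U_T$, and one must check that the hypotheses of Theorem \ref{thm:harnack} — in particular the inclusion of the enlarged intrinsic cylinder $Q_{4\rho}^-(\theta_2)$ and the fact that $\theta_2 = (c_2/w(x_0,t_0))^{p-2}$ with $w(x_0,t_0)$ bounded \emph{below} by a universal constant after normalization — are genuinely met at every step. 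Handling this requires choosing, once and for all, the universal spatial contraction factor $K$ large enough (depending on $c_1, c_2, c$ and $p$) that the expansion in time caused by the smaller oscillation is more than compensated by the contraction $\rho \mapsto \rho/K$ in space; this is routine but is the place where all constants must be tracked simultaneously, and it is the only genuinely non-mechanical part of the argument. A minor additional point is that one must also run the argument on a forward-in-time cylinder (using the $\inf Q^+$ estimate) and then, if a two-sided modulus of continuity at an interior point is wanted, combine it with a comparison between the value at $(0,0)$ and values on the later cylinder, but this is standard.
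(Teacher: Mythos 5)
Your proposal is correct and follows essentially the same route the paper intends (and leaves to the reader): the standard oscillation-decay iteration on nested intrinsic cylinders with $\theta\sim\omega^{2-p}$, driven by the forward/$\inf$ half of Theorem \ref{thm:harnack} applied to $u-m_j$ and $M_j-u$, with the only real work being the tracking of the intrinsic geometry, exactly as in \cite{Di, DGV3}. One slip worth noting: $v=u/\omega$ does \emph{not} solve an equation with the same $\lambda,\Lambda$ under mere division by a constant — it is the accompanying intrinsic rescaling of time by $\omega^{-(p-2)}$ (Remark \ref{rem:scaling}) that restores the structure, which is what your choice $\theta\sim\omega^{2-p}$ implicitly does, so the argument itself is unaffected.
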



Higher-order regularity for equations \eqref{maineq} under stronger assumptions has been extensively studied in both the elliptic and parabolic cases. The authors in \cite{IS13} prove the \( C^{1,\alpha} \) regularity for the equation \( |Du|^{p-2}F(D^2u) = f \), with \( p \geq 2 \). We refer to \cite{APPT, ART} for optimal regularity and \cite{BV, BD} for regularity up to the boundary. In the parabolic case, Hölder continuity of the gradient for solutions of the equation \eqref{eq:model} has been proven in \cite{IJS} for \( p > 1 \). We refer to \cite{At, AR} for equations with inhomogeneity \( f \), \cite{LLY} for its extension to fully nonlinear equations, \cite{LLYZ1} for optimal regularity, and to \cite{LLYZ} for results on regularity up to the boundary. For more comprehensive list of references on the topic see  \cite{LLY} .



 \subsection{Outline of the proof}

Similar to divergence form equations, the main challenge in establishing the Harnack and weak Harnack inequalities for equation \eqref{maineq} is due to its nonhomogeneous scaling. The nonhomogeneity  introduces two purely nonlinear phenomena which we have to take into account in the proof. The first  is the finite speed of propagation, which we already mentioned, and the second is the possible finite time blow up of the solution, which we will discuss in Section \ref{subsec:ex}.

From a technical perspective, these issues arise when attempting to apply a standard covering argument (see, for example, \cite{is}) to prove the weak Harnack inequality. The difficulty lies in the fact that the intrinsic cylinders $Q_\rho^-(\theta)$, with $\theta \simeq s^{2-p}$, associated with superlevel sets $\{ u > s \}$ may shrink faster than the level sets themselves decay as $s$ increases. As a result, covering the superlevel sets with very flat cylinders becomes problematic. This difficulty is even more severe in the proof of the full Harnack inequality, since the estimate provided by the weak Harnack inequality alone is insufficient. Specifically, proving Theorem \ref{thm:harnack} requires obtaining an estimate similar to Theorem \ref{thm:weak-harnack} not just within a single cylinder $Q$, but uniformly for all intrinsic cylinders  inside $Q$.

The strategy of the proof is to first establish a so called basic measure estimate, which states that if $u (0,0)\leq 1$ then the measure of the sublevel set $\{ u <4\}$ in $Q_1^-$ is positive (see Lemma \ref{lem:ABP}). This is a general argument and holds also in the singular case $p \in (1,2)$. Due to the nonhomogeneity of the equation, it is not clear how to apply  the classical ABP method  in this setting.  Instead, we adopt the idea of sliding paraboloids from \cite{Sa, Wa}, replacing paraboloids with test functions better suited to our setting. Similar ideas have been employed in \cite{AS} in elliptic homogenization and in \cite{IS1, M} for degenerate elliptic equations.

We call the second fundamental estimate that we need  \emph{propagation of boundedness}, and it states that  if $u(0,0) \leq m_0$ then the sublevel sets $\{u < L_0^km_0\}$ become increasingly dense in space \emph{at every time level}, where   $L_0>1$ and $m_0>0$ are universal constants (see Lemma \ref{lem:propagation-space-2}). This is the most novel part of the proof, and such an estimate, to the best of our knowledge, is new in the context of parabolic equations in nondivergence form. We present this result only for the degenerate case $p > 2$, but a similar argument extends readily to the homogeneous case $p=2$.

Using these two  fundamental estimates, we are then able to quantify the algebraic decay of the level sets  $\{ u>s\}$  in every intrinsic cylinder $(x,t)+ Q_\rho^-(\theta)$, which is the key in the proof of Theorem~\ref{thm:harnack}.

\section{Notations and Preliminaries}\label{s:n}
A point in space-time will be denoted by $(x,t) \in \R^n \times \R$, where $x \in \R^n$ is a point in space. The inner product of $x,y \in \R^n$ is denoted by $x \cdot y$ and $x \otimes y$ denotes the tensor product, which is a rank one $n \times n$ matrix.  Given a real valued function $f : \Omega \subset \R^n \! \times \! \R \to \R $, we denote by 
$Df$, $D^2 f$ the gradient and the Hessian with respect to  space  and $D_{x,t}f$ is the full gradient  in space-time. We denote the partial derivative  in time by $\p_t f$ and  $\p_{x_i} f$ in space in the direction of $x_i$. We denote by $C(\Omega)$ the class of continuous functions and by $C^k(\Omega)$ the class of functions which are $k$-times continuously differentiable.

We  denote Euclidean norm of $x \in \R^n$ by $|x|$. For a given measurable set $A  \subset  \R^{n+1}$, $|A|$ is  the $(n\!+\!1)\text{-dimensional}$  Lebesgue measure of $A$. We denote the ball with radius $\rho$ centered at $x$ by $B_\rho(x)  \subset  \R^n$ and by $B_\rho$ if it is centered at the origin.  Recall that  homothetic cylinders of radius $\rho>0$  and scaling $\theta >0$ are defined as $Q_\rho(\theta) = B_\rho \!\times \!(-\theta\rho^p, \theta\rho^p)$,
\begin{equation} \label{def:cylinders1}
Q_\rho^-(\theta) = B_\rho\! \times \!(-\theta \rho^p, 0] \quad  \text{and} \quad  Q_\rho^+(\theta) = B_\rho\! \times\! [0, \theta \rho^p).
\end{equation}
In the case $\theta =1$ we simply denote $Q_\rho= Q_\rho(1)$,  $Q_\rho^+= Q_\rho^+(1)$ and $Q_\rho^-= Q_\rho^-(1)$. If the cylinder is centered at $(x_0,t_0)$, we denote it by $(x_0,t_0) + Q_\rho^{\pm}(\theta)$, that is, 
\[
(x_0,t_0) + Q_\rho^-(\theta) =  B_\rho(x_0)\! \times \!(t_0 -\theta\rho^p, t_0]  \quad  \text{and} \quad (x_0,t_0) + Q_\rho^+(\theta) =  B_\rho(x_0)\! \times \![t_0, t_0 +\theta\rho^p).
\]

Let us next recall the definition of a viscosity solution. First, we say that the function $\varphi \in C(\Omega)$, defined in a cylindrical domain $\Omega = U \!\times\! (T_1, T_2]$, \emph{touches the function $u :\Omega \to \R$ from below at $(x_0,t_0)$ if}
\begin{equation}\label{eq:below}
\varphi(x,t) \leq u(x,t) \qquad \text{for all } \, x \in U, \,\, t \leq t_0 \quad \text{and } \quad \varphi(x_0,t_0) = u(x_0,t_0).
\end{equation}
Similarly the function $\varphi \in C(\Omega)$ \emph{touches the function $u :\Omega \to \R $ from above  at $(x_0,t_0)$ if}
\begin{equation}\label{eq:above}
\varphi(x,t) \geq u(x,t) \qquad \text{for all } \, x \in U, \,\, t \leq t_0 \quad \text{and } \quad \varphi(x_0,t_0) = u(x_0,t_0).
\end{equation}

\begin{dfn} \label{def:visco}
A lower semicontinuous  function $u :\Omega \to  \R$ is a viscosity supersolution of \eqref{maineq}  in $\Omega = U \!\times\! (T_1, T_2]$ when the following holds: if $\varphi \in C^2(\Omega)$ touches $u$ from below at $(x_0,t_0)\in \Omega$  as defined in \eqref{eq:below} then
\[
\p_t \varphi(x_0,t_0) -   F(D^2 \varphi, D \varphi, x_0, t_0)  \geq 0.
\]

An upper semicontinuous  function $u :\Omega \to  \R$ is a viscosity subsolution of \eqref{maineq} in $\Omega$ when the following holds: if $\varphi \in C^2(\Omega)$ touches $u$ from above at $(x_0,t_0)\in \Omega$  as defined in \eqref{eq:above} then
\[
\p_t \varphi(x_0,t_0) -   F(D^2 \varphi, D \varphi, x_0, t_0)  \leq 0.
\]
Finally $u \in C(\Omega)$ is a viscosity solution of  \eqref{maineq} if it is both viscosity supersolution and subsolution.  
\end{dfn}
We note that instead of assuming \eqref{eq:below} holds globally  in  $\Omega$, we could merely assume that it does it only in a neighborhood of the point $(x_0,t_0)$. We often write merely supersolution instead of viscosity supersolution in order to shorten the terminology. 

We recall the definition of Pucci's extremal operators (for more detail see \cite{cc}). For an $n\times n$ symmetric matrix $M$, the Pucci's extremal operators with ellipticity constants $0<\lambda\leq\Lambda$ are defined as
\begin{equation} \label{pucci}
\begin{split}
    \mathcal{P}_{\lambda,\Lambda}^-(M)=\lambda\sum_{e_i>0}e_i+\Lambda\sum_{e_i<0}e_i \qquad \text{and}   \qquad    \mathcal{P}_{\lambda,\Lambda}^+(M)=\Lambda\sum_{e_i>0}e_i+\lambda\sum_{e_i<0}e_i\notag,
\end{split}
\end{equation}
where $e_i$'s are the eigenvalues of $M$.

Following the ideas of Caffarelli  \cite{Ca}  we  replace the equation \eqref{maineq} by two extremal  inequalities which takes into account the   ellipticity assumption. In other words,  instead of assuming that   $u \in C(U_T)$ is a viscosity solution of \eqref{maineq}, we use the fact that the operator is uniformly elliptic  and  assume merely that $u$ is a viscosity  supersolution of
\begin{equation}\label{eq1}
    \p_t u - |Du|^{p-2}\mathcal{P}_{\lambda,\Lambda}^-(D^2u) \geq 0
\end{equation}
and  a viscosity subsolution of 
\begin{equation}\label{eq2}
 \p_t u -   |Du|^{p-2}\mathcal{P}_{\lambda,\Lambda}^+(D^2u)\leq 0,
\end{equation}
where $\mathcal{P}_{\lambda,\Lambda}^{\pm}$ are the extremal Pucci operators defined above.  By relaxing the equation \eqref{maineq} with the inequalities \eqref{eq1} and \eqref{eq2}  we simplify the structure of the equation by eliminating the dependence on $x$ and $t$.

%

The equations \eqref{eq1} and \eqref{eq2} are nonhomogeneous and we have to take this into account when  scaling the solution. 

\begin{rmrk}\label{rem:scaling}
If $u$ is a viscosity supersolution of  \eqref{eq1} (or viscosity subsolution of \eqref{eq2}) then for $M,r>0$ the function
\begin{equation} \label{eq:scaling}
v(x,t)=\frac{u(rx,r^pM^{-(p-2)}t)}{M}
\end{equation}
is also a viscosity supersolution of  \eqref{eq1} (or viscosity subsolution of \eqref{eq2}) where it is defined. 
\end{rmrk}

\subsection{Example} \label{subsec:ex}

In this short subsection we  construct a simple example which shows that, unlike in the linear case, we may not assume $\theta_1 = \theta_2$ in Theorem \ref{thm:harnack}. Recall that $\theta_2$ is the waiting time in the future, and by recalling the Barenblatt solution \eqref{eq:barenblatt}, we immediately conclude that  $\theta_2$ is bounded from below, i.e., we may not assume it to be arbitrarily small. This is in contrast to the linear case, where one may decrease the waiting time by enlarging the constant $C$. 

Let us then show that by enlarging the ellipticity ratio $\frac{\Lambda}{\lambda}$,  we may decrease the waiting time $\theta_1$ to be arbitrarily small. The example is  one-dimensional in space, and to that aim we fix large $C_0 >1$ and define 
\[
\alpha(t)= (1+C_0t)^{-\tfrac{1}{p-2}}.
\]
Clearly for $t>-\tfrac{1}{C_0}$ it holds   $\alpha'(t) = -\frac{C_0}{p-2}\alpha(t)^{p-1}$. Moreover, it is easy to see that the function $v:\R \to \R$,  $v(x) = |x|^{\tfrac{p}{p-1}}$, satisfies $|v'(x)|^{p-2} v''(x)= \frac{p^{p-1}}{(p-1)^p}$ for $x \neq 0$, where   $v'$ denotes the spatial derivative.
Let $t_k = -\tfrac{1}{C_0} +\frac{1}{k}$ and assume that $k$ is  large enough so that $t_k <0$. Define a sequence of functions $u_k: (-1,1) \times \R \to \R$
as
\begin{equation} \label{eq:example-1}
u_k(x,t)= 
\begin{cases}
\alpha(t)(1- |x|^{\tfrac{p}{p-1}}), &t >t_k,\\
\alpha'(t_k)(t-t_k)+\alpha(t_k)(1-|x|^{\tfrac{p}{p-1}}), \ \ &t \le t_k.\end{cases}
\end{equation}
Then $u_k$ are continuous,  nonnegative, $u_k(0,0)= 1$  and $\lim_{k \to \infty} u_k(0,-\tfrac{1}{C_0}) = \infty$.   By straightforward computation one may verify that for all $k$ the function $u_k$  is a viscosity solution\footnote{Indeed, the function \eqref{eq:example-1} satisfies the equation \eqref{eq:example-2} pointwise for $(x,t)$ with $x \neq 0$ and $t \neq t_k$, and since $x \mapsto - |x|^{\tfrac{p}{p-1}}$ is not twice differentiable at $x = 0$, there are no  $C^2$-regular test function which touches $u_k$ from below at  points $(0,t)$.} of the equation
\begin{equation} \label{eq:example-2}
\partial_t u - a(x,t) |u'|^{p-2} u''= 0,  
\end{equation}
where
\[
a(x,t) = \begin{cases}
\frac{(p-1)^p}{(p-2)p^{p-1}} \big(1- |x|^{\tfrac{p}{p-1}}\big) C_0, &t >t_k,\\
\frac{(p-1)^p}{(p-2)p^{p-1}} C_0, \ \ &t \le t_k.\end{cases}
\]
 Note that the coefficient $a(\cdot, \cdot)$  is bounded from below and above in $(-\tfrac12, \tfrac12)\times \R$.  Hence, we conclude that if we apply  Theorem \ref{thm:harnack} to the equation \eqref{eq:example-2} after translating and with $\rho = \tfrac18$, then   it holds $\theta_1 \leq \tfrac{8^p}{C_0}$, which converges to zero as $C_0 \to \infty$. This example shows that the solutions of the equation \eqref{maineq} may have finite time blow up. For the divergence form equations  this phenomenon  is discussed in \cite{KuThesis}.

We conclude with the remark that the function defined in  \eqref{eq:example-2} is also  a weak solution of the equation 
\[
\partial_t u - \frac{d}{dx}\left( a_0(x,t) |u'|^{p-2} u'\right)= 0, 
\]
where 
\[
a_0(x,t) = \begin{cases}
\frac{(p-1)^{p-1}}{(p-2)p^{p-1}} \big(1-\frac{p-1}{2p-1}|x|^{\tfrac{p}{p-1}} \big)C_0  , &t >t_k,\\
\frac{(p-1)^p}{(p-2)p^{p-1}} C_0, \ \ &t \le t_k.\end{cases}
\]

\section{Basic measure estimate}

The goal of this section is to prove  the basic measure estimate stated in Lemma \ref{lem:ABP}.  We do this by adapting the idea of sliding paraboloids and direct use of area formula  due to Savin \cite{Sa} to our setting. The original argument in   \cite{Sa} is in the  elliptic setting and it has turned out to be useful when dealing with degenerate equations as observed by Imbert and Silvestre \cite{IS1}. In \cite{IS1}, the arguments are based on sliding cusps. Later, Mooney \cite{M} proved the same result using sliding paraboloids.  The parabolic counterpart to \cite{Sa}  is proven by Wang \cite{Wa}.  Since in our case the second order operator in \eqref{maineq} is degenerate,  we need a slight modification of the original argument and replace the sliding paraboloids with test functions, which are better suited to the equation \eqref{maineq}.  In this section we also include  the singular case $p \in (1,2)$, but we stress that this is the only section where we consider the general case $p>1$.

We begin by defining  the family  which we use to replace the sliding paraboloids.  For parameters $(y,s) \in \R^n\times \R = \R^{n+1}$  we define the function  
\begin{equation} \label{def:contactfunction}
\varphi_{(y,s)}(x,t)=-a^{\frac{1}{p-1}} \big(\tfrac{p-1}{p}\big)|x-y|^{\frac{p}{p-1}}+a(t-s),
\end{equation}
where $a>0$ is a fixed constant.  For a given set of parameters $E  \subset \R^{n+1}$ we define the set of \emph{contact points}  in $\Omega = U \!\times\! (T_1, T_2]$  as 
\begin{equation} \label{def:contactset}
\Gamma(E)=\{(x,t) \in \Omega : \; \text{there exists $(y,s) \in E$ such that   $\varphi_{(y,s)}$ touches   $u$ from below at $(x,t)$}\}.
\end{equation}
The idea for considering  functions defined in  \eqref{def:contactfunction}, is  that  for them $|D \varphi_{(y,s)}|^{p-2}\mathcal{P}_{\lambda,\Lambda}^-(D^2 \varphi_{(y,s)})$ is constant.

Given a viscosity supersolution $u$ of \eqref{eq1} in $\Omega = U\!\times\! (T_1,T_2]$ we approximate it by the inf-convolution, i.e., for $(x,t) \in \Omega$ and $\eps>0$ we define $u_\eps : \Omega \to \R$ as
\[
u_{\eps}(x,t) := \inf_{(y,s) \in \Omega} \left( u(y,s) + \frac{1}{2\eps}(|x-y|^2+ (t-s)^2) \right).
\]
It is well known that $u_\eps$ is still a viscosity supersolution of \eqref{eq1} in any  open set  $\Omega'  \subset \!\subset  U \!\times\! (T_1, T_2)$ when $\eps$ is small, and  that $u_\eps$ converges uniformly  to $u$ in $\Omega'$ as  $\eps \to 0$. The advantage is that $u_\eps$ is semiconcave and thus almost everywhere twice differentiable.  We may thus assume in the technical lemmas below that the supersolution $u$ is semiconcave. In particular,   then $u$ satisfies 
\begin{equation}\label{eq:equation-point}
    \p_t u(x,t) - |Du(x,t)|^{p-2}\mathcal{P}_{\lambda,\Lambda}^-(D^2u(x,t)) \geq 0. 
\end{equation}
almost every $(x,t) \in \Omega'$. Moreover,   at the contact points \eqref{def:contactset} u  is differentiable in space.

The differentiability in space at the contact points  $(x,t) \in \Gamma(E)$ implies that  there is a unique  $(y,s) \in E$ such that $\varphi_{(y,s)}$ touches   $u$ from below at $(x,t)$. Because of this, we may  define a map $\Phi :\Gamma(E) \to E$ as 
\[
\Phi(x,t) = (y,s).
\]
We may  write the map $\Phi$ explicitly by using the contact condition, which in particular, implies $D\varphi_{(y,s)}(x,t) = D u(x,t)$ and we write it as  
\begin{equation} \label{eq:contactcond1}
Du(x,t) = D\varphi_{(y,s)}(x,t)=-a^{\frac{1}{p-1}} |x-y|^{\frac{p}{p-1}-2}(x-y).
\end{equation}
In particular, it holds $|Du(x,t)| = a^{\frac{1}{p-1}} |x-y|^{\frac{1}{p-1}}$. Therefore we deduce that at the contact points it holds
\begin{equation} \label{def:contactcond}
\begin{cases}
y &= x + a^{-1}|Du|^{p-2}Du\\
s &= t-a^{-1}u(x,t) -\frac{p-1}{p} a^{\frac{2-p}{p-1}}|x-y|^{\frac{p}{p-1}}=t-a^{-1}u(x,t) -\frac{p-1}{p}a^{-2}|Du|^p.
\end{cases}
\end{equation}
 
\begin{lemma} \label{lem:ABP-1}
Assume $p >1$ and let $u$ be a nonnegative viscosity supersolution of \eqref{eq1} in $\Omega = U \!\times\! (T_1, T_2]$. Let  $E  \subset \R^{n+1}$  be such that for every $(y,s) \in E$ there is a contact point $(x,t) \in \Gamma(E) $  and  $\Gamma(E)  \subset \!\subset  U \!\times\! (T_1, T_2)$, where the contact set $\Gamma(E)$ is defined in  \eqref{def:contactset}. Then it holds 
\[
|\Gamma(E)| \geq c \, |E|, 
\]
for a universal constant $c>0$.
\end{lemma}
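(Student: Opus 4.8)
The plan is to bound $|E|$ by the integral over $\Gamma(E)$ of the Jacobian of the contact map $\Phi$, via the area formula, and then to bound that Jacobian pointwise by a universal constant using the supersolution inequality \eqref{eq:equation-point}. First I would reduce to the case where $u$ is semiconcave (hence twice differentiable a.e.) by the inf-convolution approximation described above; one applies the lemma to $u_\eps$ on a slightly smaller cylinder and passes to the limit, noting that contact sets behave well under uniform convergence. Then, since $\Phi : \Gamma(E) \to E$ is onto by hypothesis, the area formula gives
\[
|E| \leq \int_{\Gamma(E)} |\det D_{x,t}\Phi(x,t)| \, dx \, dt,
\]
provided $\Phi$ is Lipschitz, or at least differentiable a.e.\ on $\Gamma(E)$ with the area formula applicable; here one uses that $\Phi$ is defined through \eqref{def:contactcond} in terms of $u$, $Du$, and that at contact points $u$ is twice differentiable a.e., so $\Phi$ is differentiable a.e.\ on $\Gamma(E)$ with a controlled derivative.

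Next I would compute $D_{x,t}\Phi$ at a contact point where $u$ is twice differentiable. Differentiating the expressions
\[
y = x + a^{-1}|Du|^{p-2}Du, \qquad s = t - a^{-1} u - \tfrac{p-1}{p} a^{-2}|Du|^p
\]
from \eqref{def:contactcond}, the spatial block $\partial y/\partial x$ is $I + a^{-1} D(|Du|^{p-2}Du)$, which equals $I + a^{-1}|Du|^{p-2}\big(D^2u + (p-2)\frac{Du}{|Du|}\otimes\frac{Du}{|Du|}D^2u\big)$, i.e.\ $I$ plus $a^{-1}$ times a matrix whose eigenvalues lie between (a constant times) the smallest and largest eigenvalues of $|Du|^{p-2}D^2u$; crucially, since $\varphi_{(y,s)}$ touches $u$ from below, we have $D^2u(x,t) \geq D^2\varphi_{(y,s)}(x,t)$, which pins down a lower bound on $D^2u$ and shows $\partial y/\partial x$ is a nonnegative matrix. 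The time derivatives $\partial y/\partial t$, $\partial s/\partial x$, $\partial s/\partial t$ are computed similarly; the block-triangular-ish structure (or a direct determinant expansion) lets one reduce $\det D_{x,t}\Phi$ to $\det(\partial y/\partial x)$ times a factor controlled by $\partial s/\partial t = 1 - a^{-1}\partial_t u$ minus cross terms. The key estimate is then: by the arithmetic–geometric mean inequality, $\det(\partial y/\partial x) \leq \big(\tfrac1n \operatorname{Tr}(\partial y/\partial x)\big)^n = \big(1 + (na)^{-1}\operatorname{Tr}(\text{that matrix})\big)^n$, and $\operatorname{Tr}$ of the bracketed matrix is comparable to $|Du|^{p-2}$ times a quantity bounded by $\tfrac{1}{\lambda}\mathcal P^-_{\lambda,\Lambda}(D^2u)$ (from above) — which by \eqref{eq:equation-point} is at most $\tfrac{1}{\lambda}\partial_t u$. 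Combining this with the factor coming from the time variables, all the $\partial_t u$ contributions should assemble into an expression of the form $C(1 + a^{-1}\partial_t u)^{n+1}$ or similar, and then one uses the pointwise supersolution inequality together with the elementary bound to conclude $|\det D_{x,t}\Phi| \leq C$ for a universal $C$ depending only on $n,p,\lambda,\Lambda$ (the constant $a$ cancels because of the scaling built into $\varphi_{(y,s)}$). Hence $|E| \leq C|\Gamma(E)|$, giving the lemma with $c = 1/C$.

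The main obstacle I anticipate is the careful bookkeeping of the Jacobian determinant of $\Phi$ as a map on $\R^{n+1}$ and showing that the degenerate weight $|Du|^{p-2}$ and the free constant $a$ enter in exactly the right way so that the supersolution inequality closes the estimate with a genuinely universal constant. In particular one must handle the locus $\{Du = 0\}$ on $\Gamma(E)$ (where the weight degenerates or the formulas for $D(|Du|^{p-2}Du)$ are singular): there, either the contact condition \eqref{eq:contactcond1} forces $x = y$ so these contact points contribute a lower-dimensional set mapped into a single fiber, or one argues by the continuity/monotonicity of the construction that such points are negligible; in any case the measure-theoretic argument must be robust to this set. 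A secondary technical point is justifying the area formula for $\Phi$ restricted to the (merely measurable, a.e.\ twice-differentiable) contact set — this is standard once one knows $u_\eps$ is semiconcave, since then $\Phi$ is locally Lipschitz on the contact set of $u_\eps$, and one only needs the inequality direction $|E| \leq \int |\det D\Phi|$, which follows from $\Phi(\Gamma(E)) \supseteq E$.
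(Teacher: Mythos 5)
Your overall strategy is the same as the paper's (inf-convolution to get semiconcavity, the contact map $\Phi$ from \eqref{def:contactcond}, the area formula $|E|\leq \int_{\Gamma(E)}|\det D_{x,t}\Phi|$, and a pointwise bound on the Jacobian via the contact conditions, the supersolution inequality and the arithmetic--geometric mean inequality), but two steps in your sketch do not go through as stated. The first and most serious is the reduction of $\det D_{x,t}\Phi$. The matrix is not block triangular: $\p_t y = a^{-1}|Du|^{p-2}B(Du)\,\p_t Du$ and $D_x s$ are both nonzero, and the off-diagonal entries involve the mixed derivative $\p_t Du$, which is \emph{not} controlled by the equation, by the contact conditions, or by any universal constant (the semiconcavity constant blows up as $\eps\to 0$, so it cannot enter the final estimate). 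Saying the cross terms are ``controlled'' therefore leaves a real hole. The paper's key observation is an exact algebraic cancellation: writing $\det D_{x,t}\Phi=\det M_1+\det M_2$, the last row of $M_2$ is a linear combination of its first $n$ rows (because $-a^{-1}A^TDu=c$ and $-a^{-1}b\cdot Du=d_2$), so $\det M_2=0$ identically and all terms containing $\p_t Du$ drop out, leaving $\det D_{x,t}\Phi=(1-a^{-1}\p_t u)\det\big(I_n+a^{-1}|Du|^{p-2}B(Du)D^2u\big)$. Without identifying this cancellation your pointwise bound $|\det D_{x,t}\Phi|\leq C$ does not follow. (A related, fixable point: the matrix $I_n+a^{-1}|Du|^{p-2}B\,D^2u$ is not symmetric, so before applying AM--GM you must symmetrize, e.g. conjugate by $\sqrt{B}$ as in \eqref{eq:determinant-1}, and only then use the contact condition to show nonnegativity.)

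The second gap is the critical set $\{Du=0\}\cap\Gamma(E)$. Your two suggested outs --- that these points form a lower-dimensional set mapped into a single fiber, or that they are negligible --- are both false in general: at such points $x=y$ but $\Phi(x,t)=(x,\,t-a^{-1}u(x,t))$ is a genuine map whose image $E_0$ can have positive measure. The paper treats this as a separate case: on $\Gamma_0=\{Du=0\}$ the map $\Phi$ is Lipschitz (by semiconcavity), the first contact condition gives $\p_t u\leq a$, and the supersolution inequality with $Du=0$ gives $\p_t u\geq 0$, so $|\det D_{x,t}\Phi|\leq|1-a^{-1}\p_t u|\leq 1$ and $|E_0|\leq|\Gamma_0|$; if $|E_0|\geq\tfrac12|E|$ one is done, and otherwise one works on the sets $\Gamma_j=\{|x-y|>1/j\}$, where $|Du|\geq c(j)>0$ guarantees that $\varphi_{(y,s)}$ is smooth near the contact point and hence that $\Phi$ is Lipschitz on $\Gamma_j$ (this splitting is also what justifies the area formula, which you left somewhat open, since for $p>2$ the test function is not $C^2$ at its vertex). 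With these two ingredients added --- the rank-deficiency argument killing the $\p_t Du$ terms and the case analysis $\Gamma_0$ versus $\Gamma_j$ --- your outline matches the paper's proof.
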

\begin{proof}
By the above discussion we may assume that  $u$ is semiconcave and thus  twice differentiable almost everywhere in $\Omega'   \subset \!\subset   U \!\times\! (T_1, T_2)$ and $\Omega'$ contains $\Gamma(E)$. 
The fact that $\varphi_{(y,s)}$ touches $u$ from below at $(x,t)$ implies the equality \eqref{eq:contactcond1} and gives information for the second derivative in space and for the derivative in time a.e. in $\Gamma(E)$. However, we need to be careful as the function $\varphi_{(y,s)}$ is not twice differentiable at $x=y$ when $p>2$, which is the critical point (in space), that is $D\varphi_{(y,s)}(y,t) = 0$.  However, if $Du(x,t) \neq 0$ and  $u$ is twice differentiable at $(x,t)$ then we deduce from the contact condition that 
\begin{equation} \label{eq:contactcond2}
\begin{split}
&\p_t u(x,t) \leq \p_t \varphi_{(y,s)}(x,t)=a\\
&D^2u(x,t) \geq D^2 \varphi_{(y,s)}(x,t) = -a^{\frac{1}{p-1}} |x-y|^{\frac{2-p}{p-1}}\left( I_n - \frac{p-2}{p-1} \frac{x-y}{|x-y|}\otimes\frac{x-y}{|x-y|} \right) ,\\
\end{split}
\end{equation}
where $\otimes$ stands for the tensor product and $I_n$ for the identity matrix. As \eqref{eq:contactcond2} holds only outside the critical points, we need  to deal the critical  points $Du(x,t) = 0$ separately. Recall that if a  contact point is a critical point  $Du(x,t) = 0$, then by \eqref{eq:contactcond1} it holds $x= y$.
We  thus define the sets
\begin{equation} \label{eq:def-Aj}
\Gamma_j=\{(x,t) \in \Gamma(E): |x-y|>1/j\}
\end{equation}
for $j \geq 1$ and let $E_j\subset E$ be the associated set of parameters with $\Phi(\Gamma_j)=E_j$, and 
\[
\Gamma_0=\{(x,t) \in \Gamma(E): |x-y|=0\}
\]
 with  $E_0 \subset E$  is  such that $\Phi(\Gamma_0) = E_0$. Notice that  $E=\cup_{j=0}^{\infty} E_j$. We divide the proof  in two cases.\\
\textbf{Case (i)}: $|E_0|\geq \frac{|E|}{2}.$

This is the easy case. Recall that by definition it holds  $E_0 =\Phi(\Gamma_0)$.  Since $\Gamma_0$  is the set of critical points, $Du(x,t) = 0$,  then by \eqref{def:contactcond} it holds  $\Phi(x,t)=(x,t-a^{-1}u(x,t))$ in $\Gamma_0$. It follows from the semiconcavity that $u$ is Lipschitz continuous and therefore $\Phi$ is Lipschitz in $\Gamma_0$. We may thus use the  area formula to deduce 
\[
|E_0|=|\Phi(\Gamma_0)| \leq \int_{\Gamma_0} |\det D_{x,t}\Phi| \leq \int_{\Gamma_0} |(1-a^{-1}\p_t u)|.
\]  
By the first contact condition \eqref{eq:contactcond2} $1-a^{-1}\p_t u \ge 0.$ Also, as $u$ is a supersolution of \eqref{eq1} and $Du(x,t) =0$ we deduce that  $-\p_t u \leq 0$ almost everywhere in $\Gamma_0$.  Therefore we have 
\[
\frac{|E|}{2} \leq |E_0| \leq |\Gamma_0| \leq |\Gamma(E)|.
\]
Thus we obtain the result in the  Case (i).

\textbf{Case(ii)}: $|\cup_{j=1}^{\infty} E_j|>\frac{|E|}{2}.$

We notice immediately that by the definition of $\Gamma_j$  in \eqref{eq:def-Aj} and by  $\Phi(\Gamma_j) = E_j$ it holds  $E_j \subset E_{j+1}$. Therefore by the monotone convergence theorem $\lim_{j \to \infty} |E_j| > \frac{|E|}{2}$ and we may choose $j$ for which $|E_j|>\frac{|E|}{2}$. The advantage is that in the associated contact set $\Gamma_j$ it holds $|Du(x,t)|\geq c(j) > 0$. Since the argument is long, we divide it in three steps. In the first step we show that the map $\Phi$ is Lipschitz in $\Gamma_j$ and in the second step we calculate the determinant of its Jacobian. In the third step we conclude the proof.  

\textbf{Step 1}: 

In order to use the area formula we  show that   $\Phi$ is Lipschitz  in $\Gamma_j$.  Let $(\hat x, \hat t) \in \Gamma_j$. Then there exists $(y,s) \in E_j$ such that  $\varphi_{(y,s)}$, defined in \eqref{def:contactfunction}, touches $u$ from below at $(\hat x,\hat t)$. Recall that since $(y,s) \in E_j$ then $|D\varphi_{(y,s)}(\hat x, \hat t)|\geq c(j)>0$, which means that $\varphi_{(y,s)}$ is smooth near $(\hat x , \hat t)$. Let us write $\varphi_{(y,s)} = \varphi$ for simplicity. By Taylor expansion it holds  for $t \leq \hat t$ and $x\in U$ 
\[
u(x,t) \geq \varphi(x,t ) \geq \varphi(\hat x,  \hat t) + \p_t \varphi(\hat x, \hat t)(t-\hat t)  +  D \varphi(\hat x, \hat t) \cdot (x- \hat x) - C(j)\big( |x- \hat x|^2 + (t-\hat t)^2\big).
\]
By the contact  condition it  holds $u(\hat x, \hat t) = \varphi(\hat x,\hat t)$, $Du(\hat x, \hat t) = D\varphi(\hat x, \hat t)$ and by the definition of  $\varphi_{(y,s)}$ in \eqref{def:contactfunction} $\p_t \varphi(\hat x, \hat t) = a$. Thus we have by the smoothness of $\varphi$
\begin{equation}\label{eq:Lipschitz-1}
u(x,t) - u(\hat x,\hat t)-   Du(\hat x, \hat t) \cdot (x- \hat x) - a(t- \hat t) \geq -C(j) \big( |x-\hat x|^2 + (t- \hat t)^2\big)
\end{equation}
for  $t\leq \hat t$.  On the other hand  the semiconcavity implies that there is $C>1$ such that the function $(x,t) \mapsto u(x,t) - C(|x|^2 +t^2)$ is concave and thus at the point of differentiability it holds
\begin{equation}\label{eq:Lipschitz-2}
u(x,t) - u(\hat x, \hat t)-Du(\hat x, \hat t) \cdot (x-\hat x) -  a(t-\hat t)    \leq C(|x-\hat x|^2+|t- \hat t|^2)
\end{equation}
for all $(x,t)$. In particular we have 
\begin{equation}\label{eq:Lipschitz-3}
|u(x,\hat t) - u(\hat x, \hat t)-Du(\hat x, \hat t) \cdot (x-\hat x)  | \leq C(j)|x- \hat x|^2
\end{equation}
for all $x \in U$.

Let us next fix $(x_1,t_1), (x_2,t_2) \in \Gamma_j$ with  $t_1 \leq t_2$  and prove
\begin{equation}\label{eq:Lipschitz-4}
| Du(x_2,t_2)-Du(x_1,t_1)| \leq C(j)(|x_2-x_1| + t_2-t_1).
\end{equation}
We denote $r = \sqrt{|x_2-x_1|^2 + (t_2-t_1)^2}$ and  fix $z \in \partial B_r(x_2)$, which choice will be clear later in the proof. We use \eqref{eq:Lipschitz-3} first with the choice $(\hat x, \hat t) =(x_1,t_1) $ and $(x,\hat t) =(x_2,t_1)$ and get
\[
u(x_2,t_1) - u(x_1,t_1)-Du(x_1,t_1) \cdot (x_2-x_1)   \leq C(j)r^2,
\]
 and then with the choice $(\hat x, \hat t) =(x_1,t_1) $ and $(x,t) =(z,t_1)$ and get 
\[
u(x_1,t_1) - u(z,t_1)-Du(x_1,t_1) \cdot (x_1-z)   \leq C(j)r^2.
\]
Summing the estimates  yields
\begin{equation}\label{eq:Lipschitz-5}
u(x_2,t_1) - u(z,t_1)-Du(x_1,t_1) \cdot (x_2-z)   \leq C(j)r^2.
\end{equation}
We then use \eqref{eq:Lipschitz-2} with $(\hat x, \hat t)= (x_2,t_2)$ and $(x,t)= (z,t_1)$ and have 
\[
u(z,t_1) - u(x_2,t_2)-Du(x_2,t_2) \cdot (z-x_2)   - a(t_1-t_2) \leq C(j)r^2.
\]
Finally we use \eqref{eq:Lipschitz-1} with $(\hat x, \hat t)= (x_2,t_2)$ and $(x,t)= (x_2,t_1)$ and have
\[
u(x_2,t_2) - u(x_2,t_1)   - a(t_2-t_1) \leq  C(j)r^2.
\]
Summing the estimates yields
\begin{equation}\label{eq:Lipschitz-6}
u(z,t_1)  - u(x_2,t_1) -Du(x_2,t_2) \cdot (z-x_2)    \leq C(j)r^2.
\end{equation}
We combine  \eqref{eq:Lipschitz-5} with \eqref{eq:Lipschitz-6} and deduce 
\[
\big(Du(x_2,t_2)  -Du(x_1,t_1)\big) \cdot (x_2-z) \leq C(j)r^2.
\]
By choosing $z =x_2-r\frac{Du(x_2,t_2)  -Du(x_1,t_1)}{|Du(x_2,t_2)  -Du(x_1,t_1)|}$ we obtain \eqref{eq:Lipschitz-4} and  deduce that  $\Phi$ is Lipschitz continuous in $\Gamma_j$.

\textbf{Step 2}:

We obtain by  area formula  
\[
|E_j| \le \int_{\Gamma_j} |\det D_{x,t}\Phi|.
\]
Let us next  calculate $\det D_{x,t}\Phi$.  We first  find $D_{x,t} \Phi(x,t)$  by differentiating \eqref{def:contactcond} and to that aim for every $\xi \neq 0$ we define a positive definite symmetric matrix
\begin{equation}\label{def:field-B}
B(\xi) = I_n + (p-2) \frac{\xi}{|\xi|}\otimes \frac{\xi}{|\xi|}.
\end{equation}
Differentiating \eqref{def:contactcond}  gives  for almost all  $(x,t) \in \Gamma_j$
\[
\begin{split}
&D_x y= I_n + a^{-1} |Du|^{p-2}\,  B(Du)D^2 u\\
&\p_t y= a^{-1} |Du|^{p-2}\big(B(Du)\, \p_t Du\big)\\
&D_x s= -a^{-1} Du  - a^{-2} (p-1) |Du|^{p-2} \big(D^2u Du\big) \\
&\p_t s=1-a^{-1}\p_t u -a^{-2} (p-1) |Du|^{p-2} \big(\p_t Du \cdot  Du\big).
\end{split}
\]
 We write the differential of $\Phi$ by the above calculations as
\[
D_{x,t}\Phi =\begin{pmatrix} A & b \\ c^{T} & d_1+d_2 \end{pmatrix},
\]
where $A$ is an  $n\times n$ matrix given by 
\[
A= I_n + a^{-1} |Du|^{p-2}\,  B(Du)D^2 u,
\]
 $b$ and $c$ are vectors given by 
\[
b=  a^{-1} |Du|^{p-2}\big(B(Du)\, \p_t Du\big) \quad \text{and} \quad  c=  -a^{-1} Du  - a^{-2} (p-1) |Du|^{p-2} \big(D^2u Du\big),
\]
 and the numbers $d_1, d_2$ are
\[
 d_1 = 1-a^{-1}\p_t u  \quad \text{and} \quad  d_2 =  -a^{-2} (p-1) |Du|^{p-2} \big(\p_t Du \cdot  Du\big).
\]
 By elementary properties of the determinant we have
\[
\det D_{x,t}\Phi = \det \begin{pmatrix} A & 0 \\ 0 & d_1  \end{pmatrix} +  \det \begin{pmatrix} A & b \\ c^{T} & d_2  \end{pmatrix}= \det M_1 + \det M_2.
\]
We observe that the matrix $M_2 $ above is not full rank as  we obtain its last row as a linear combination of the previous ones. Indeed, by a straightforward  computation we deduce that for $A, b,c$ and $d_2$ from above it holds
\[
-a^{-1} A^T Du = c \qquad \text{and} \qquad -a^{-1} \, b \cdot Du = d_2. 
\]
Therefore if $M_2^j$ denotes the covectors associated with the $j$th row of $M_2$,  then the above  yields
\[
\sum_{j=1}^n a^{-1} \p_{x_j} u \, M_2^j = - M_2^{n+1}.
\]
Hence, $M_2$  is not full rank matrix and $\det M_2 = 0$. Therefore we  obtain 
\begin{equation}\label{eq:determinant-0}
\det D_{x,t}\Phi = \det M_1 = (1-a^{-1}\p_t u(x,t))\det \left(I_n+ a^{-1} |Du|^{p-2}\,  B(Du)D^2 u \right).
\end{equation}

Let us write $B = B(Du)$ defined in \eqref{def:field-B} for short. Since $B$ is symmetric and positive definite we may define its  square root $\sqrt{B}$ and its inverse $\sqrt{B}^{-1}$. In fact,  by direct computation from the definition \eqref{def:field-B} we have
\begin{equation}\label{eq:determinant-root}
\sqrt{B} = \sqrt{B}(\xi) = I_n + q \frac{\xi}{|\xi|} \otimes  \frac{\xi}{|\xi|},
\end{equation}
 where $q >-1$ solves $q^2 + 2q = (p-2)$. Since it holds 
\[
 \det \left(\sqrt{B} \,  \big(\sqrt{B}^{-1}+ a^{-1} |Du|^{p-2}\,  \sqrt{B}\, D^2 u\big) \right) =   \det \left( \big(\sqrt{B}^{-1}+ a^{-1} |Du|^{p-2}\,  \sqrt{B}\, D^2 u\big)\sqrt{B}  \right)\\
\]
we may write the determinant  \eqref{eq:determinant-0}  in a slightly different way as 
\begin{equation}\label{eq:determinant-1}
\det D_{x,t}\Phi =  (1-a^{-1}\p_t u(x,t))\det \left(I_n+ a^{-1} |Du|^{p-2}\,  \sqrt{B}\, D^2 u \, \sqrt{B} \right).
\end{equation}

\textbf{Step 3}:

Let us next show that $\det D_{x,t}\Phi \geq 0$ at the contact points in $\Gamma_j$ where $u$ is twice differentiable. The first contact condition in \eqref{eq:contactcond2} implies $\p_t u \leq a$, which in turn can be written as
\[
1-a^{-1}\p_t u(x,t)) \geq 0.
\]
Let us then show that 
\begin{equation}\label{eq:determinant-2}
I_n+ a^{-1} |Du|^{p-2}\,   \sqrt{B}\, D^2 u \, \sqrt{B} \geq 0,
\end{equation}
where  the matrix $ \sqrt{B} = \sqrt{B}(\xi)$ is defined in  \eqref{eq:determinant-root}. The second contact condition in \eqref{eq:contactcond2} reads as
\[
D^2 u(x,t) \geq   -a^{\frac{1}{p-1}} |x-y|^{\frac{2-p}{p-1}}\left( I_n - \frac{p-2}{p-1} \frac{x-y}{|x-y|}\otimes\frac{x-y}{|x-y|} \right) 
\]
a.e. in $\Gamma_j$. By \eqref{eq:contactcond1} we have $|Du|= \big(a|x-y|\big)^{\frac{1}{p-1}}$ and $\frac{Du}{|Du|} = -\frac{x-y}{|x-y|}$.  Thus we may write the above as 
\begin{equation}\label{eq:ellip1}
 a^{-1} |Du|^{p-2}D^2 u \geq  -I_n + \frac{p-2}{p-1} \frac{Du}{|Du|}\otimes \frac{Du}{|Du|}.
\end{equation}
By recalling the structure  of $\sqrt{B} = \sqrt{B}(Du)$ in  \eqref{eq:determinant-root} we deduce by  direct calculation  that  
\[
\sqrt{B} \left(\frac{Du}{|Du|}\otimes \frac{Du}{|Du|} \right)\sqrt{B}  = (p-1) \, \frac{Du}{|Du|}\otimes \frac{Du}{|Du|}.
\]
Therefore by multiplying   both sides of  \eqref{eq:ellip1} with $\sqrt{B}$ yields
\[
\begin{split}
 a^{-1} |Du|^{p-2} &\big( \sqrt{B}\,   D^2 u \sqrt{B}\,   \big) \\
&\geq -I_n -(p-2) \frac{Du}{|Du|}\otimes \frac{Du}{|Du|} +  \frac{p-2}{p-1}\sqrt{B}\,   \left(\frac{Du}{|Du|}\otimes \frac{Du}{|Du|} \right)\sqrt{B}\,  \\
&= -I_n
\end{split}
\]
and we have \eqref{eq:determinant-2}. 
%

Thus we have shown that $I_n + a^{-1}|Du|^{p-2} \sqrt{B}(Du)\, D^2 u \, \sqrt{B}(Du)$ is symmetric, nonnegative definite matrix. Hence, we have by the arithmetic-geometric  mean inequality
\[
\begin{split}
|E_j| &\leq \int_{\Gamma_j} |\det D_{x,t}\Phi| =  \int_{\Gamma_j} (1-a^{-1}\p_t u(x,t))\det \left(I_n + a^{-1}|Du|^{p-2} \sqrt{B}\, D^2 u \, \sqrt{B} \right)\\
&\leq \frac{1}{(n+1)^{n+1}} \int_{\Gamma_j} \left(n+1 - a^{-1}\p_t u +a^{-1}|Du|^{p-2}\operatorname{Tr}(\sqrt{B}\, D^2 u \, \sqrt{B} ) \right)^{n+1}.
\end{split}
\]
Recall that by \eqref{eq:determinant-2} it holds
\[
a^{-1} |Du|^{p-2}\,  \sqrt{B}\, D^2 u \, \sqrt{B}  \geq - I_n
\]
and that \eqref{eq:contactcond2} implies $\p_t u \leq a$.  Since $u$ is a supersolution of \eqref{eq1}, the inequality  \eqref{eq:equation-point} holds  a.e., which means that  
\[
    |Du|^{p-2}\mathcal{P}_{\lambda,\Lambda}^-(D^2u) \leq  \p_t u .
\]
These together imply
\[
|Du|^{p-2}\,  \sqrt{B}\, D^2 u \, \sqrt{B}   \leq  a \, C \, I_n \quad \text{and} \quad \p_t u \geq - a \, C
\]
for a universal constant $C$. Therefore we finally deduce
\[
\begin{split}
\frac{|E|}{2}< |E_j| &\leq \int_{\Gamma_j} \left(n+1 - a^{-1}\p_t u +a^{-1}|Du|^{p-2}\operatorname{Tr}(\sqrt{B}\, D^2 u \, \sqrt{B} ) \right)^{n+1} \\
&\leq C |\Gamma_j| \leq C|\Gamma(E)|.
\end{split}
\]
This completes the proof.
\end{proof}
\begin{lemma}\label{lem:ABP}
Assume $p >1$.  Let $u$ be a nonnegative viscosity  supersolution of \eqref{eq1} in  $Q_1^- $ with  $u(0,0)\leq 1$. Then there exists $c_0 >0$ such that 
\[
|\{(x,t) \in Q_1^- \, : \, u(x,t) < 4\}| \geq c_0.
\]
\end{lemma}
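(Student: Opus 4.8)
The plan is to deduce the basic measure estimate from Lemma~\ref{lem:ABP-1} by choosing a good family of parameters $E$ and controlling the contact set $\Gamma(E)$ from above by the sublevel set $\{u<4\}$. First I would fix the scaling constant $a$ in the definition \eqref{def:contactfunction} of $\varphi_{(y,s)}$ to be a small universal constant, to be chosen at the end, and take $E$ to be a small cylinder of parameters, say $E = B_{r}(0) \times (s_1,s_2)$ for suitable universal $r>0$ and $s_1<s_2$ chosen so that for each $(y,s)\in E$ the function $\varphi_{(y,s)}$, which tends to $-\infty$ as $|x|\to\infty$ and is very negative in the past, must first touch $u$ from below somewhere inside $Q_1^-$ rather than on its parabolic boundary; here the normalization $u(0,0)\le 1$ together with the nonnegativity of $u$ is used to guarantee that the graph of $\varphi_{(y,s)}$, raised appropriately, actually meets the graph of $u$ and that the first contact occurs at an interior point. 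This gives a set $E$ with $|E|\ge c$ universal such that every $(y,s)\in E$ produces a contact point, so Lemma~\ref{lem:ABP-1} applies (after the inf-convolution reduction, exactly as in that lemma) and yields $|\Gamma(E)| \ge c\,|E| \ge c_0$ for a universal $c_0>0$.

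The second and main point is to show $\Gamma(E) \subset \{(x,t)\in Q_1^- : u(x,t)<4\}$, i.e. that at every contact point the value of $u$ is controlled. At a contact point $(x,t)\in\Gamma(E)$ we have $u(x,t)=\varphi_{(y,s)}(x,t)$ for some $(y,s)\in E$, and since $x\in B_1$ and $y\in B_r$ with $r$ small, $|x-y|$ is bounded, so the spatial term $-a^{\frac{1}{p-1}}\frac{p-1}{p}|x-y|^{\frac{p}{p-1}}$ is bounded in absolute value by a universal constant times $a^{\frac{1}{p-1}}$; the time term is $a(t-s)$ with $t\in(-1,0]$ and $s$ in a bounded window, so it is bounded by a universal constant times $a$. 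Choosing $a$ small enough (universally) makes $\varphi_{(y,s)}(x,t) < 4$, hence $u(x,t)<4$ at every contact point. Combining this inclusion with the lower bound on $|\Gamma(E)|$ gives $|\{u<4\}\cap Q_1^-| \ge |\Gamma(E)| \ge c_0$, which is the claim.

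The step I expect to be the main obstacle is the geometric argument that the contact point lies in the interior $Q_1^-$ and not on the lateral or initial part of the parabolic boundary, uniformly over $(y,s)\in E$: one must arrange the vertical placement of the sliding family so that at the relevant times $\varphi_{(y,s)}$ lies strictly below $u$ on $\partial B_1$ and strictly below $u$ at the bottom time slice, while still being forced to touch $u$ somewhere by the normalization $u(0,0)\le 1$. This is where the choice of $r$, of the time window $(s_1,s_2)$, and of $a$ must be balanced; since $u$ is only known to be nonnegative and lower semicontinuous a priori, one works with the inf-convolution $u_\eps$ on a slightly smaller cylinder (as set up before Lemma~\ref{lem:ABP-1}), proves the estimate there with constants independent of $\eps$, and passes to the limit. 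The remaining ingredients — differentiability of $u$ in space at contact points, the sign $1 - a^{-1}\p_t u \ge 0$, and the Jacobian computation — are all supplied by Lemma~\ref{lem:ABP-1}, so no further work beyond bookkeeping is needed there.
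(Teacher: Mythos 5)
Your overall architecture is the same as the paper's (choose a parameter set $E$ of positive measure, apply Lemma \ref{lem:ABP-1}, and bound $u$ at contact points by the value of $\varphi_{(y,s)}$ there), but the key quantitative calibration is wrong, and it is exactly at the step you yourself flag as the main obstacle. You propose to take $a$ \emph{small} with $s$ in a bounded window. With those choices the argument breaks in one of two ways. If $s$ stays in a window bounded independently of $a$, then $\varphi_{(y,s)}(0,0)\le a(-s)\le Ca$, which for small $a$ is below $u(0,0)$ whenever $u(0,0)$ is close to $1$; nothing forces $\varphi_{(y,s)}$ to meet $u$ at all, so some $(y,s)\in E$ may produce no contact point and the hypothesis of Lemma \ref{lem:ABP-1} fails. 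If instead you ``raise'' the function enough to beat $u(0,0)\le 1$, you must take $a|s|\gtrsim 1$, i.e.\ $|s|\gtrsim a^{-1}$; but then on the bottom slice $t=-1$ and near the lateral boundary $|x|=1$ the function is $\varphi_{(y,s)}(x,t)= a(t-s)-a^{\frac{1}{p-1}}\tfrac{p-1}{p}|x-y|^{\frac{p}{p-1}}\ge 1-O(a)-O(a^{\frac{1}{p-1}})>0$, since the spatial term over the unit ball is only $O(a^{\frac{1}{p-1}})$ when $a$ is small. As $u$ is merely nonnegative there, $\varphi_{(y,s)}$ can lie \emph{above} $u$ on the parabolic boundary, so the touching condition \eqref{def:contactset} (which requires $\varphi_{(y,s)}\le u$ on the whole slab $t\le t_0$) cannot be arranged at an interior point, and $\Gamma(E)\subset\subset Q_1^-$ fails. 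In short: with small $a$ the spatial part of \eqref{def:contactfunction} is too weak to confine the contact, and no choice of $r,s_1,s_2$ repairs this.

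The paper resolves the balance in the opposite direction: it takes $a=16^p$ \emph{large}, $|y|\le 1/16$, and $|s|\asymp a^{-1}$ (precisely $-4\cdot16^{-p}\le s\le -2\cdot 16^{-p}$). Then (i) $\varphi_{(y,s)}(0,0)\ge 2-\tfrac{p-1}{p}>1\ge u(0,0)$ forces a contact; (ii) for $|x-y|\ge \tfrac12$ the spatial term is of size $(a2^{-p})^{\frac{1}{p-1}}\tfrac{p-1}{p}$, which dominates the maximal time gain $a|s|\le 4$, so contacts satisfy $|x-y|<\tfrac12$ and $t\ge s>-1$, giving $\Gamma(E)\subset\subset B_1\times(-1,0)$; and (iii) at a contact point $u=\varphi_{(y,s)}\le a(t-s)\le a|s|\le 4$, which is where the constant $4$ in the statement comes from — not from making $a$ small. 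Your closing bookkeeping (inf-convolution, $1-a^{-1}\p_t u\ge 0$, the Jacobian estimate) is indeed all contained in Lemma \ref{lem:ABP-1}, but as written your construction of $E$ does not satisfy that lemma's hypotheses, so the proof has a genuine gap until the parameters are recalibrated as above.
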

\begin{proof}
 We define the set of parameters $E \subset \R^{n+1}$ such that  $(y,s) \in E$ if 
\begin{equation} \label{eq:choice:E}
|y| \leq 1/16 \qquad \text{and} \qquad -4\cdot16^{-p} \leq s \leq -2 \cdot 16^{-p}.
\end{equation}

Let $\varphi_{(y,s)}$ be a function as defined in \eqref{def:contactfunction}, i.e., 
\[
\varphi_{(y,s)}(x,t) = -a^{\frac{1}{p-1}} \big(\tfrac{p-1}{p} \big)|x-y|^{\frac{p}{p-1}}+a(t-s),
\]
where we choose 
\[
a =  16^{p}.
\]
With these choices  it holds for  $(y,s) \in E$   that 
\[
\varphi_{(y,s)}(0,0) \geq -   \frac{p-1}{p} + 2 = 1 + \frac{1}{p}  > u(0,0).
\]
On the other hand, when $(x,t) \in Q_1^-$ is such that $|x-y| \geq 1/2$ and $t <0$ then it holds 
\[
\varphi_{(y,s)}(x,t) \leq -8^{\frac{p}{p-1}} \frac{p-1}{p} + 4 < 0 . 
\]
Also when $t < s$, then  it clearly holds $\varphi_{(y,s)}(x,t) < 0$.  Therefore we conclude that for every $(y,s)$ as in \eqref{eq:choice:E}, there is a contact point $(x,t)$, i.e.,   $\varphi_{(y,s)}$ touches $u$ from below at a point $(x,t)$, which satisfies $|x-y| < 1/2$  and $t \geq s$.

 Let $\Gamma(E)$ be the set of contact points, to be more precise 
\[
\Gamma(E)=\{(x,t) \in Q_1^- : \; \text{there exists $(y,s) \in E$ such that   $\varphi_{(y,s)}$ touches   $u$ from below at $(x,t)$}\}.
\]
Then we deduce by the above discussion and by the choice of $y$ in \eqref{eq:choice:E} that if $(x,t) \in \Gamma(E)$ then 
\[
|x| \leq |x-y|+ |y| < \frac12 + \frac{1}{16}  < 1
\]
and  
\[
0> t \geq s \geq - 4 \cdot 16^{-p} > -1.
\]
Then since $\varphi_{(y,s)}(0,0) \geq 1 + \frac{1}{p}$ we have by compactness that $\Gamma(E)  \subset \!\subset B_1 \! \times \!(-1,0)$.  We  deduce by Lemma \ref{lem:ABP-1} that 
\[
|\Gamma(E)| \geq c|E| \geq c_0. 
\]
Note that  at every  contact point $(x,t) \in \Gamma(E)$ it holds 
\[
u(x,t)= \varphi_{(y,s)}(x,t)  < - a s \le 4 .
\]
This completes the proof of the lemma.
\end{proof}

\section{Propagation of boundedness}
In the previous section we proved that if \( u \) is small at one point, then \( u \) is  small on a set of positive measure.  However, this information is given in terms of measure and is  not strong enough to give information on the behavior of $u$ at a specific time level. In this section we prove in Lemma \ref{lem:propagation-space-2} that if $u$ is small at one point, then the sublevel sets $\{ u <s \}$ become increasingly dense in space at every time level.  

To this aim we construct a global barrier function, which mimics the behavior of the Barenblatt solution \eqref{eq:barenblatt}.  Let $q>1$ and consider a smooth decreasing function $g : \R \to [0,\infty)$ such that  
\begin{equation} 
\label{def:func-g}
g(s) = s^{-q}-1 \quad \text{for } \, s \ge \tfrac12  \quad \text{and} \quad g(s) =  2^q \quad \text{for } \, s \in [0,\tfrac{1}{4}]. 
\end{equation}
For $\alpha \in (0,1)$ we consider the function $\psi : \R^n \times (0,\infty) \to \R$ 
\begin{equation} 
\label{eq:barrier}
\psi(x,t) : =  \alpha^{\frac{1}{p-2}}\, t^{-\beta}g\left(\frac{2|x|}{3t^\alpha}\right), \qquad \text{for } \, \beta = \frac{1- \alpha p}{p-2}. 
\end{equation}

We point out that $\psi$ is positive when  $|x| < \tfrac{3}{2} t^\alpha$, and this set is expanding in time.  Note also that the function $\psi$ depends on two parameters $q>1$ and $\alpha \in (0,1)$. We show  that 
$\psi$ is a strict subsolution in its support whenever $\alpha$ is small enough (see the precise formulation in Lemma \ref{lem:barenblatt} below).   We see from the example in Section \ref{subsec:ex} that $\alpha$ needs to be small for $\psi$ to be a viscosity subsolution of \eqref{maineq}.

\begin{lemma}\label{lem:barenblatt}
There exist universal constants $q_0>1$ and $\alpha_0 = \alpha_0(q_0)\in (0,1)$ such that for  $q = q_0$ and for all  $0 < \alpha <\alpha_0$, the function \( \psi \) defined in \eqref{eq:barrier} satisfies   
\[
\partial_t \psi (x,t) - |D \psi|^{p-2} \mathcal{P}^-_{\lambda,\Lambda}(D^2 \psi)(x,t) <0,
\]
for all $(x,t)$ with $|x| < \tfrac{3}{2} t^\alpha.$
\end{lemma}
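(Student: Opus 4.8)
The plan is to compute $\partial_t\psi$ and $|D\psi|^{p-2}\mathcal P^-_{\lambda,\Lambda}(D^2\psi)$ explicitly in terms of the profile $g$ and its derivatives, evaluated at the self-similar variable $\sigma := \tfrac{2|x|}{3t^\alpha}$, and then show that on the support $\{|x|<\tfrac32 t^\alpha\}$, i.e. for $\sigma\in(0,1)$, the resulting inequality can be forced by choosing $\alpha$ small. First I would fix a point with $|x|>0$ (the set $\{x=0\}$ is lower-dimensional and $\psi$ is smooth and radially decreasing there, so the inequality there is an easy limiting case — or one notes $D\psi=0$ there and the degenerate operator vanishes, while $\partial_t\psi<0$), write $r=|x|$, and note that $\psi(x,t)=\alpha^{1/(p-2)}t^{-\beta}g(\sigma)$ with $\sigma = \tfrac{2r}{3t^\alpha}$. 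Then $\partial_t\psi = \alpha^{1/(p-2)}t^{-\beta-1}\big(-\beta g(\sigma) - \alpha\sigma g'(\sigma)\big)$, and $D\psi = \alpha^{1/(p-2)}t^{-\beta}g'(\sigma)\tfrac{2}{3t^\alpha}\tfrac{x}{r}$, so $|D\psi| = \alpha^{1/(p-2)}t^{-\beta-\alpha}|g'(\sigma)|\tfrac{2}{3}$ (recall $g$ is decreasing, so $g'\le 0$). The Hessian $D^2\psi$ has the standard radial form: eigenvalue $\alpha^{1/(p-2)}t^{-\beta-2\alpha}(\tfrac23)^2 g''(\sigma)$ in the radial direction and $\alpha^{1/(p-2)}t^{-\beta-2\alpha}(\tfrac23)^2\tfrac{g'(\sigma)}{\sigma}$ with multiplicity $n-1$ in the tangential directions.

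Next I would insert these into the Pucci operator. The key structural point is that on $(\tfrac12,1)$ we have $g(s)=s^{-q}-1$, so $g'(s)=-qs^{-q-1}<0$, $g''(s)=q(q+1)s^{-q-2}>0$, and on $(0,\tfrac14]$, $g$ is the constant $2^q$, so $g'=g''=0$ there; on the transition region $[\tfrac14,\tfrac12]$ the function $g$ is some fixed smooth interpolation with $g,|g'|,|g''|$ bounded by universal constants. Since the tangential eigenvalue $g'(\sigma)/\sigma\le 0$ always, and the radial eigenvalue $g''$ is $\ge 0$ on $(\tfrac12,1)$ (and $0$ near the origin), on the region $\sigma\in(\tfrac12,1)$ one gets $\mathcal P^-(D^2\psi) = \alpha^{1/(p-2)}t^{-\beta-2\alpha}(\tfrac23)^2\big(\lambda g''(\sigma) + \Lambda(n-1)\tfrac{g'(\sigma)}{\sigma}\big)$, so that multiplying by $|D\psi|^{p-2}$ and collecting powers of $t$ — note $(-\beta-\alpha)(p-2) + (-\beta-2\alpha) = -\beta p - \alpha p + \beta - \beta= \ldots$; here one should check that the time exponent matches $-\beta-1$ exactly, which is precisely the reason $\beta=\tfrac{1-\alpha p}{p-2}$ was chosen — the self-similar scaling balances. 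After cancelling the common factor $\alpha^{(p-1)/(p-2)}t^{-\beta-1}(\tfrac23)^p$ the inequality $\partial_t\psi - |D\psi|^{p-2}\mathcal P^-(D^2\psi)<0$ reduces to a pointwise inequality in $\sigma\in(0,1)$ of the form
\[
-\frac{\beta}{\alpha} g(\sigma) - \sigma g'(\sigma) \;<\; \Big(\tfrac23\Big)^{-1}|g'(\sigma)|^{p-2}\Big(\lambda\, g''(\sigma) + \Lambda(n-1)\frac{g'(\sigma)}{\sigma}\Big)
\]
up to keeping track of constants; the crucial feature is that $\beta/\alpha = \tfrac{1-\alpha p}{\alpha(p-2)} \to +\infty$ as $\alpha\to 0$, while $g(\sigma)\ge g(1)=0$ with $g(\sigma)>0$ strictly on $(0,1)$ except that $g(1)=0$.

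The main obstacle is the behavior near the free boundary $\sigma\to 1^-$, where $g(\sigma)\to 0$ and the "good" $-\tfrac{\beta}{\alpha}g(\sigma)$ term loses its strength; here one must instead exploit that $g'(1)=-q<0$, so $-\sigma g'(\sigma)\to q>0$ on the left side while the right side behaves like $-|g'|^{p-2}\cdot\lambda g'' \cdot(\text{sign})$; since $g''>0$ there the term $\lambda g''$ is a positive contribution on the right, but $\Lambda(n-1)g'/\sigma<0$ works against us. So near $\sigma=1$ I would compute directly with $g(s)=s^{-q}-1$: the inequality becomes, after substitution, something like $q(q+1)\lambda\,q^{p-2} > \Lambda(n-1)q^{p-2}\cdot q + (\text{lower order as }\sigma\to1)$, i.e. it suffices that $q(q+1)\lambda > \Lambda(n-1)q$, which holds once $q = q_0$ is chosen large enough depending only on $\Lambda/\lambda$ and $n$ — this is exactly why $q_0$ must be a large universal constant. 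On the compact middle region $\sigma\in[\delta, 1-\delta]$ one has $g>0$ bounded below and $|g|,|g'|,|g''|$ bounded above by universal constants, so choosing $\alpha_0$ small (depending on $q_0$, hence universal) makes the term $-\tfrac{\beta}{\alpha}g(\sigma)$ dominate everything. Near $\sigma\to 0$ (including the flat region $\sigma\le\tfrac14$) both $D\psi$ and $D^2\psi$ vanish, so the right side is $0$ and the left side is $-\tfrac\beta\alpha g(\sigma)\cdot\alpha^{1/(p-2)}t^{-\beta-1}+\ldots <0$ trivially since $g>0$ there; and on the transition interval $[\tfrac14,\tfrac12]$ all quantities are bounded and again $g$ is bounded below, so small $\alpha$ wins. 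Assembling these three (or four) regimes — flat core, transition, compact middle, free-boundary edge — with the quantitative choice $q_0 = q_0(\Lambda,\lambda,n)$ first and then $\alpha_0 = \alpha_0(q_0,p,\Lambda,\lambda,n)$ second yields the strict subsolution property, completing the proof.
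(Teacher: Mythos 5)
Your proposal follows essentially the same route as the paper's proof: pass to the self-similar variable $\sigma=\tfrac{2|x|}{3t^\alpha}$, check that the choice $\beta=\tfrac{1-\alpha p}{p-2}$ balances the time exponents, treat the flat core $\sigma\le\tfrac14$ trivially, handle the region where $g(\sigma)=\sigma^{-q}-1$ by taking $q_0$ large so that the Pucci term (with the eigenvalue combination $\lambda(q+1)-\Lambda(n-1)$) dominates the time derivative, and handle the bounded transition region by taking $\alpha_0$ small using $p-1>1$ — exactly the paper's two-step choice of $q_0$ first and $\alpha_0$ second. Your extra subdivision of the outer region into a ``compact middle'' and a ``free-boundary edge'' is unnecessary (the large-$q_0$ comparison works uniformly on $\tfrac12<\sigma<1$, as in the paper, since one may simply drop the nonpositive term $-\tfrac{\beta}{\alpha}g$ and use $\sigma^{-(q+1)(p-2)}\ge 1$), and the constant bookkeeping is loose, but these are organizational rather than substantive differences.
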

\begin{proof}
The proof is a  direct computation, where we carefully examine the different ranges determined by the value of the ratio $\tfrac{2|x|}{3t^\alpha}$. We also denote $a=  \alpha^{\frac{1}{p-2}}$ to shorten the notation.   Let us first study  the case  
\[
\frac12 < \frac{2|x|}{3t^\alpha} < 1,
\]
where it holds $g\left(\frac{2|x|}{3t^\alpha}\right) = \left(\frac{2|x|}{3t^\alpha}\right)^{-q} -1$.  Differentiating with respect to time and using the relation  $\alpha=a^{p-2}$  yields
\begin{equation} \label{eq:barneblatt1}
\partial_t \psi = -\beta a\, t^{-\beta-1} g\left(\frac{2|x|}{3t^\alpha}\right) + q\, \alpha \, a  \, t^{-\beta} t^{-1} \left(\frac{2|x|}{3t^\alpha}\right)^{-q} \leq  q\, a^{p-1}  \, t^{-\beta-1}\left(\frac{2|x|}{3t^\alpha}\right)^{-q}.
\end{equation}
We now compute the spatial derivatives of $\psi$, 
\begin{equation} \label{eq:barneblatt2}
D \psi=-a\, q\, t^{-\beta}\left(\frac{2|x|}{3t^{\alpha}}\right)^{-q-1}\frac{2}{3t^{\alpha}}\, \frac{x}{|x|}
\end{equation}
 and 
\[
\begin{split}
D^2 \psi &=- \frac{a\, q\, t^{-\beta} }{|x|}\left(I_n - \frac{x}{|x|} \! \otimes \!\frac{x}{|x|}\right)\left(\frac{2|x|}{3t^{\alpha}}\right)^{\!-q -1} \frac{2}{3t^{\alpha}}\  +a\, q(q+1)\, t^{-\beta} \left(\frac{x}{|x|} \!\otimes \!\frac{x}{|x|}\right) \left(\frac{2|x|}{3t^{\alpha}}\right)^{\!-q-2} \frac{4}{9t^{2 \alpha}}\\
&=a\, q\, t^{-\beta} \left(-I_n +(q+2)\frac{x}{|x|} \otimes \frac{x}{|x|}\right)\left(\frac{2|x|}{3t^{\alpha}}\right)^{-q-2} \frac{4}{9t^{2 \alpha}}.
\end{split}
\] 
The matrix $-I_n + (q+2) \frac{x}{|x|} \otimes \frac{x}{|x|}$ has one  positive eigenvalue which is $q+1$ and $n-1$ negative eigenvalues which  are $-1$. Therefore, when $q$ is sufficiently large, it holds
\[
\mathcal{P}_{\lambda,\Lambda }^-(D^2 \psi)   = a\, q\, t^{-\beta-2\alpha } \frac49  \left( \lambda (q+1) - \Lambda(n-1) \right) \left(\frac{2|x|}{3t^{\alpha}}\right)^{-q-2} \geq  \lambda a\, q \, t^{-\beta-2\alpha} \, \left(\frac{2|x|}{3t^{\alpha}}\right)^{-q-2}.
\]
Therefore, using $\left(\frac{2|x|}{3t^{\alpha}}\right)^{-(q+1)(p-2)} > 1$ (which follows from $\frac{2|x|}{3t^\alpha} < 1$) and the relation $\beta = \frac{1- \alpha p}{p-2},$ we obtain from the above inequality and from \eqref{eq:barneblatt2}
\begin{align*}
-|D \psi|^{p-2} \mathcal{P}_{\lambda,\Lambda }^-(D^2 \psi) & \le -\lambda \, a^{p-1} \, q^{p-1}  t^{-\beta (p-1)} t^{-\alpha p}\left(\frac{2|x|}{3t^{\alpha}}\right)^{-q-2}\frac{2^{p-2}}{3^{p-2}}\\
& \le - \lambda \, a^{p-1} \, q^{p-1}   t^{-\beta -1}  \left(\frac{2|x|}{3t^{\alpha}}\right)^{-q} \frac{2^{p-2}}{3^{p-2}}.
\end{align*}
Combining this with \eqref{eq:barneblatt1} implies
\[
\partial_t \psi -|D \psi|^{p-2} \mathcal{P}_{\lambda,\Lambda }^-(D^2 \psi) \le a^{p-1}  \, t^{-\beta-1}\left(q-\lambda \frac{2^{p-2}}{3^{p-2}} q^{p-1}\right)\ \left(\frac{2|x|}{3t^\alpha}\right)^{-q}.
\]
Choosing $q_0$ sufficiently large, the above yields that for all $a = \alpha^{\frac{1}{p-2}}  > 0$ and $q \ge q_0$, 
\[
\partial_t \psi  -|D \psi|^{p-2} \mathcal{P}_{\lambda,\Lambda }^-(D^2 \psi) < 0.
\]
Hence, the argument for the case $\frac{1}{2} < \frac{2|x|}{3t^\alpha} < 1$ is now complete. Note that here we have fixed $q_0$, but not $\alpha_0$. 

Next we proceed to  the  case 
\[
\frac14 \leq \frac{2|x|}{3t^\alpha} \leq \frac12.
\]
Differentiating in time yields
\[
\partial_t \psi  = -\beta a\, t^{-\beta-1} g\left(\frac{2|x|}{3t^\alpha}\right) - a \alpha \, t^{-\beta-1}  \frac{2|x|}{3t^\alpha}g'\left(\frac{2|x|}{3t^\alpha}\right).
\]
Notice that when $\alpha \leq \frac{1}{2p}$, we have from the choice of $\beta$ in \eqref{eq:barrier}  that $\beta \ge \frac{1}{2(p-2)}$.  Since $g$ is decreasing, it holds $g\left(\frac{2|x|}{3t^\alpha}\right) \geq g\left(\frac12\right) = 2^q -1$.
Therefore since $g$ is smooth and $\alpha = a^{p-2}$ we have
\[
\partial_t \psi \leq  (-ca+Ca^{p-1}) t^{-\beta-1},
\]
for constants $0 < c < 1$ and $C > 1$, which are independent of $\alpha$ when  $\alpha \leq \frac{1}{2p}$.  

We now estimate the spatial derivative of $\psi$ as   
\[
|D \psi|  \le C a t^{-\beta-\alpha} 
\]
 and 
\[
|D^2 \psi| \leq C a \left(  t^{-\beta-2\alpha}  + \frac{ t^{-\beta-\alpha} }{|x|}\right).
\]
Write $|x|^{-1}=\frac23 t^{-\alpha}\left(\frac{2|x|}{3t^\alpha}\right)^{-1},$ use $\frac14 \le \frac{2|x|}{3t^\alpha} \leq \frac12,$ and $\beta = \frac{1- \alpha p}{p-2},$ to find
\[
 |D \psi|^{p-2} |\mathcal{P}_{\lambda,\Lambda }^-(D^2 \psi)| \le C a^{p-1}t^{-\beta (p-1)}t^{-\alpha p}=C a^{p-1}t^{-\beta -1}.
\]
In conclusion,  we obtain
\[
\partial_t \psi- |D \psi|^{p-2} \mathcal{P}_{\lambda,\Lambda }^-(D^2 \psi) \le (-ca+C a^{p-1})t^{-\beta -1}.
\]
Since $p>2$ (or equivalently $p-1>1$), we can choose $\alpha_0>0$ small enough such that for all  $a = \alpha^{\frac{1}{p-2}} \leq \alpha_0^{\frac{1}{p-2}}<\frac1{2p}$ and it holds  
\[
-ca+C a^{p-1} < 0.
\]
 Consequently, for $q=q_0$ and $0<\alpha<\alpha_0$, we have
\[
\partial_t \psi- |D \psi|^{p-2} \mathcal{P}_{\lambda,\Lambda }^-(D^2 \psi) < 0.
\]
This completes the proof of the claim for the case $\frac14 \le  \frac{2|x|}{3t^\alpha} \leq \frac12.$

We are left with the case 
\[
\frac{2|x|}{3t^\alpha} < \frac{1}{4}.
\]
 In this case the argument is trivial as here  $\psi(x,t)=at^{-\beta}2^q,$ and therefore
\[
\partial_t \psi -|D \psi|^{p-2} \mathcal{P}_{\lambda,\Lambda }^-(D^2 \psi)=-a \beta t^{-\beta -1}2^q < 0.
\]
This completes the proof of the lemma.
\end{proof}

The following lemma is an important building block in the proofs of the main theorems. We see from the example in Section \ref{subsec:ex}  that if $u(0,0)=1$, then the intrinsic cylinder $Q$, with $(x_0,t_0)= (0,0)$, in the statement of Theorem \ref{thm:weak-harnack} may be very flat. We choose to assume that the value of the function at the reference point is small, i.e., $u(0,0)\leq m_0$ in order to have the intrinsic cylinder to be of size one.  

 \begin{lemma}
\label{lem:barrier}
There exist universal constants $0<m_0<1$ and $L_0>1$ such that if  $u :Q_3^- \to \R$ is a nonnegative viscosity supersolution of \eqref{eq1} such that   $u(0,0) \leq  m_0,$ then for any $(x_0,t_0) \in B_{4/3}\! \times \!(-3, -\tfrac12]$  there exists $\bar x \in B_{1/32}(x_0)$ such that 
\[
u(\bar x,t_0) \leq L_0m_0.
\]
 \end{lemma}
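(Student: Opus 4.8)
The plan is to argue by contradiction, using the function $\psi$ from Lemma~\ref{lem:barenblatt} as an expanding barrier touching $u$ from below. Suppose the conclusion fails, so that $u(\cdot,t_0) > L_0 m_0$ on all of $B_{1/32}(x_0)$, where $m_0\in(0,1)$ and $L_0>1$ are universal constants to be fixed at the end. Fix once and for all $q=q_0$ and some $\alpha\in(0,\alpha_0)$ from Lemma~\ref{lem:barenblatt}, and for parameters $r,M>0$ and a time shift $\delta=|t_0|/K$ (with $K$ a large universal constant, chosen below) set
\[
\phi(x,t):=\frac{1}{M}\,\psi\!\left(r(x-x_0),\ r^{p}M^{-(p-2)}(t-t_0+\delta)\right).
\]
By the scaling identity behind Remark~\ref{rem:scaling} (which applies verbatim to subsolutions of \eqref{eq1}) together with translation invariance, $\phi$ is $C^2$ and satisfies $\p_t\phi-|D\phi|^{p-2}\mathcal{P}_{\lambda,\Lambda}^-(D^2\phi)<0$ on the open set where the $\psi$-argument lies in $\{|y|<\tfrac32 s^\alpha,\ s>0\}$; in particular on $\mathcal{D}:=\{(x,t):t_0<t\le 0,\ \phi(x,t)>0\}$, and $\phi$ vanishes on $\partial\{\phi>0\}$.

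I would then calibrate the parameters so that $\phi$ has the following shape: at time $t_0$ its positivity set is a ball $B_{R_0}(x_0)$ with $\max_x\phi(x,t_0)\le L_0 m_0$; at time $0$ its positivity set is $B_{R_1}(x_0)$ with $\phi(0,0)>m_0$; and $\mathcal{D}\subset Q_3^-$. Prescribing the radius at time $0$ to be $R_1:=\tfrac54\max\{1,|x_0|\}\in[\tfrac54,\tfrac53)$ guarantees $|x_0|<R_1<3-|x_0|$ for every $|x_0|<\tfrac43$, so the origin lies in $B_{R_1}(x_0)$ and all of $\mathcal{D}$ stays inside $B_3\times(-3,0]=Q_3^-$; fixing $\delta=|t_0|/K$ forces the growth ratio $R_1/R_0=(K+1)^\alpha$, so $R_0\le 1/32$ once $K$ is chosen (universally, since $\alpha$ is now fixed) large enough. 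A direct computation then gives
\[
\phi(0,0)=\alpha^{\frac1{p-2}}\,g\!\big(\tfrac{|x_0|}{R_1}\big)\,(|t_0|+\delta)^{-\frac1{p-2}}\Big(\tfrac{2R_1}{3}\Big)^{\frac{p}{p-2}},\qquad \max_x\phi(x,t_0)=\frac{2^{q_0}(K+1)^\beta}{g(|x_0|/R_1)}\,\phi(0,0),
\]
with $\beta=\tfrac{1-\alpha p}{p-2}$ as in \eqref{eq:barrier}. Since $\tfrac{|x_0|}{R_1}\le\tfrac45$, $R_1\in[\tfrac54,\tfrac53)$ and $|t_0|\in[\tfrac12,3)$, the first quantity is bounded below by a universal $m_*>0$ and the second is bounded above by a universal $B$; it then suffices to set $m_0:=\min\{m_*/2,\tfrac12\}$ and $L_0:=\max\{2,B/m_0\}$, which are universal, to secure $\phi(0,0)\ge m_*>m_0$ and $\max_x\phi(x,t_0)\le B\le L_0 m_0$.

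Finally I would close the comparison. Since $R_0<1/32$, on the bottom $\{t=t_0\}\cap\overline{B_{R_0}(x_0)}$ of $\mathcal{D}$ we have $\phi\le L_0 m_0<u$ by the standing assumption, while on the lateral part $\phi=0\le u$ as $u\ge 0$. If $u-\phi$ were negative somewhere on the compact set $\overline{\mathcal{D}}$, its minimum would be attained, by lower semicontinuity, at some $(x^*,t^*)$; by the previous two inequalities this point cannot lie on the parabolic boundary, so $t^*>t_0$ and $(x^*,t^*)$ is strictly inside the positivity set of $\phi(\cdot,t^*)$. Then $\phi+(u-\phi)(x^*,t^*)$ touches $u$ from below at $(x^*,t^*)$ in the sense of Definition~\ref{def:visco}, and the supersolution inequality forces $\p_t\phi-|D\phi|^{p-2}\mathcal{P}_{\lambda,\Lambda}^-(D^2\phi)\ge 0$ at $(x^*,t^*)$, contradicting the strict subsolution property above. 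Hence $\phi\le u$ on $\overline{\mathcal{D}}$, and evaluating at the origin gives $m_0<\phi(0,0)\le u(0,0)\le m_0$, a contradiction; therefore such a $\bar x$ exists.

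The crux is the calibration in the middle step: making $m_0$ and $L_0$ universal, i.e.\ independent of the particular $(x_0,t_0)$. The obstruction is that for small $\alpha$ the positivity set of the Barenblatt-type barrier spreads very slowly in intrinsic time, so carrying a bump of radius $R_0\le 1/32$ out to distance $\sim|x_0|\lesssim 1$ demands a large growth ratio, forcing $K$ (hence $(K+1)^\beta$) to be large — but still universal, precisely because $\alpha$ is fixed once and for all. The feature that makes the height bookkeeping balance is the algebraic identity $\beta+\tfrac{p\alpha}{p-2}=\tfrac1{p-2}$ together with the exact cancellation of the $r$-dependence in $\phi(0,0)$; these force $\phi(0,0)$ to be bounded below and $\max_x\phi(\cdot,t_0)$ bounded above by constants depending only on $n,p,\lambda,\Lambda$. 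The remaining points — the geometric inequalities $|x_0|<R_1<3-|x_0|$ valid on all of $B_{4/3}$ (which is exactly why the factor $\tfrac54$, equivalently the contact ratio $\tfrac45$, appears), and the minimum-principle step — are routine.
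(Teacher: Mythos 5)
Your proof is correct and follows essentially the same route as the paper: comparison with the expanding Barenblatt-type barrier of Lemma \ref{lem:barenblatt} on its positivity set, forcing the negative minimum of $u$ minus the barrier onto the bottom time slice, whose radius is arranged to be smaller than $\tfrac{1}{32}$, and then using the viscosity supersolution property to exclude an interior touching point. The only difference is cosmetic: you fix $\alpha$ once and calibrate by rescaling $\psi$ in space and height (arguing by contradiction), whereas the paper uses $\psi$ directly with a small time shift $b$ and chooses $\alpha$ small so that the support at time $0$ reaches the origin.
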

 \begin{proof} 
We recall the definition of $\psi$ in \eqref{eq:barrier}
\[
\psi(x,t) : =  \alpha^{\frac{1}{p-2}}\, t^{-\beta}g\left(\frac{2|x|}{3t^\alpha}\right) \qquad \text{for } \, \beta = \frac{1- \alpha p}{p-2}, 
\]
where $g$ is defined in \eqref{def:func-g}, with  $q=q_0$ and $\alpha \in (0,\alpha_0)$, where $q_0$ and $\alpha_0$ are from Lemma \ref{lem:barenblatt}.  Fix \( (x_0,t_0)  \in B_{4/3}\! \times \!(-3, -\tfrac12] \) and  define the domain
\[
P = \left\{ (x, t) : |x-x_0| < \tfrac{3}{2} (t-t_0+b)^{\alpha} \quad \text{and} \quad t_0 < t \le 0 \right\},
\]
where $b \in (0,1)$ and which choice will be clear later, and $\alpha$ is small. We claim that $(0,0) \in P$. Indeed, it follows from $x_0 \in B_{4/3}$, $t_0 \leq -\frac12$,  $b\geq 0$  and by choosing $\alpha$ small enough that  
\begin{equation}\label{eq:barrier-1}
|x_0| < \frac{4}{3}\leq \frac{3}{2} (-t_0 + b)^{\alpha} \cdot \frac{17}{18}.
\end{equation}
Hence $(0,0) \in P$. Let us also show that $P \subset Q_3^-$.  By choosing $\alpha$ small and arguing as in  \eqref{eq:barrier-1}, we conclude  that  for all  $(x,t) \in P$ it holds  $|x-x_0|< \frac53$. Therefore, since $|x_0|\leq \frac43$ we conclude that $x \in B_3$. It is then obvious that $(x,t) \in Q_3^- $  as $t \in (-3,0]$.

Let us denote 
\[
\psi_{(x_0,t_0)}(x,t) = \psi(x - x_0, t - t_0 + b). 
\]
Recall that  $(0,0) \in P$ and that we assume  $u(0,0) \leq m_0$.
Now, by choosing $m_0 =  4^{-\beta-1} \alpha^{\frac{1}{p-2}} g\left(\tfrac{17}{18}\right) > 0$,  using the fact that $g$ is  decreasing,  $-t_0 \leq 3,  b \leq 1$ and \eqref{eq:barrier-1} we have
\begin{equation}\label{eq:barr_min}
\begin{split}
\min_{\bar{P}}(u - \psi_{(x_0,t_0)}) 
&\le m_0 - \alpha^{\frac{1}{p-2}}\, (-t_0+b)^{-\beta} g\left(\frac{2|x_0|}{3(-t_0 + b)^\alpha}\right) \\
&\le m_0 - 4^{-\beta}\alpha^{\frac{1}{p-2}}\, g\left(\tfrac{17}{18}\right) < 0.
\end{split}
\end{equation}

Define $(x_m,t_m)$  to be a point where the function $(u- \psi_{(x_0,t_0)})$ attains its minimum in $\bar P$. For  $(x,t)$ with $|x-x_0| = \tfrac{3}{2} (t-t_0+b)^{\alpha},$ we have 
\[
\psi_{(x_0,t_0)}(x,t)=\psi(x-x_0,t-t_0+b)= \alpha^{\frac{1}{p-2}}(t-t_0+b)^{-\beta}g(1)=0.
\]
Recall that  $u \ge 0$. Therefore  
we conclude that  the point $(x_m,t_m)$ does not belong to the part of the boundary of $P$  where $|x-x_0| = \tfrac{3}{2} (t-t_0+b)^{\alpha}.$

Let us next show  that $(x_m,t_m)$ is not in $P$. We argue by contradiction and assume    $(x_m,t_m) \in P$, i.e., 
\[
(u- \psi_{(x_0,t_0)})(x_m,t_m) \leq (u- \psi_{(x_0,t_0)})(x,t) \qquad \text{for all } \, (x,t) \in P.
\]
But this means that the function $\varphi(x,t) = \psi_{(x_0,t_0)}(x,t) + u(x_m,t_m)- \psi_{(x_0,t_0)}(x_m,t_m)$ touches $u$ from below in $P$. Since $u$ is a viscosity supersolution of \eqref{eq1}, we have 
\begin{align*}
0 & \leq \p_t \varphi(x_m,t_m) - |D\varphi|^{p-2}\mathcal{P}_{\lambda,\Lambda}^-(D^2\varphi)(x_m,t_m)\\ 
&= \p_t \psi(x_m-x_0,t_m-t_0+b) - |D\psi|^{p-2}\mathcal{P}_{\lambda,\Lambda}^-(D^2\psi)(x_m-x_0,t_m-t_0+b),
\end{align*}
which contradicts Lemma \ref{lem:barenblatt}  as $(x_m,t_m) \in P$. 

We deduce that  $u-\psi$ attains its minimum on the part of the boundary of $P$ where $$t=t_0 \quad \text{and} \quad |x-x_0| < \frac{3}{2} b^{\alpha}.$$
By choosing $b \in (0,1)$ such that $ \frac{3}{2} b^{\alpha} < \frac{1}{32}$ we have  $x_m \in B_{1/32}(x_0).$ Now, using \eqref{eq:barr_min} and the fact that \( g \) is a decreasing function, we find  
\[
u(x_m,t_0) \le \psi(x_m - x_0, b) =  \alpha^{\frac{1}{p-2}} b^{-\beta} g\left(\frac{2|x_m - x_0|}{3b^{\alpha}} \right) \le \alpha^{\frac{1}{p-2}} b^{-\beta} g(0) = \alpha^{\frac{1}{p-2}}  b^{-\beta} 2^{q_0}.
\]
Thus, the claim follows by choosing \( L_0 =  \frac{\alpha^{\frac{1}{p-2}} b^{-\beta} 2^{q_0}}{m_0} \). 

\end{proof}

The previous lemma is  enough  for the proof of the weak Harnack inequality, but we need to iterate it in order to obtain the propagation of boundedness, which is crucial in the proof of Theorem \ref{thm:harnack}.

\begin{lemma}
\label{lem:propagation-space-2}
Let $0<m_0<1$ and $L_0>1$  be as in the previous lemma and assume  $u: Q_3^-\to \R $ is a nonnegative viscosity  supersolution of \eqref{eq1} with $u(0,0) \leq  m_0$. Then for any $k \in \mathbb{N}$ and  for any $(x_0,t_0) \in  B_{4/3} \!\times\! (-3, -1 + 32^{-kp} ]$ there is $\bar x \in B_{32^{-k}}(x_0)$ such that 
\[
u(\bar x,t_0) \leq  L_0^k m_0.
\]
 \end{lemma}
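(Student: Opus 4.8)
The plan is to prove this by induction on $k$, using Lemma~\ref{lem:barrier} as the base case $k=1$ and iterating it while carefully rescaling the solution to account for the nonhomogeneity of the equation. The key observation is that Lemma~\ref{lem:barrier} provides a point $\bar x \in B_{1/32}(x_0)$ where $u$ drops to at most $L_0 m_0$ \emph{at the same time level} $t_0$, and this is exactly the structure needed for an iteration in which the spatial neighborhood shrinks geometrically by factors of $32$.

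For the inductive step, suppose the statement holds for some $k \geq 1$ and let $(x_0,t_0) \in B_{4/3} \times (-3, -1 + 32^{-(k+1)p}]$. First I would apply Lemma~\ref{lem:barrier} (the base case) to obtain $x_1 \in B_{1/32}(x_0)$ with $u(x_1, t_0) \leq L_0 m_0$. Then I would zoom in around $(x_1, t_0)$ by defining a rescaled function of the form
\[
v(x,t) = \frac{u(x_1 + 32^{-1} x,\; t_0 + 32^{-p} M^{-(p-2)} t)}{M}, \qquad M = L_0 m_0,
\]
which by Remark~\ref{rem:scaling} is again a nonnegative viscosity supersolution of \eqref{eq1} on an appropriate cylinder, and satisfies $v(0,0) = u(x_1,t_0)/M \leq 1$. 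The subtlety is that $M = L_0 m_0$ may exceed $1$, so $v(0,0) \leq 1$ rather than $v(0,0) \leq m_0$; to restore the hypothesis of the lemma one multiplies by a further harmless factor, or more cleanly one notes that $u(0,0) \le m_0$ can be leveraged at the rescaled level — in fact the cleanest bookkeeping is to carry $m_0$ through and observe that the iteration produces $v(0,0) \leq m_0$ automatically because each application of Lemma~\ref{lem:barrier} contracts the value relative to the scale. I would then check that the rescaled time interval for $v$, namely $t_0$ shifted and stretched by $32^{-p} M^{-(p-2)}$, still covers $(-3, -1 + 32^{-kp}]$ in the new coordinates — this uses $M \geq 1$ (so $M^{-(p-2)} \leq 1$, the time scale only shrinks) together with $t_0 > -3$ and $t_0 \le -1 + 32^{-(k+1)p}$, which after rescaling becomes $t_0^{new} \le -1 + 32^{-kp}$, and the spatial condition $x_1 \in B_{4/3}$ in the new frame. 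Applying the inductive hypothesis to $v$ yields $\bar x^{new} \in B_{32^{-k}}$ with $v(\bar x^{new}, t_0^{new}) \leq L_0^k m_0$; undoing the scaling gives $\bar x = x_1 + 32^{-1}\bar x^{new} \in B_{32^{-1}} B_{32^{-k}}(x_1) \subset B_{32^{-(k+1)}}(x_0) + B_{32^{-1}}(0)$ — here I must be careful that $B_{32^{-1}}(x_0)$ and then a $32^{-k-1}$-ball around $x_1$ nest correctly inside $B_{32^{-(k+1)}}(x_0)$; the right way is that $x_1 \in B_{32^{-1}}(x_0)$ and $\bar x \in B_{32^{-(k+1)}}(x_1)$ only gives $B_{32^{-1}+32^{-(k+1)}}(x_0)$, so the iteration should instead peel off the \emph{last} factor: apply the inductive hypothesis first on the coarse scale to land in $B_{32^{-k}}(x_0)$ at value $L_0^k m_0$, then apply one rescaled copy of Lemma~\ref{lem:barrier} on a $32^{-k}$-cylinder to contract to $B_{32^{-(k+1)}}$ and value $L_0^{k+1} m_0$. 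That ordering makes the radii telescope exactly.

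Concretely, in the induction I would fix $(x_0,t_0) \in B_{4/3} \times (-3, -1+32^{-(k+1)p}]$, apply the inductive hypothesis (valid since $-1+32^{-(k+1)p} \le -1+32^{-kp}$) to get $x_k \in B_{32^{-k}}(x_0)$ with $u(x_k,t_0) \le L_0^k m_0 =: M$, then set $v(x,t) = M^{-1} u(x_k + 32^{-k}x,\; t_0 + 32^{-kp}M^{-(p-2)}t)$. By Remark~\ref{rem:scaling}, $v$ is a nonnegative supersolution of \eqref{eq1}, defined on $Q_3^-$ provided the rescaled cylinder fits inside $Q_3^-$ (it does, since $x_k \in B_{4/3}$, $32^{-k} \cdot 3 \le 3$, and the time scaling only shrinks the interval because $M \ge 1$). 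Moreover $v(0,0) = u(x_k,t_0)/M \le 1$; to get $v(0,0) \le m_0$ one uses that $u(x_k,t_0)$ is in fact at most $m_0$-fraction of $M$ only if the induction hypothesis is sharpened — alternatively and more robustly, I would prove the lemma with the slightly weaker-looking but equivalent hypothesis $v(0,0)\le 1$ by noting Lemma~\ref{lem:barrier} itself only truly needs the comparison $u(0,0) \le m_0 \le \psi_{(x_0,t_0)}(0,0)$, and when $v(0,0)\le 1$ one simply works with $\tilde v = m_0 v$, still a supersolution after the scaling in Remark~\ref{rem:scaling} with $M \to m_0^{-1}$-type adjustment. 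Having arranged the hypothesis, apply Lemma~\ref{lem:barrier} to $v$: for the point $(0,0) \in B_{4/3}\times(-3,-1/2]$ — wait, the relevant point is the origin and we need the conclusion at the correct time level, so one applies it with $(x_0,t_0)_{\text{Lemma}}$ chosen so that the time level $t_0$ of interest corresponds; the time-level shift works out because $t_0 + 32^{-kp}M^{-(p-2)} t = t_0$ forces $t=0$, and the origin in the rescaled frame sits at time $0 \in (-3,-1/2]$ only after translating — this is where one verifies $(-1+32^{-(k+1)p})$ rescales to a time $\le -1/2$, using $32^{-kp}M^{-(p-2)} \le 32^{-kp}$ and the geometry. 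This yields $\bar y \in B_{1/32}(0)$ with $v(\bar y, 0) \le L_0 m_0$, hence $\bar x := x_k + 32^{-k}\bar y \in B_{32^{-(k+1)}}(x_k) \subset B_{32^{-k} + 32^{-(k+1)}}(x_0)$ — and again one sees the radii must be chosen so the total is $\le 32^{-(k+1)}$, which forces using $B_{32^{-(k+1)}}(x_0)$ by a slightly more careful choice of the inductive statement (e.g. replacing $32^{-k}$ by $\frac12 32^{-k}$ in intermediate steps, or absorbing constants), and $u(\bar x, t_0) = M v(\bar y,0) \le L_0^k m_0 \cdot L_0 m_0 \le L_0^{k+1} m_0$.

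The main obstacle I anticipate is the bookkeeping of the geometric parameters: ensuring simultaneously that (a) the rescaled cylinders remain inside $Q_3^-$ at every step, (b) the time intervals $(-3, -1+32^{-kp}]$ transform correctly under the nonhomogeneous scaling $t \mapsto 32^{-kp}M^{-(p-2)} t$ — crucially using $M = L_0^k m_0 \ge 1$ (which holds once $k$ is large, but needs a separate small-$k$ check or a choice of $m_0, L_0$ guaranteeing $L_0 m_0 \ge 1$, or else handling $M < 1$ where the time scale \emph{grows}), and (c) the spatial balls $B_{32^{-k}}(x_0)$ telescope exactly rather than accumulating a geometric series $\sum 32^{-j}$ that overshoots $32^{-k}$. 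Point (c) is the one most likely to need a reformulation: the honest fix is to apply Lemma~\ref{lem:barrier} in the rescaled frame where the ambient ball is $B_{4/3}$ and the conclusion ball is $B_{1/32}$, so that $32^{-1} \cdot \frac{4}{3} < \frac{4}{3} - \frac{1}{32}$ leaves room, making the nesting $B_{32^{-(k+1)}}(x_0) \supset x_k + 32^{-k}B_{1/32}$ valid precisely because $|x_k - x_0| \le 32^{-k}$ and $32^{-k}\cdot\frac{1}{32} = 32^{-(k+1)}$ — no, this still gives $32^{-k} + 32^{-(k+1)}$. So the correct inductive statement to carry is really with the ball $B_{32^{-k}}(x_0)$ replaced by a ball whose radius telescopes, e.g. one proves $\bar x \in B_{r_k}(x_0)$ with $r_k = \frac{1}{32}\sum_{j=0}^{k-1} 32^{-j} < \frac{1}{31}$, which for the application (a single cylinder of controlled size) is just as good and avoids the overshoot entirely; then the statement as written follows since $r_k \le 32^{-k} \cdot \frac{32}{31} \le$ (adjust the claimed radius, or note $B_{r_k} \subset B_{1/31} \subset B_1$ suffices for all downstream uses). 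Once this telescoping is set up correctly, the rest is routine rescaling via Remark~\ref{rem:scaling} and invoking the previous lemma.
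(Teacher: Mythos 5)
Your overall scheme (induction plus one rescaled application of Lemma~\ref{lem:barrier} per step) is the right one, but the two points on which the argument actually hinges are left unresolved, and your fallback changes the statement into something too weak. The radius problem you identify is real, but your proposed fix --- carrying a ball of radius $r_k=\tfrac1{32}\sum_{j=0}^{k-1}32^{-j}$ --- does not telescope: $r_k$ \emph{increases} to $\tfrac1{31}$, the claimed bound $r_k\le 32^{-k}\cdot\tfrac{32}{31}$ is false for $k\ge 2$, and a non-shrinking ball is useless downstream, since Lemma~\ref{lem:propagation-space-3} and Lemma~\ref{prop:subsolution} precisely need a point within distance $32^{-k}$ (resp.\ $\rho_0^k$) of $x_0$ at which $u\le L_0^km_0$. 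The device that makes the radii contract exactly is to exploit that in Lemma~\ref{lem:barrier} the center of the output ball is decoupled from the origin where $u$ is small: in the paper one applies the induction hypothesis at the \emph{shifted} point $(x_0,t_1)$ with $t_1=t_0+\tfrac12\,32^{-kp}L_0^{-k(p-2)}$, obtaining $x_1\in B_{32^{-k}}(x_0)$ with $u(x_1,t_1)\le L_0^km_0$; one then rescales $v(y,s)=L_0^{-k}u\big(32^{-k}y+x_1,\,32^{-kp}L_0^{-k(p-2)}s+t_1\big)$, so that $v(0,0)\le m_0$ exactly (divide by $L_0^k$, not by $L_0^km_0$, and your normalization worry disappears), and applies Lemma~\ref{lem:barrier} to $v$ at the point $\big(z,-\tfrac12\big)$ with $z=32^{k}(x_0-x_1)$, i.e.\ at the rescaled position of the \emph{original} center $x_0$, not at the origin. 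The output ball $B_{1/32}(z)$ then scales back to $B_{32^{-(k+1)}}(x_0)$, centered at $x_0$, with no geometric-series loss.

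The same time shift resolves the second gap: in your ordering (induction at time $t_0$, rescale around $(x_k,t_0)$, then Lemma~\ref{lem:barrier}) the rescaled time $0$ corresponds to $t_0$, so Lemma~\ref{lem:barrier} can only produce a point at rescaled times $\le-\tfrac12$, i.e.\ strictly \emph{before} $t_0$; you notice this tension but never fix it. With $t_1=t_0+\tfrac12\,32^{-kp}L_0^{-k(p-2)}$ the rescaled time $-\tfrac12$ lands exactly at $t_0$ (and one checks $t_1\le -1+32^{-kp}$ since $32^{-p}+\tfrac12\le 1$, so the induction hypothesis is applicable at $(x_0,t_1)$). Finally, the remark that when $v(0,0)\le 1$ one may simply pass to $\tilde v=m_0v$, still a supersolution, is incorrect as stated: since \eqref{eq1} is nonhomogeneous, multiplying by a constant is admissible only together with the corresponding time rescaling of Remark~\ref{rem:scaling}, which would move the time level at which the conclusion is obtained. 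As written, your argument therefore does not prove the lemma, although once the two devices above are inserted the remaining bookkeeping you list in (a)--(c) goes through.
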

  
\begin{proof}
We argue by induction on $k$ and note that by Lemma \ref{lem:barrier}  the claim holds for $k =1$. Fix $k$, assume the claim holds for $k$, and assume $(x_0,t_0) \in B_{4/3} \!\times\! (-3, -1 + 32^{-(k+1)p} ].$   Denote $r_k =32^{-k} $ and $\theta_k=L_0^{-k(p-2)}.$ We use  the induction hypothesis to the point $(x_0,t_1)$, where $t_1 = t_0 + r_k^p  \frac{\theta_k}{2}$ and deduce that there is  $x_1$ such that  $|x_1-x_0|< 32^{-k}$ and  
\[
u(x_1,t_1)\leq  L_0^{k}m_0. 
\]
Consider the function $v:Q_3^- \to \R$
\[
v(y,s) = \frac{ u(r_ky+x_1,r_k^p \theta_k s +t_1) }{L_0^{k}} . 
\]
Then by Remark \ref{rem:scaling} $v$ is a nonnegative supersolution of \eqref{eq1} with  $v(0,0)\leq m_0$. Denote $z = \frac{(x_0-x_1)}{r_k}$ and notice that by the assumption $|x_1-x_0|< 32^{-k}$ it holds $|z|< 1$. By Lemma \ref{lem:barrier}, there is $\bar z \in B_{1/32}(z)$ such that  
\[
v(\bar z,-\tfrac12) \leq L_0m_0. 
\]  
Writing in terms of $u$, we have 
\[
u(r_k\bar z+x_1,-r_k^p  \tfrac{\theta_k}{2} +t_1 ) \leq  L_0^{k+1}m_0. 
\]
The definition of $t_1$ immediately yields $-r_k^p   \tfrac{\theta_k}{2} +t_1=t_0.$ Using $z = \frac{(x_0-x_1)}{r_k}$, we find 
$$r_k\bar z+x_1=r_k(\bar z-z)+r_k z +x_1=r_k(\bar z-z)+x_0.$$ Therefore,  using $|\bar z- z|< \frac1{32},$ we get 
\[
|r_k\bar z+x_1-x_0| \le 32^{-(k+1)}
\]
and the claim follows for $k+1$ by choosing $\bar x =r_k\bar z+x_1 $. 

\end{proof}

We also need a version of Lemma \ref{lem:propagation-space-2} with a different scaling. 
\begin{lemma}
\label{lem:propagation-space-3}
Let $0<m_0<1$ and $L_0>1$  be as in the previous lemma,  assume  $u: Q_3^-\to \R  $ is a nonnegative viscosity supersolution of \eqref{eq1} with $u(0,0) \leq  m_0$ and fix $\nu >1$. Then there are $\rho_0 = \rho_0(\nu) \in (0,1)$  and $k_0 \in \mathbb{N} $ such that    for any $(x_0,t_0) \in  B_{4/3} \!\times\! (-3, -1]$ and $k \geq k_0$ there is $\bar x \in B_{\rho_0^{k}}(x_0)$ such that 
\[
u(\bar x,t_0) \leq   \nu^km_0.
\]
 \end{lemma}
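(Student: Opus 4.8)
The plan is to deduce Lemma~\ref{lem:propagation-space-3} from Lemma~\ref{lem:propagation-space-2} by a change of the scaling parameter, trading the geometric factor $32^{-k}$ in the spatial radius for a slower rate $\rho_0^k$ while keeping the bound on $u$ of the form $\nu^k m_0$ with the \emph{prescribed} $\nu>1$ rather than the fixed universal $L_0$. The key observation is that Lemma~\ref{lem:propagation-space-2} gives, at each step, a gain of a factor $32$ in spatial scale at the cost of a factor $L_0$ in the value; by \emph{grouping} steps — applying the lemma with $k$ replaced by a multiple $jN$ — one can make the value-factor $L_0^{jN}$ as close to $\nu^{jN}$ as desired in exponential rate, provided $N$ is chosen so that $32^{-1}$ is comparable to $\nu^{-N/\log_{32}(\cdot)}$… more precisely, one picks $N$ so that $L_0 \le \nu^{N}$ is false but $L_0^{1/N}$ is small — wait, the correct bookkeeping is: choose $N \in \mathbb N$ so large that $L_0 \le \nu^{N}$, equivalently $L_0^{1/N} \le \nu$; then set $\rho_0 = 32^{-1/N}$.

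Concretely, first I would fix $N = N(\nu) \in \mathbb N$ with $\nu^N \ge L_0$, and set $\rho_0 = 32^{-1/N} \in (0,1)$, which depends only on $\nu$ and the universal constants. Given $k \ge k_0$ (with $k_0$ to be chosen), write $k = Nm + i$ with $0 \le i < N$, so $m = \lfloor k/N\rfloor$. I would apply Lemma~\ref{lem:propagation-space-2} with its integer parameter equal to $m$: this requires the hypothesis $t_0 \in B_{4/3}\times(-3,-1+32^{-mp}]$, which is implied by $t_0 \in (-3,-1]$ as in the present statement, and yields $\bar x \in B_{32^{-m}}(x_0)$ with $u(\bar x,t_0) \le L_0^{m} m_0$. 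Now $L_0^m \le \nu^{Nm} \le \nu^k$ by the choice of $N$, and $32^{-m} = \rho_0^{Nm} = \rho_0^{k-i} \le \rho_0^{k}\cdot\rho_0^{-i} \le \rho_0^{k}\cdot\rho_0^{-N}$; this last quantity is not quite $\le \rho_0^k$, so to absorb the factor $\rho_0^{-N} = 32$ I would instead require $32^{-m} \le \rho_0^{k}$ directly, which holds once $Nm \ge k$, i.e.\ $i = 0$ — so the clean argument is to apply Lemma~\ref{lem:propagation-space-2} with integer parameter $m' := \lceil k/N\rceil$ rather than $\lfloor k/N\rfloor$, giving $\bar x \in B_{32^{-m'}}(x_0) \subset B_{\rho_0^{Nm'}}(x_0) \subset B_{\rho_0^{k}}(x_0)$ and $u(\bar x,t_0) \le L_0^{m'} m_0 \le \nu^{Nm'}m_0$; one then needs $\nu^{Nm'} \le \nu^{k}$, which fails since $Nm' \ge k$.

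Thus the genuinely correct bookkeeping goes the other way: use the \emph{smaller} value-bound by taking fewer grouped steps, and accept the spatial radius inflation. Take $m = \lfloor k/N \rfloor$; apply Lemma~\ref{lem:propagation-space-2} with parameter $m$ to get $\bar x \in B_{32^{-m}}(x_0)$ with $u(\bar x,t_0)\le L_0^{m}m_0 \le \nu^{Nm}m_0 \le \nu^{k}m_0$ (since $Nm \le k$). It remains to check $32^{-m} \le \rho_0^{k}$, i.e.\ $32^{m} \ge \rho_0^{-k} = 32^{k/N}$, i.e.\ $m \ge k/N$; since $m = \lfloor k/N\rfloor$ this fails by at most one, i.e.\ $m \ge k/N - 1$, so $32^{-m} \le 32 \cdot \rho_0^{k} = \rho_0^{k-N}$. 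To kill the stray factor $32$, I would enlarge $\rho_0$ slightly: redefine $N$ via $\nu^{N} \ge L_0$ \emph{and} simultaneously set $\rho_0 = 32^{-1/N}$, then observe $32^{-m} \le \rho_0^{k - N} \le \rho_0^{(1-\delta)k}$ for $k \ge k_0(\delta)$, and rename; alternatively, and most cleanly, choose $N$ with $\nu^{N} \ge 32\, L_0$ is not it either — the honest fix is: pick $\rho_0 \in (0,1)$ with $\rho_0 < 32^{-1/N}$ strictly (say $\rho_0 = 32^{-2/N}$), so that $32^{-m} \le 32\cdot 32^{-k/N} = 32 \cdot (\rho_0^{1/2})^{k} \le \rho_0^{k}$ once $k \ge k_0 := \big\lceil \tfrac{2\log 32}{-\log \rho_0}\big\rceil$, using $\rho_0^{k/2}\le 32^{-1}$. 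With $\rho_0 := 32^{-2/N}$ and this $k_0$, both conclusions $u(\bar x,t_0)\le \nu^k m_0$ and $\bar x \in B_{\rho_0^k}(x_0)$ hold, and $\rho_0,k_0$ depend only on $\nu$ and universal data.

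The one point requiring care — and the main obstacle — is precisely this exponent bookkeeping: matching the geometric gain $32^{-k}$ of iterated applications against the target rate $\rho_0^{k}$ while ensuring the value bound $L_0^{\lfloor k/N\rfloor}$ stays below $\nu^{k}$; the floor/ceiling discrepancies produce a bounded multiplicative error that must be absorbed either into a slightly smaller choice of $\rho_0$ or into the threshold $k_0$, as above. Everything else is a direct citation of Lemma~\ref{lem:propagation-space-2} together with the trivial inclusion $(-3,-1] \subset (-3,-1+32^{-mp}]$, which makes its hypothesis on $t_0$ automatic here.
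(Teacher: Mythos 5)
Your overall strategy is the same as the paper's: fix $N$ with $\nu^N \ge L_0$, apply Lemma~\ref{lem:propagation-space-2} at level $m=\lfloor k/N\rfloor$ (its hypothesis on $t_0$ is automatic since $(-3,-1]\subset(-3,-1+32^{-mp}]$), and use $L_0^m m_0 \le \nu^{Nm} m_0 \le \nu^k m_0$. That part is fine. The genuine error is your final choice of $\rho_0$: you need the radius $\rho_0^k$ to be at least $32^{-m}$, i.e.\ roughly at least $32^{-k/N}$, so $\rho_0$ must be taken \emph{larger} than $32^{-1/N}$ (closer to $1$), not smaller. With your choice $\rho_0=32^{-2/N}$ one has $\rho_0^k=32^{-2k/N}<32^{-k/N}\le 32^{-m}$, so the inclusion $B_{32^{-m}}(x_0)\subset B_{\rho_0^k}(x_0)$ fails for every $k$; indeed your claimed inequality $32\cdot(\rho_0^{1/2})^{k}\le\rho_0^{k}$ is equivalent to $32\le\rho_0^{k/2}$, which is impossible for $\rho_0<1$, and the auxiliary bound $\rho_0^{k/2}\le 32^{-1}$ only gives $32\,\rho_0^{k/2}\le 1$, not $\le\rho_0^k$.

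The fix is precisely the alternative you mentioned and then discarded: enlarge $\rho_0$ (this only weakens the conclusion, since the point is merely required to lie in the larger ball $B_{\rho_0^k}(x_0)$). For instance take $\rho_0=32^{-1/(2N)}$ and $k_0=2N$: for $k\ge k_0$ and $m=\lfloor k/N\rfloor\ge 1$ one gets $32^{-m}\le 32\cdot 32^{-k/N}\le 32^{-k/(2N)}=\rho_0^{k}$, because $k/(2N)\ge 1$, while the value bound $L_0^{m}m_0\le\nu^{k}m_0$ is unaffected. This is in effect what the paper does: it defines $\tilde k$ by $L_0^{\tilde k}\le\nu^{k}<L_0^{\tilde k+1}$, applies Lemma~\ref{lem:propagation-space-2} at level $\tilde k$, and chooses $\rho_0=\nu^{-\gamma}$ with $\gamma$ small, i.e.\ $\rho_0$ close to $1$, so that $\rho_0^{k}>L_0^{-\gamma(\tilde k+1)}\ge 32^{-\tilde k}$.
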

 \begin{proof}
 If $\nu \ge L_0$ then we are done by Lemma \ref{lem:propagation-space-2} and taking $\rho_0 =\tfrac1{32}.$ If $\nu <L_0,$ choose $k_0$ such that $L_0 \le \nu^{k_0} <L_0^2.$ Fix $k \ge k_0,$ and let   $\tilde k$ be  such that $L_0^{\tilde k} \le \nu^k <L_0^{\tilde k+1}.$ By Lemma \ref{lem:propagation-space-2}, there is $\bar x \in B_{32^{-\tilde k}}(x_0)$ such that 
\[
u(\bar x,t_0) \le L_0^{\tilde k}m_0.
\]
 We  choose $\rho_0(\nu) = \nu^{-\gamma}$ for a small $\gamma >0$. Then it holds by the choice of $\tilde k$ that 
\[
 \rho_0^k = \big(\nu^{-k}\big)^\gamma >  \big(L_0^{-\tilde k -1}\big)^\gamma. 
\]
By choosing $\gamma$ small enough  we have $\big(L_0^{-\tilde k -1}\big)^\gamma  \ge 32^{-\tilde k}$, and thus  $\rho_0^k > 32^{-\tilde k}.$ 
Hence, $B_{32^{-\tilde k}}(x_0) \subset B_{\rho_0^k}(x_0),$ and  therefore $\bar x \in  B_{\rho_0^k}(x_0)$ with 
\[
u(\bar x,t_0) \le L_0^{\tilde k}m_0 \le  \nu^km_0,
\]
 where the last inequality follows from the choice of $\tilde k.$ This completes the proof of the lemma.
 \end{proof}
\section{Weak Harnack}
In this section we prove Theorem \ref{thm:weak-harnack}. In order to  do this we have to prove that  the measure of the superlevel sets decay algebraically. The proof is via  induction argument, where we use a delicate covering argument to 
estimate the measure of the superlevel set $\{ u > L^k m_0 \}$. To this aim we define the following paraboloid type sets, which  can be seen as a degenerate version of the ones introduced in \cite{Wa}. 
 For a fixed point $(x_0,t_0) \in \R^{n+1}$ and positive numbers $\theta$ and $r$ we define the paraboloid type set   
\begin{equation} \label{def:B-a}
\B^{\theta}_r(x_0,t_0):=\{(x,t)  \in \R^{n+1} : \theta |x-x_0|^p \leq t-t_0 \leq r\}
\end{equation}
and denote $\B_r(x_0,t_0)$ when $\theta = 1$. We remark that we introduce  $\theta$  in the definition \eqref{def:B-a}  in order to take into acount the intrinsic scaling of the equation. Notice that  the set $\B^{\theta}_r(x_0,t_0)$ becomes flatter as the value of  $\theta$ decreases. 




The definition of  $\B^{\theta}_r$ in  \eqref{def:B-a} takes into account the nonhomogeneous scaling of the equation.   For Theorem \ref{thm:weak-harnack} we need a quantitative  estimate  of  the propagation of  boundedness in terms of measure inside the set  $\B^{\theta}_r \cap Q$. We do this  using  the basic measure estimate from  Lemma \ref{lem:ABP}  and Lemma \ref{lem:barrier}.

\begin{lemma}\label{prop:bmu}
Let \( 0 < m_0 <1 \) and \( L_0 > 1 \) be as in Lemma~\ref{lem:barrier}. Suppose \( x_0 \in B_1 \) and \( 0 < \rho \leq 1 \). Assume that \( u : Q_3^- \to \mathbb{R} \) is a nonnegative viscosity supersolution of \eqref{eq1} with
\[
\inf_{B_\rho(x_0)   \cap B_1} u(\cdot, 0) \leq m_0.
\]
Then, there exists a universal constant \( c_1 > 0 \) such that
\[
\left| \left\{ (x, t) \in \mathcal{B}_{\rho^p}(x_0,  - \rho^p) \, : \, x \in B_1 \ \text{ and } \ u(x, t) < 4L_0m_0  \right\} \right| \geq c_1 \rho^{n + p}.
\]
\end{lemma}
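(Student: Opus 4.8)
The plan is to deduce the estimate from the two facts already established — the basic measure estimate (Lemma~\ref{lem:ABP}) and the one–step propagation (Lemma~\ref{lem:barrier}) — by two successive rescalings. First I would use the hypothesis on the infimum: since $u$ is lower semicontinuous and $\overline{B_\rho(x_0)}\cap\overline{B_1}$ is compact, there is $x_*\in\overline{B_\rho(x_0)}\cap\overline{B_1}$ with $u(x_*,0)\le m_0$, so $|x_*-x_0|\le\rho$. The idea is then to transport this smallness toward $x_0$ at a time of order $-\rho^p$, and to run the ABP estimate there on a small intrinsic cylinder that fits inside $\mathcal{B}_{\rho^p}(x_0,-\rho^p)$.

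For the transport step, rescale around $x_*$: for a radius $r\simeq\rho$ small enough that $v(y,s):=u(x_*+ry,r^p s)$ is defined on $Q_3^-$ (which forces $r\le\tfrac23$, so one may take $r=\rho$ when $\rho\le\tfrac23$), Remark~\ref{rem:scaling} makes $v$ a nonnegative viscosity supersolution of \eqref{eq1} in $Q_3^-$ with $v(0,0)\le m_0$. Applying Lemma~\ref{lem:barrier} with target $z_0:=(x_0-x_*)/r\in B_{4/3}$ and $t_0=-\tfrac12$ produces $\bar z\in B_{1/32}(z_0)$ with $v(\bar z,-\tfrac12)\le L_0m_0$, i.e. a point $(x_1,t_1)$ with $x_1:=x_*+r\bar z$, $t_1:=-\tfrac{r^p}{2}$, $|x_1-x_0|<\tfrac{r}{32}$ and $u(x_1,t_1)\le L_0m_0$. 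Now rescale around $(x_1,t_1)$ \emph{with the intrinsic multiplicative factor} $M=L_0m_0$: for $a=c'\rho$ with a small universal $c'$ set
\[
w(y,s):=\frac{1}{L_0m_0}\,u\!\left(x_1+ay,\ t_1+a^p(L_0m_0)^{-(p-2)}s\right),
\]
which by Remark~\ref{rem:scaling} is a nonnegative viscosity supersolution of \eqref{eq1}, defined on $Q_1^-$ once $c'$ is small, and with $w(0,0)\le1$. Lemma~\ref{lem:ABP} gives $|\{w<4\}\cap Q_1^-|\ge c_0$; transforming back, the image set $S\subset\{u<4L_0m_0\}$ has measure $a^{n+p}(L_0m_0)^{-(p-2)}|\{w<4\}\cap Q_1^-|\ge c_0(c')^{n+p}(L_0m_0)^{-(p-2)}\rho^{n+p}=:c_1\rho^{n+p}$, which is where the power $\rho^{n+p}$ comes from (the Jacobian of the intrinsic scaling: $a^n$ in space times $a^p(L_0m_0)^{-(p-2)}$ in time).

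It then remains to check $S\subset\mathcal{B}_{\rho^p}(x_0,-\rho^p)\cap\{x\in B_1\}$. Spatially $S\subset B_{a+r/32}(x_0)\subset B_{\rho/16}(x_0)$, and in time $S$ lies in $(-\tfrac{r^p}{2}-a^p(L_0m_0)^{-(p-2)},\,-\tfrac{r^p}{2}]$, which for $c'$ small is contained in $(-\tfrac{\rho^p}{2},0]$; on this time range the paraboloid slices $B_{(t+\rho^p)^{1/p}}(x_0)$ have radius at least $(\rho^p/4)^{1/p}\ge\rho/4>\rho/16$, so $|x-x_0|^p\le t+\rho^p$ throughout $S$, i.e. $S\subset\mathcal{B}_{\rho^p}(x_0,-\rho^p)$. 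Moreover $S\subset B_1$ whenever $\operatorname{dist}(x_0,\partial B_1)\ge\rho/16$; if $x_0$ sits closer to $\partial B_1$, one instead aims $x_1$ at $\tilde x_0:=x_0-\tfrac{\rho}{2}\tfrac{x_0}{|x_0|}$, pulled toward the interior — a perturbation of size $\lesssim\rho$ still compatible with the paraboloid containment — so that the small ball around $x_1$ stays inside $B_1$.

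The hard part is exactly this geometric bookkeeping: the two rescaling radii must be chosen so that \emph{simultaneously} the first rescaled function lives on $Q_3^-$, the second on $Q_1^-$, the single transport step of Lemma~\ref{lem:barrier} reaches from $x_*$ (which may lie at distance $\rho$ from $x_0$, even on $\partial B_1$) close enough to $x_0$, and the resulting intrinsically scaled cylinder lands inside the prescribed paraboloid. Because the scaling is nonhomogeneous, a cylinder that fits inside $\mathcal{B}_{\rho^p}(x_0,-\rho^p)$ spatially need not fit temporally — the time window is dilated by the universal factor $(L_0m_0)^{-(p-2)}$ — so these constraints genuinely interact and force $r\simeq a\simeq\rho$. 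Finally, for $\rho$ close to $1$ a single application of Lemma~\ref{lem:barrier} (whose spatial reach is at most $\tfrac43 r$ with $r\le\tfrac23$) may not reach all the way to $x_0$; one then chains a universally bounded number of transport steps, renormalizing between steps via Remark~\ref{rem:scaling}, which only enlarges the universal constants and is therefore harmless.
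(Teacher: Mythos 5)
Your overall architecture is the same as the paper's: locate $x_*$ with $u(x_*,0)\le m_0$, use one application of Lemma~\ref{lem:barrier} (after a spatial rescaling) to transport smallness to a point near $x_0$ at time $\approx-\rho^p/2$, then apply Lemma~\ref{lem:ABP} there with the intrinsic scaling $M=L_0m_0$, and check that the resulting small intrinsic cylinder sits inside $\mathcal{B}_{\rho^p}(x_0,-\rho^p)\cap(B_1\times\R)$. For $\rho\le\tfrac23$ your version of the bookkeeping essentially goes through (the stated containment of the time interval in $(-\tfrac{\rho^p}{2},0]$ is literally false, but what you actually use, $t+\rho^p\ge\tfrac{\rho^p}{4}$ and $t\le0$, is correct).

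The genuine gap is the regime $\rho$ close to $1$, exactly the point you flag and then dismiss. Since $x_*$ may lie on $\partial B_1$, the requirement that $v(y,s)=u(x_*+ry,r^ps)$ be defined on $Q_3^-$ caps $r\le\tfrac23$, and then your target (either $x_0$ itself, at distance up to $\rho$, or the inward-shifted $\tilde x_0$, possibly farther) can have scaled distance up to $\tfrac{3\rho}{2}>\tfrac43$, outside the reach of a single application of Lemma~\ref{lem:barrier}. Your proposed rescue by chaining a bounded number of transport steps is \emph{not} harmless here: each step multiplies the known small value by $L_0$, so after $N\ge2$ steps the ABP stage produces a set where $u<4L_0^Nm_0$, i.e.\ you prove the estimate for a larger threshold than the $4L_0m_0$ in the statement (where $L_0$ is the fixed constant of Lemma~\ref{lem:barrier}), which is a strictly weaker lemma. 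The missing idea is that you need not land within $O(\rho/32)$ of $x_0$ at all: the paraboloid slice at time $\approx-\rho^p/2$ has radius $\approx(\tfrac45)^{1/p}\rho$, so an accuracy of order $\tfrac34\rho$ suffices. The paper exploits this by rescaling with radius $\rho/2$ about $x_m$ and aiming at the intermediate point $z=(\tfrac12-\tfrac{\rho}{16})(x_m+x_0)$, which is simultaneously within the $\tfrac43$-reach ($\tfrac{2}{\rho}|z-x_m|\le\tfrac54$), within $\tfrac34\rho$ of $x_0$, and at distance $\ge\tfrac{\rho}{8}$ from $\partial B_1$ — so one single application of Lemma~\ref{lem:barrier} handles all $\rho\le1$ and the boundary case at once, preserving the constant $4L_0$. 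Without this (or an equivalent single-step device), your proof does not establish the lemma as stated.
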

\begin{proof}
Our first aim is to find a point $(z_1,t_1)$ which is well inside the set  $ \mathcal{B}_{\rho^p}(x_0,  - \rho^p)$ and  $u(z_1,t_1) \leq L_0 m_0$. We need a few intermediate steps to find it.   The result then follows by applying  the basic measure estimate from  Lemma~\ref{lem:ABP} translated to $(z_1,t_1)$.  

 First, let $x_m \in   \bar{B}_{\rho}(x_0) \cap \bar{B}_1 $ be a point where the infimum 
$ \inf_{B_\rho(x_0) \cap B_1} u(\cdot, 0)$
is attained, i.e., $u(x_m, 0) \leq m_0$. Consider the point
\[
z = \left( \frac{1}{2} - \frac{\rho}{16} \right)(x_m + x_0).
\]
Then we have $|z| < 1 - \tfrac{\rho}{8}$. Moreover, we claim that 
\begin{equation}\label{eq:bmu1}
z \in B_{5\rho/8}(x_0) \qquad \text{and} \qquad z \in B_{5\rho/8}(x_m).
\end{equation}
Indeed, we may estimate
\[
|z - x_0| = \left| \frac{1}{2}(x_m - x_0) - \frac{\rho}{16}(x_0 + x_m) \right| 
\leq \frac{1}{2}|x_m - x_0| + \frac{\rho}{16}|x_0 + x_m|.
\]
Since $|x_m - x_0| \leq \rho$ and $|x_0 + x_m| < 2$, it follows that $|z - x_0| < \frac{5\rho}{8}$.
Similarly we have $z \in B_{5\rho/8}(x_m)$ by symmetry.

 Consider the function $v: Q_3^- \to \R$ defined as
\[
v(y, \tau) = u(\tfrac{\rho}{2} y + x_m, \tfrac{\rho^p}{2^p} \tau).
\]
Then $v$ is a nonnegative supersolution of \eqref{eq1} with $v(0,0) \le m_0.$ Using the second estimate in \eqref{eq:bmu1}, we conclude $\tfrac{2}{\rho}|z-x_m| \le \tfrac54<\tfrac43$. We may thus  apply  Lemma \ref{lem:barrier} to deduce that  there exists a point $y_m \in B_{1/32}\big(\tfrac{2}{\rho}(z-x_m)\big)$ such that 
\[
v(y_m,-\tfrac12) \le L_0m_0.
\]
Equivalently, in the original coordinates, we have
\[
u(\tfrac{\rho}{2} y_m + x_m, -\tfrac{\rho^p}{2^{p+1}} ) \le L_0m_0.
\]
Finally we choose  $(z_1,t_1) = (\tfrac{\rho}{2} y_m + x_m, -\tfrac{\rho^p}{2^{p+1}})$ and claim  that it holds 
\begin{equation}\label{eq:bmu2}
(z_1, t_1) +Q^-_{\rho/32}\subset \mathcal{B}_{\rho^p}(x_0,  - \rho^p).
\end{equation}

To this aim we fix  $(x,t) \in (z_1, t_1) + Q^-_{\rho/32}$. We first estimate  
\[
|x-x_0| \le |x-z_1| +|z_1-x_0| < \frac{\rho}{32} +|z_1-x_0|. 
\]
Using $y_m \in B_{1/32}\big(\tfrac{2}{\rho}(z-x_m)\big)$ and $z \in B_{5\rho/8}(x_0)$ provided by \eqref{eq:bmu1}, we obtain
\[
\begin{split}
 |z_1-x_0| &= \left|\frac{\rho}{2} y_m + x_m -x_0\right| = \left|\frac{\rho}{2}\left(y_m-\frac{2}{\rho}(z-x_m)\right)+z-x_0\right|\\
 &< \frac{\rho}{64} + \frac{5\rho}{8} < \frac{3\rho}{4}.
\end{split}
\]
In conclusion we have
\begin{equation}\label{eq:bmu3}
|x-x_0| <  \frac{\rho}{32} + \frac{3\rho}{4}<\frac{4}{5}\rho.
\end{equation}
Let us then estimate the time variable of the point $(x,t) \in (z_1, t_1) + Q^-_{\rho/32}$. Using  $t_1 = -\tfrac{\rho^p}{2^{p+1}} $ and $p>2$ we have 
\[
t+\rho^p = (t-t_1)+t_1+ \rho^p  \ge -\frac{\rho^p}{32^p}-\frac{\rho^p}{2^{p+1}} +\rho^p > \frac{4}{5}\rho^p.
\]
Moreover it holds $t\leq0$. Therefore, we conclude from \eqref{eq:bmu3} that 
\[
|x-x_0|^p < \left(\frac{4}{5}\rho\right)^p < \frac{4}{5}\rho^p  < t +\rho^p \leq \rho^p.
\]
Hence, $(x,t) \in \mathcal{B}_{\rho^p}(x_0, - \rho^p)$ and \eqref{eq:bmu2} follows.

Recall that $u(z_1,t_1)\leq L_0m_0$.  We define the function $w :Q_3^- \to \R$,
\[
w(x,t)=\frac{u(\tfrac{\rho}{32} x+z_1,(\tfrac{\rho}{32})^p (L_0m_0)^{-(p-2)}t +t_1 )}{L_0m_0}. 
\]
By Remark \ref{rem:scaling}  $w$ is a nonnegative supersolution of \eqref{eq1} and $w(0,0) \le 1$.
We apply Lemma~\ref{lem:ABP} to conclude 
 \[
|\{(x,t) \in Q_1^- \, : \, w(x,t) < 4\}| \geq c_0.
\]
Going back to the original coordinates and using $Q_{\rho/32}^- ((L_0m_0)^{-(p-2)}) \subset Q_{\rho/32}^-$ we find 
\begin{equation}\label{eq:bmu4}
\big|\{(x,t) \in (z_1,t_1)+Q_{\rho/32}^- \, : \, u(x,t) < 4L_0m_0\}\big| \geq c_0 \left(\frac{\rho}{32}\right)^{n+p}(L_0m_0)^{-(p-2)}.
\end{equation}

Let us next show that  every point $(x,t)$ in $ (z_1,t_1)+Q_{\rho/32}^-$ satisfies $x \in B_1$.  Recall that $|z| \le 1-\tfrac{\rho}{8}$ and $y_m \in B_{1/32}(\tfrac{2}{\rho}(z-x_m)).$ These imply
\[
|z_1| =\left|\frac{\rho}{2} y_m + x_m + z-z \right| \le \frac{\rho}{2}\left| y_m-\frac{2}{\rho}(z-x_m)\right| +|z| \le \frac{\rho}{64} +1-\frac{\rho}{8} \le 1-\frac{\rho}{16}.
\] 
Consequently, for $(x,t) \in (z_1,t_1)+Q_{\rho/32}^-$, we have  $|x| < |z_1|+\frac{\rho}{32} \le 1-\frac\rho{32} <1$. Hence, we deduce from \eqref{eq:bmu2} and  \eqref{eq:bmu4}  that 
 \[
\big|\{(x,t) \in \B_{\rho^p}(x_0,-\rho^p) \, : \, x \in B_1 \quad \text{and}\quad  u(x,t) < 4L_0m_0\}\big| \geq c_0 \left(\frac{\rho}{32}\right)^{n+p}(L_0m_0)^{-(p-2)}
\] 
 and the claim follows. 
\end{proof}

We  need the following simple modification of the Vitali covering argument. We give its proof in the Appendix for the reader's convinience. 

\begin{prop}\label{prop:vitali}
Let $\theta \in (0,1)$ and assume $\mathcal{F}= \{ Q_i\}_{i}$ is a family of homothetic cylinders such that 
\[
Q_i = (x_i,t_i)  +  \bar Q_{\rho_i}^+(\theta)
\]
for $(x_i,t_i) \in \R^{n+1}$ and $\rho_i \in (0,1]$. Then there is a countable  subcover  $ \tilde \F= \{ Q_k\}_{k }$ such that $Q_k \in \tilde \F$ are disjoint and
\[
\bigcup_{Q_i \in \mathcal{F}}  Q_i  \subset  \bigcup_{Q_k \in  \tilde \F}  5 Q_k.
\]
Here $5 Q_k := (x_i,t_i)  +  \bar Q_{5\rho_i}(\theta)$.
\end{prop}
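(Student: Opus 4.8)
The plan is to mimic the classical $5r$-covering lemma, but keeping track of the fact that all cylinders in $\mathcal{F}$ share the same parameter $\theta$, so that a cylinder is determined by its "spatial radius" $\rho_i$, its apex $(x_i,t_i)$, and the universal $\theta$. First I would reduce to the case where the radii are bounded, which is already given ($\rho_i\in(0,1]$), and then stratify $\mathcal{F}$ into generations $\mathcal{F}_j = \{Q_i\in\mathcal{F} : 2^{-j-1} < \rho_i \le 2^{-j}\}$ for $j\ge 0$. Within each generation I would pick a maximal pairwise-disjoint subfamily $\tilde{\mathcal{F}}_j$ (using Zorn's lemma, or just a greedy choice since the radii in one generation are comparable), and then choose $\tilde{\mathcal{F}} = \bigcup_j \tilde{\mathcal{F}}_j$ after the usual inductive pruning: when forming $\tilde{\mathcal{F}}_j$ one only considers cylinders in $\mathcal{F}_j$ that are disjoint from all cylinders already selected in $\tilde{\mathcal{F}}_0,\dots,\tilde{\mathcal{F}}_{j-1}$. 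Countability of $\tilde{\mathcal{F}}$ follows because the selected cylinders are disjoint and each has positive $(n+1)$-dimensional measure bounded below within a bounded region (or one may note the family of apexes is a separated set).

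The heart of the argument is the containment $\bigcup_{Q_i\in\mathcal{F}} Q_i \subset \bigcup_{Q_k\in\tilde{\mathcal{F}}} 5Q_k$. Fix $Q_i\in\mathcal{F}$, say $Q_i\in\mathcal{F}_j$. By maximality of the selection, $Q_i$ must intersect some already-chosen $Q_k\in\tilde{\mathcal{F}}$ with $Q_k\in\tilde{\mathcal{F}}_{j'}$ for some $j'\le j$; hence $\rho_k > 2^{-j'-1} \ge 2^{-j-1} \ge \rho_i/2$, i.e. $\rho_i \le 2\rho_k$. The key geometric claim is then: if $Q_i$ and $Q_k$ are homothetic $\bar Q^+(\theta)$-cylinders with $Q_i\cap Q_k\neq\emptyset$ and $\rho_i\le 2\rho_k$, then $Q_i\subset 5Q_k = (x_k,t_k)+\bar Q_{5\rho_k}(\theta)$. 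To see this, take $(x,t)\in Q_i$ and a common point $(y,s)\in Q_i\cap Q_k$. For the spatial coordinate, $|x-x_k|\le |x-x_i|+|x_i-y|+|y-x_k| \le \rho_i+\rho_i+\rho_k \le 5\rho_k$. For the time coordinate one uses that in a $\bar Q^+$-cylinder with parameter $\theta$ the time interval above the apex has length $\theta\rho^p$, so $t, s \in [t_i, t_i+\theta\rho_i^p)$ gives $|t - t_i|<\theta\rho_i^p$ and $|s-t_i|<\theta\rho_i^p$, while $s\in[t_k,t_k+\theta\rho_k^p)$; combining, $t - t_k = (t-t_i)+(t_i - s)+(s-t_k)$ lies in $(-\,\theta\rho_i^p,\ \theta\rho_i^p+\theta\rho_k^p) \subset (-\theta(5\rho_k)^p,\ \theta(5\rho_k)^p)$ since $\rho_i\le 2\rho_k$ forces $\theta\rho_i^p \le 2^p\theta\rho_k^p \le 5^p\theta\rho_k^p$ and likewise $\theta\rho_i^p+\theta\rho_k^p\le(2^p+1)\theta\rho_k^p \le 5^p\theta\rho_k^p$. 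Thus $(x,t)\in (x_k,t_k)+\bar Q_{5\rho_k}(\theta) = 5Q_k$, which proves the claim and hence the proposition.

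The step I expect to be the mildly delicate one is the bookkeeping in the time variable: because the cylinders are one-sided ($\bar Q^+$) and the apexes $t_i$ need not be aligned, one must be careful that the enlargement to the two-sided $\bar Q_{5\rho_k}(\theta)$ genuinely absorbs the temporal displacement. This is exactly why the conclusion uses the symmetric cylinder $5Q_k = (x_k,t_k)+\bar Q_{5\rho_k}(\theta)$ rather than $\bar Q^+_{5\rho_k}(\theta)$: the point $(x,t)\in Q_i$ can lie at a time level below $t_k$ by up to $\theta\rho_i^p$, so a one-sided enlargement would not suffice. Everything else — maximality, countability, the spatial triangle inequality — is routine. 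I would present the generation-based greedy selection carefully and then isolate the geometric containment claim as the single computational lemma inside the proof. Since the statement is already deferred to the Appendix, the write-up there can afford to spell out these elementary estimates in full.
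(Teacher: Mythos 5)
Your proposal is correct and follows essentially the same route as the paper's Appendix proof: the dyadic stratification by radius $2^{-j-1}<\rho_i\le 2^{-j}$, the inductive maximal disjoint selection within each generation pruned against earlier generations, and the same triangle-inequality estimates $|x-x_k|\le 2\rho_i+\rho_k\le 5\rho_k$ and $|t-t_k|\le\theta\rho_i^p+\theta\rho_k^p\le\theta(5\rho_k)^p$ using $\rho_i\le 2\rho_k$. The only cosmetic difference is that the paper secures countability up front via the Lindel\"of property of $\R^{n+1}$, whereas you note it afterwards for the disjoint subfamily; both are fine.
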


\begin{proof}[\textbf{Proof of Theorem \ref{thm:weak-harnack}}]
Let $m_0 >0$ be as in Lemma  \ref{lem:barrier} and set $c = m_0$.  We note that by translating and scaling  the coordinates  we may assume that $(x_0, t_0) = (0,0)$ and $\rho =1$. Moreover, in order to have the intrinsic cylinder $Q$ of size one, we scale the function such that  $u(0,0) =m_0$. Indeed, the latter follows by considering the function 
\[
v(x,t) = \frac{u(x,(m/m_0)^{-(p-2)} t)}{m/m_0},
\]
 where $m = u(0,0)$.

 In view of the layer-cake formula, it is enough to prove that there are universal constants $C>1$, $\eta \in (0,1)$ and $k_1 \in  \mathbb{N}$ such that  
\begin{equation}
\label{eq:weak-h-1}
|\{(x,t)\in B_1 \!\times \! (-2,-1] : \, u(x,t) > L^km_0\}| \leq C \eta^k, \qquad \text{for all }\, k \geq k_1,
\end{equation}
where $L=4L_0$ and $L_0>1$ is from Lemma \ref{lem:barrier}. 

Choose $\gamma>1$ such that 
\begin{align}\label{eq:wh-gamma}
\gamma L^{-(p-2)}+1 \le \gamma 
\end{align}
and then  $k_1 \in  \mathbb{N}$ such that $\gamma L^{-k_1(p-2)} \leq \frac18$. We define shrinking cylinders $Q_{k+1} \subset Q_k$ as  
\[
Q_k=B_1 \!\times\! (-2,-1+ \gamma L^{-k(p-2)}]
\]
and notice that  $B_1 \times\! (-2,-1]  \subset Q_k  \subset B_1 \times\! (-2,-\tfrac12]$
 for all $k \geq k_1$. For every $k\geq k_1$ we define the sublevel set 
\[
A_k=\{(x,t)\in Q_k : \, u(x,t) \leq L^km_0\} 
\]
and the associated intrinsic scaling
\[
a_k := L^{-k(p-2)}.
\]

 For every point $(x_0,t_0) \in Q_{k+1} \setminus A_k$, we define $r>0$ as the smallest number for which the intersection $\B^{a_k}_r(x_0,t_0)\cap A_k$ is non-empty, where $\B^{a_k}_r(x_0,t_0)$ is defined in \eqref{def:B-a}. We will later show that thanks to  Lemma \ref{lem:barrier}  such a radius $r$ exists.   We define the cylinder $C_{(x_0,t_0)}$ as the smallest cylinder that contains  $\B^{a_k}_r(x_0,t_0)$. It is easy to see that by defining the radius 
\begin{equation}\label{def:radius-k}
\rho_k = \rho_k(x_0,t_0) := \left( \frac{r}{a_k}\right)^{1/p},
\end{equation}
we may write $C_{(x_0,t_0)}$ as 
\[
C_{(x_0,t_0)} =  (x_0,t_0)  +  \bar Q_{\rho_k}^+(a_k),
\]
where $Q_\rho^+(a_k)$ is the homothetic cylinder defined in \eqref{def:cylinders1}. This means that the interior of the set $\B^{a_k}_r(x_0,t_0)$ is contained in $\R^{n+1} \setminus A_k$ and there is a point of intersection  $(\hat x, \hat t)  \in \B^{a_k}_r(x_0,t_0) \cap A_k$ with $\hat t = t_0 +r$. In other words, we have 
\begin{align}\label{eq:wh_inf}
\inf_{B_{\rho_k}(x_0) \cap B_1}u(\cdot, t_0+a_k \rho_k^p) \le L^k m_0.
\end{align} 
In this way, we obtain a family of cylinders $\mathcal{F} = \{C_{(x,t)}\}_{(x,t) \in Q_{k+1} \setminus A_k } $ which  is a cover of  the set $Q_{k+1} \setminus A_k $.

Let  $(x_0,t_0) \in Q_{k+1} \setminus A_k$, and let $C_{(x_0,t_0)}$ and $\rho_k$ be as above.  Recall that $\rho_k$, defined in \eqref{def:radius-k}, is the spatial width of the cylinder $C_{(x_0,t_0)}$ and it crucial that we  bound it from above. To this aim,   notice first   that by the definitions of $Q_{k+1}$ and $a_k$, and by \eqref{eq:wh-gamma}
\begin{equation}\label{eq:weakharn-2}
\begin{split}
  t_0+ a_k &\le -1+\gamma L^{-(k+1)(p-2)} +L^{-k(p-2)}  \\
&=-1 + L^{-k(p-2)}(\gamma L^{-(p-2)} +1)  \le -1 + \gamma L^{-k(p-2)}  .
\end{split}
\end{equation}
We choose $(x_1,t_1)$ with $x_1 = \frac{x_0}{2}$ and $t_1 = t_0+ a_k$. By \eqref{eq:weakharn-2} and by the choice of $k_1$  we have $t_1 \leq  -\tfrac{1}{2}$, and from $x_0 \in B_1$, it follows  $|x_1| <\tfrac12$. 
Since $u(0,0)\leq m_0$, we may apply Lemma  \ref{lem:barrier} to find a point   $\bar x \in B_{1/32}(x_1)$ such that 
\[
u(\bar x, t_1) \leq L_0m_0.
\]  
We use $\bar x \in B_{1/32}(x_1)$,  the choice of $x_1$, and the fact that $x_0 \in B_1$, to estimate
\[
|\bar x - x_0| \leq |\bar x - x_1 | + |x_1 - x_0| < \frac{1}{32} + \big| \frac{x_0}{2} - x_0 \big| < \frac{1}{32} + \frac12 <1, 
\]
which implies  $\bar x \in B_1(x_0)\cap B_1$.  Since $u(\bar x, t_1) \leq L_0m_0$,  it  holds, by recalling that $t_1 = t_0+ a_k$  and using \eqref{eq:weakharn-2}, that $(\bar x, t_1) \in A_k$. Therefore, we conclude that the cylinder $(x_0,t_0)  +  \bar Q_{1}^+(a_k)$
intersects the set $A_{k}$. Hence, we have
\begin{equation}\label{eq:radius-bounded}
 \rho_k  \le  1. 
\end{equation}

Let  $(x_0,t_0) \in Q_{k+1} \setminus A_k$  and consider the cylinder $C_{(x_0,t_0)} $ constructed above. We claim that there is a universal constant $c_2>0$ such that 
\begin{equation}\label{eq:good-measure}
| \tilde A_{k+1} \cap C_{(x_0,t_0)}  \setminus A_k| \geq c_2 |C_{(x_0,t_0)} |,
\end{equation}
where
\[
\tilde A_{k+1}=\{(x,t)\in Q_k : \, u(x,t) \leq L^{k+1}m_0\}.
\]
To this aim we consider the function $v:Q_3^- \to \R$, 
\[
v(y,\tau)= \frac{ u(y,a_k \tau+t_0+a_k\rho_k^p)}{L^k}.
\]
Then, by Remark \ref{rem:scaling}, $v$ is a nonnegative supersolution of \eqref{eq1} and by \eqref{eq:wh_inf}, we have 
\[
\inf_{B_{\rho_k}(x_0) \cap B_1} v(\cdot, 0) \le m_0.
\]
Moreover, by \eqref{eq:radius-bounded}, it holds $\rho_k \le 1$. In conclusion,  we may apply Lemma \ref{prop:bmu} to get
\[
\left| \left\{ (y, \tau) \in \mathcal{B}_{\rho_k^p}(x_0, - \rho_k^p) \, : \, y \in B_1 \ \text{ and } \ v(y, \tau) < 4 L_0m_0 \right\} \right| \geq c_1 \rho_k^{n + p},
\]
where $\mathcal{B}_{\rho_k^p}(x_0, - \rho_k^p)$ is defined in \eqref{def:B-a}.  Writing this in the  original coordinates and recalling that $4L_0=L$, we find
\begin{equation}\label{eq:weakharn-1}
\left| \left\{ (x, t) \in \mathcal{B}^{a_k}_r(x_0, t_0) \, : \, x \in B_1 \ \text{ and } \ u(x, t) <  L^{k+1}m_0 \right\} \right| \geq c_1 a_k\rho_k^{n + p}.
\end{equation}

Recall that $(x_0,t_0) \in Q_{k+1}$ and we constructed the paraboloid $\mathcal{B}^{a_k}_r(x_0, t_0)$ such that its interior   is contained in $\R^{n+1} \setminus A_k$. 
Let us finally show that for every $(x,t) \in \mathcal{B}^{a_k}_r(x_0, t_0)$ with $x \in B_1$ it holds $(x,t) \in Q_k$. We need only to show that $t \leq -1 + \gamma L^{-k(p-2)}$.  We trivially have 
$t-t_0 \leq r$, and  \eqref{def:radius-k} and  \eqref{eq:radius-bounded} imply $r \leq a_k$. We then obtain from \eqref{eq:weakharn-2} that 
\[
t = t_0 + (t - t_0) \leq t_0 + a_k \leq  -1 + \gamma L^{-k(p-2)}  .
\] 
Hence, $(x,t) \in Q_k$. We thus conclude from \eqref{eq:weakharn-1} that 
\[
\left|  \mathcal{B}^{a_k}_r(x_0, t_0) \cap \tilde A_{k+1} \setminus A_k  \right| \geq c_1 a_k\rho_k^{n + p}.
\]
Since $\mathcal{B}^{a_k}_r(x_0, t_0) \subset C_{(x_0,t_0)}$ and $|C_{(x_0,t_0)}|=\omega_n a_k\rho_k^{n + p}$, where $\omega_n$ is the measure of $n$-dimensional unit ball,  we trivially have
\[
\left| C_{(x_0,t_0)} \cap \tilde A_{k+1} \setminus A_k \right| \geq \frac{c_1}{\omega_n}  |C_{(x_0,t_0)}|.
\]
This completes the proof of   \eqref{eq:good-measure}.

We now prove \eqref{eq:weak-h-1} from \eqref{eq:good-measure}. Recall that  our family of cylinders  $\mathcal{F} = \{C_{(x,t)}\}_{(x,t) \in Q_{k+1} \setminus A_k } $  is a cover of  $ Q_{k+1} \setminus A_k $. We use the version of the Vitali covering theorem stated in Proposition \ref{prop:vitali} to obtain a countable  subcover $\mathcal{F}_1 = \{C_{(x_i,t_i)}\}_{i} $, where the cylinders are pairwise disjoint and $\{5 \, C_{(x_i,t_i)}\}_{i} $ still cover $Q_{k+1} \setminus A_k $. Therefore, by using \eqref{eq:good-measure} we have that   
\[
\begin{split}
|Q_{k+1} \setminus A_k | & \leq \sum_{i} |5 \, C_{(x_i,t_i)} |\leq C \sum_{i} |C_{(x_i,t_i)}| \\
&\leq C  \sum_{i} |\tilde A_{k+1} \cap  C_{(x_i,t_i)}  \setminus A_k| \leq C |A_{k+1}   \setminus A_k| +C L^{-k(p-2)},
\end{split}
\]
where the last  inequality follows from the fact that the cylinders are disjoint and $|\tilde A_{k+1} \setminus  A_{k+1}| \le C L^{-k(p-2)}.$ Thus we have
\[
\begin{split}
|Q_{k+1} \setminus A_{k+1} | &=  |Q_{k+1} \setminus A_{k}| - |A_{k+1} \setminus A_{k}|\\
& \leq \left(1 - \frac{1}{C}\right) |Q_{k+1} \setminus A_k |+ C L^{-k(p-2)}\\
& \le \left(1 - \frac{1}{C}\right) |Q_{k} \setminus A_k | +C L^{-k(p-2)},
\end{split}
\]
where in the last inequality, we have used $Q_{k+1} \subset Q_k.$
This gives \eqref{eq:weak-h-1} by iteration argument and  completes the proof of the theorem.

\end{proof}
We now write the following corollary of the weak  Harnack inequality. 
\begin{cor}\label{Cor:bmu1}
Let $0<m_0<1$ be as in Lemma \ref{lem:barrier} and assume that $u$ is  a nonnegative viscosity supersolution  of \eqref{eq1} in $Q_3^-.$ There exists a universal constant $L_1$ such that if $u(0,0) \le m_0$, then  
\begin{equation}\label{eq:super-estimate}
\big|\{ (x,t) \in  B_{1} \!\times\! (-2, -1] :  u(x,t) \geq L_1 m_0 \}\big|  \leq  \frac{\omega_n}{4^{n+1}},
\end{equation}
where $\omega_n$ is $n$-dimensional Lebesgue measure of $B_1\subset \R^n$.
\end{cor}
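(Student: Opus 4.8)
The plan is to derive Corollary~\ref{Cor:bmu1} directly from Theorem~\ref{thm:weak-harnack} applied at the reference point $(0,0)$, together with Chebyshev's inequality. The key observation is that, after the scaling normalization made at the start of the proof of Theorem~\ref{thm:weak-harnack}, the hypothesis $u(0,0)\leq m_0$ means we may as well work with the function rescaled so that $u(0,0)=m_0$ (replacing $u$ by $v(x,t)=u(x,(m/m_0)^{-(p-2)}t)\cdot m_0/m$ with $m=u(0,0)$, exactly as in that proof; since $m\le m_0$ the time-rescaled cylinder only shrinks in the relevant direction, so the estimate transfers back). So I would first reduce to the case $u(0,0)=m_0$.

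Next I would apply Theorem~\ref{thm:weak-harnack} with $(x_0,t_0)=(0,0)$ and $\rho$ chosen so that the intrinsic cylinder $Q$ produced by the theorem contains $B_1\times(-2,-1]$. With $\theta=(c/u(0,0))^{p-2}=(m_0/m_0)^{p-2}\cdot(c/m_0)^{p-2}$, and $c=m_0$, we get $\theta=1$; then $Q=(0,-\theta\rho^p)+Q_\rho^-(\theta)=B_\rho\times(-2\rho^p,-\rho^p]$, so taking $\rho=1$ gives $Q=B_1\times(-2,-1]$ exactly. One must check the cylinder $(0,0)+Q_{3}^-(1)=Q_3^-$ lies in the domain, which is precisely the standing hypothesis $u:Q_3^-\to\R$. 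Theorem~\ref{thm:weak-harnack} then yields
\[
\left(\frac{1}{|Q|}\int_{Q} u^\eps\right)^{1/\eps}\le C\,u(0,0)=C\,m_0
\]
for universal $\eps,C$. Since $|Q|=|B_1\times(-2,-1]|=\omega_n$, this reads $\int_{Q}u^\eps\le C^\eps\omega_n\,m_0^\eps$.

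Finally I would apply Chebyshev: for any level $\Lambda_0 m_0$,
\[
\big|\{(x,t)\in Q: u(x,t)\ge \Lambda_0 m_0\}\big|\le \frac{1}{(\Lambda_0 m_0)^\eps}\int_Q u^\eps \le \frac{C^\eps\omega_n}{\Lambda_0^\eps}.
\]
Choosing the universal constant $L_1$ so that $C^\eps/L_1^\eps\le 4^{-(n+1)}$, i.e. $L_1=4^{(n+1)/\eps}C$, gives the claimed bound $\omega_n/4^{n+1}$ with $Q=B_1\times(-2,-1]$. I do not anticipate a genuine obstacle here; the only points requiring care are (i) bookkeeping the scaling normalization so that the hypothesis $u(0,0)\le m_0$ (an inequality) rather than $u(0,0)=m_0$ still delivers the estimate on the fixed cylinder $B_1\times(-2,-1]$ — this works because shrinking the value only shrinks $\theta$ and hence makes the time-cylinder flatter, while the spatial ball $B_1$ and the conclusion's measure set are unaffected after rescaling back — and (ii) verifying that with $c=m_0$ and the normalization the intrinsic cylinder $Q$ from Theorem~\ref{thm:weak-harnack} really coincides with (or contains) $B_1\times(-2,-1]$, which is the elementary computation above. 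The substantive content is entirely in Theorem~\ref{thm:weak-harnack}; this corollary is just the passage from an integral $L^\eps$ bound to a distributional bound.
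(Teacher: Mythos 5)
Your Chebyshev argument from the statement of Theorem \ref{thm:weak-harnack} is fine in the normalized case $u(0,0)=m_0$ (with $c=m_0$, $\rho=1$, $\theta=1$, $Q=B_1\times(-2,-1]$, $|Q|=\omega_n$, and $L_1=4^{(n+1)/\eps}C$). The genuine gap is in the reduction from $u(0,0)\le m_0$ to $u(0,0)=m_0$, and the monotonicity you invoke is backwards. Since $p>2$, the intrinsic scaling is $\theta=(m_0/u(0,0))^{p-2}$, so a \emph{smaller} value at the origin gives a \emph{larger} $\theta$, i.e.\ a taller intrinsic cylinder, not a flatter one. Concretely, if $m=u(0,0)<m_0$, applying Theorem \ref{thm:weak-harnack} directly to $u$ with $\rho=1$ would require $u$ to be defined on $B_3\times(-\theta 3^p,0]\supsetneq Q_3^-$, which is not part of the hypothesis, and its conclusion cylinder would be $B_1\times(-2\theta,-\theta]$, not $B_1\times(-2,-1]$. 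Equivalently, your rescaled function $v(x,t)=\tfrac{m_0}{m}\,u\big(x,(m/m_0)^{-(p-2)}t\big)$ is defined only on $B_3\times\big(-(m/m_0)^{p-2}3^p,0\big]$, a strictly shorter cylinder than $Q_3^-$; to fit the hypothesis of the theorem you are forced to take $\rho\le (m/m_0)^{(p-2)/p}$, and undoing the scaling then controls $u$ only on $B_\rho\times(-2,-1]$ with $\rho$ possibly arbitrarily small, which says nothing about the measure of $\{u\ge L_1m_0\}$ in the full set $B_1\times(-2,-1]$ demanded by \eqref{eq:super-estimate}. So the claim that ``the estimate transfers back'' with the spatial ball unaffected does not hold, and the inequality case $u(0,0)<m_0$ is exactly the case needed later (e.g.\ in Lemma \ref{prop:subsolution}).

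The gap is repairable without changing your strategy: the extremal inequality \eqref{eq1} involves only derivatives, so it is invariant under adding constants; replacing $u$ by $w=u+\big(m_0-u(0,0)\big)\ge u$ gives a nonnegative supersolution with $w(0,0)=m_0$ exactly, Theorem \ref{thm:weak-harnack} applies to $w$ on $Q_3^-$ with $\theta=1$, and Chebyshev for $w$ dominates the superlevel sets of $u$. For comparison, the paper takes a more direct route: it quotes the level-set decay \eqref{eq:weak-h-1} established inside the proof of Theorem \ref{thm:weak-harnack} (whose derivation only uses $u(0,0)\le m_0$, never equality) and simply chooses $k_1$ with $C\eta^{k_1}\le \omega_n 4^{-(n+1)}$ and $L_1=L^{k_1}$, thereby avoiding any renormalization at the origin.
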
 
\begin{proof}
The estimate  \eqref{eq:weak-h-1} in the proof of Theorem \ref{thm:weak-harnack} reads as
\begin{equation}
\big|\{ (x,t) \in  B_{1}\! \times\!(-2, -1] :  u(x,t) \geq L^k m_0 \}\big|  \leq C \eta^k.
\end{equation}
Choose $k_1$ large enough such that $C\eta^{k_1} \le \frac{\omega_n}{4^{n+1}}$. The claim follows by choosing $ L_1 = L^{k_1}.$
\end{proof}

\section{Harnack inequality}

By coupling the propagation of boundedness from  Lemma \ref{lem:propagation-space-3}  with the measure estimate from Corollary \ref{Cor:bmu1}, we are able to prove the Harnack inequality by adapting the  argument from \cite{cc, W1}.  
\begin{lemma}
\label{prop:subsolution}
Let $0 < m_0 < 1$ and $L_1 > 1$ be as in Lemma~\ref{lem:barrier} and Corollary~\ref{Cor:bmu1}, respectively. Assume that $u: Q_3^- \to \mathbb{R}$ is a nonnegative viscosity supersolution of~\eqref{eq1} and  subsolution of~\eqref{eq2}, with $u(0,0) \leq m_0$. Then there exists a universal constant $k_1 \in \mathbb{N}$ with the following property: if $(x_0, t_0) \in B_{4/3} \!\times\! (-3, -1]$ and
\begin{align}\label{eq:sub-Asum}
u(x_0, t_0) \geq 2 \nu^{k-1} L_1 m_0,
\end{align}
for $\nu = \frac{L_1}{L_1 - 1/2}$ and $k \geq k_1$, then
\[
\sup_{(x_0, t_0)+Q^-_{3\rho_0^k}(b_k)} u(x, t) \geq 2 \nu^{k} L_1 m_0,
\]
where $b_k = \nu^{-k(p-2)}$ and $\rho_0 = \rho_0(\nu) \in (0,1)$ is as in Lemma~\ref{lem:propagation-space-3}.

 \end{lemma}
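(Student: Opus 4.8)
The plan is to argue by contradiction. Set $\mathcal{C} := (x_0,t_0) + Q^-_{3\rho_0^k}(b_k)$ and suppose, contrary to the claim, that $\sup_{\mathcal{C}} u < 2\nu^k L_1 m_0$; I will then produce two incompatible measure estimates on one small space--time cylinder. Since $u$ is a viscosity subsolution of \eqref{eq2} and $\mathcal{P}^{+}_{\lambda,\Lambda}(-M) = -\mathcal{P}^{-}_{\lambda,\Lambda}(M)$, the function $v := 2\nu^k L_1 m_0 - u$ is a nonnegative viscosity supersolution of \eqref{eq1} on $\mathcal{C}$, and from \eqref{eq:sub-Asum} together with the identity $\nu - 1 = \tfrac{1/2}{L_1-1/2}$ one gets
\[
v(x_0,t_0) \le 2\nu^{k-1}L_1 m_0\,(\nu-1) = \nu^k m_0 .
\]
The particular value of $\nu$ in the statement is exactly what makes this estimate close.

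\emph{First estimate.} Rescale $v$ onto $Q_3^-$ by $w(y,s) := \nu^{-k}\, v\big(x_0 + \rho_0^k y,\, t_0 + \rho_0^{kp}\nu^{-k(p-2)} s\big)$; since $b_k(3\rho_0^k)^p = 3^p\rho_0^{kp}\nu^{-k(p-2)}$, the affine map $(y,s)\mapsto(x_0+\rho_0^k y,\ t_0+\rho_0^{kp}\nu^{-k(p-2)}s)$ carries $Q_3^-$ bijectively onto $\mathcal{C}$, so by Remark~\ref{rem:scaling} $w$ is a nonnegative viscosity supersolution of \eqref{eq1} on $Q_3^-$ with $w(0,0)\le m_0$. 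Corollary~\ref{Cor:bmu1} applied to $w$ gives $|\{w\ge L_1 m_0\}\cap B_1\times(-2,-1]| \le \omega_n 4^{-(n+1)}$. Transporting this back through the map, and noting $w\ge L_1m_0 \iff u\le \nu^kL_1m_0$, we obtain on the sub-cylinder
\[
\mathcal{C}' := B_{\rho_0^k}(x_0)\times\big(t_0-2\rho_0^{kp}\nu^{-k(p-2)},\ t_0-\rho_0^{kp}\nu^{-k(p-2)}\big]\subset\mathcal{C},\qquad |\mathcal{C}'|=\omega_n\rho_0^{k(n+p)}\nu^{-k(p-2)},
\]
the bound $\big|\{(x,t)\in\mathcal{C}':u(x,t)\le \nu^kL_1m_0\}\big| \le 4^{-(n+1)}|\mathcal{C}'|$.

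\emph{Second estimate.} Fix $k_1\ge k_0$, with $k_0=k_0(\nu)$ as in Lemma~\ref{lem:propagation-space-3}. Applying that lemma to $u$ at $(x_0,t_0)\in B_{4/3}\times(-3,-1]$ with exponent $k$ produces $\bar x\in B_{\rho_0^k}(x_0)$ with $u(\bar x,t_0)\le\nu^k m_0$. Rescale $u$ around $(\bar x,t_0)$ with the \emph{same} parabolic scales: $\tilde u(y,s):=\nu^{-k}\,u\big(\bar x+\rho_0^k y,\ t_0+\rho_0^{kp}\nu^{-k(p-2)}s\big)$. Enlarging $k_1$ if necessary so that $|\bar x|+3\rho_0^k\le 3$ and the time image stays above $-3^p$ (using $|\bar x|\le\tfrac43+\rho_0^k$), Remark~\ref{rem:scaling} makes $\tilde u$ a nonnegative viscosity supersolution of \eqref{eq1} on $Q_3^-$ with $\tilde u(0,0)\le m_0$. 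Corollary~\ref{Cor:bmu1} applied to $\tilde u$, transported back (now $\tilde u\ge L_1m_0\iff u\ge\nu^kL_1m_0$), gives on
\[
\mathcal{D} := B_{\rho_0^k}(\bar x)\times\big(t_0-2\rho_0^{kp}\nu^{-k(p-2)},\ t_0-\rho_0^{kp}\nu^{-k(p-2)}\big],
\]
which has the \emph{same} time interval as $\mathcal{C}'$ and satisfies $|\mathcal{D}|=|\mathcal{C}'|$, the bound $\big|\{(x,t)\in\mathcal{D}:u(x,t)\ge\nu^kL_1m_0\}\big|\le 4^{-(n+1)}|\mathcal{D}|$.

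\emph{Conclusion and the main difficulty.} Since $\mathcal{C}'$ and $\mathcal{D}$ are balls of radius $\rho_0^k$ over a common time interval whose centres lie at distance $<\rho_0^k$, an elementary volume computation gives $|\mathcal{C}'\cap\mathcal{D}|\ge c_n|\mathcal{C}'|$, where $c_n:=|B_1(0)\cap B_1(e)|/|B_1(0)|\in(0,1)$ is the relative volume of the lens of two unit balls at unit distance, and one checks $c_n>2\cdot4^{-(n+1)}$ for every $n$. On the other hand, writing $E:=\{u>\nu^kL_1m_0\}$, the first estimate says $|E^c\cap\mathcal{C}'|\le 4^{-(n+1)}|\mathcal{C}'|$ and the second says $|E\cap\mathcal{D}|\le 4^{-(n+1)}|\mathcal{D}|$; intersecting with $\mathcal{C}'\cap\mathcal{D}$ and using that $E$ and $E^c$ partition it forces $|\mathcal{C}'\cap\mathcal{D}|\le 4^{-(n+1)}(|\mathcal{C}'|+|\mathcal{D}|)=2\cdot4^{-(n+1)}|\mathcal{C}'|$, a contradiction. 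The delicate point — and the reason for the precise form of the statement (waiting $b_k=\nu^{-k(p-2)}$, radius $3\rho_0^k$, level $2\nu^kL_1m_0$, and $\nu=L_1/(L_1-1/2)$) — is the synchronisation of the two rescalings: Corollary~\ref{Cor:bmu1} controls only the single level $L_1m_0$, so both applications must be normalised by the identical factor $\nu^k$, which is possible precisely because the propagation of boundedness delivers a point with $u\le\nu^km_0$ at error radius $\rho_0^k$, matching the scale at which the weak Harnack estimate lives; any mismatch of exponents would drift the two level sets apart and break the overlap argument.
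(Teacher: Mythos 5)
Your proof is correct and follows essentially the same route as the paper's: a contradiction argument applying Corollary~\ref{Cor:bmu1} twice, once to the flipped and rescaled function $2\nu^kL_1m_0-u$ (nonnegative by the contradiction hypothesis, with value $\le \nu^k m_0$ at $(x_0,t_0)$ since $2(\nu-1)L_1=\nu$) and once to $u$ rescaled around the point $\bar x\in B_{\rho_0^k}(x_0)$ supplied by Lemma~\ref{lem:propagation-space-3}, then deriving a measure contradiction on the overlap of the two congruent cylinders over the common time interval. The only cosmetic deviations are your normalization by $\nu^k$ instead of $2\nu^{k-1}(\nu-1)L_1$ (the same number) and your use of the lens $B_{\rho_0^k}(x_0)\cap B_{\rho_0^k}(\bar x)$ (which indeed has relative volume at least $2^{-n}>2\cdot 4^{-(n+1)}$, e.g.\ by the inscribed ball of radius $\rho_0^k/2$) where the paper uses the ball $B_{\rho_0^k/4}\big(\tfrac{x_0+\bar x}{2}\big)$.
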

  
\begin{proof}
Choose $k_1 > k_0$, where $k_0$ is from Lemma~\ref{lem:propagation-space-3},   such that $\rho_0^{k_1} \le \tfrac{1}{4}$.  
We argue by contradiction and assume that \eqref{eq:sub-Asum}
 holds but 
 \begin{align}\label{eq:sub-cont}
\sup_{(x_0,t_0)+Q_{3 \rho_0^k}^-(b_k)} u(x,t) < 2\nu^{k} L_1 m_0.
\end{align}
Consider the  function $v : Q_3^- \to \mathbb{R}$,
\[
v(y,s) = \frac{2  \nu^{k} L_1 m_0 - u(\rho_0^k y + x_0, \rho_0^{k p} b_k s + t_0)}{2 \nu^{k-1} (\nu - 1) L_1},
\]
Then, by the choice of $k_1$ and the assumption $k \ge k_1$, the function $v$ is defined in $Q_3^-$. Since $b_k = \nu^{-k(p-2)} = (2 \nu^{k-1} (\nu - 1) L_1)^{-(p-2)}$, it follows from Remark~\ref{rem:scaling} and~\eqref{eq:sub-cont} that $v$ is a nonnegative supersolution of~\eqref{eq1}. Moreover, using the assumption \eqref{eq:sub-Asum}, we have $v(0,0) \leq m_0$.
 We may apply Corollary~\ref{Cor:bmu1} to deduce that
\[
\left| \left\{ (y,s) \in B_1\! \times \!(-2, -1] : v(y,s) \geq L_1 m_0 \right\} \right| \leq \frac{\omega_n}{4^{n+1}}.
\]
Writing in the original coordinates and using $2(\nu-1)L_1=\nu$, we find
\begin{equation}\label{eq:super-estimate-2}
\big|\{ (x,t) \in  (x_0,t_0) + B_{\rho_0^{k}} \! \times \! I_k \; : \;  u(x,t) \leq \nu^{k}L_1 m_0   \}\big|  \leq \frac{\omega_n}{4^{n+1}} b_k \rho_0^{k(n+p)},
\end{equation}
where $I_k = (-2 \rho_0^{kp}b_k,  -\rho_0^{kp}b_k]$. 

By assumption, $(x_0,t_0) \in B_{4/3} \! \times \! (-3, -1]$ and $u(0,0) \le m_0$. Therefore by Lemma~\ref{lem:propagation-space-3} there is  a point $\bar{x} \in B_{\rho_0^{k}}(x_0)$ such that
\[
u\left(\bar{x},\, t_0 \right) \leq  \nu^{k} m_0.
\]
We now define  $w:  Q_3^- \to \R $ as
\[
w(y,s) = \frac{u(\rho_0^{k} y +\bar x, \rho_0^{kp} b_k \, s + t_0)}{\nu^{k} } .
\]
Since $b_k=\nu^{-k(p-2)}$, it follows from Remark \ref{rem:scaling} that  $w$ is a nonnegative supersolution of \eqref{eq1} with $w(0,0) \le m_0.$ We may apply Corollary \ref{Cor:bmu1} to $w$ and obtain
\[
\big|\{ (y,s) \in  B_{1} \!\times \! (-2, -1] :  w(y,s) \geq L_1 m_0 \}\big|  \leq \frac{\omega_n}{4^{n+1}}.
\]
Rewriting in the original coordinates, we find
\begin{equation}\label{eq:super-estimate-3}
\big|\{ (x,t) \in  (\bar x,t_0) +  B_{\rho_0^{k}} \! \times \! I_k  :  u(x,t) \geq  \nu^{k}L_1 m_0  \}\big|  \leq \frac{\omega_n}{4^{n+1}} b_k \rho_0^{k(n+p)}.
\end{equation}

It is easy to see that $\bar{x} \in B_{\rho_0^{k}}(x_0)$ implies 
\begin{equation}\label{eq:super-estimate-4}
B_{\rho_0^{k}/4}\left( \tfrac{x_0 + \bar{x}}{2} \right)  \subset  B_{\rho_0^{k}}(\bar{x}) \cap B_{\rho_0^{k}}(x_0).
\end{equation}
Using \eqref{eq:super-estimate-4} and \eqref{eq:super-estimate-2}, we trivially obtain
\[
\big|\{ (x,t) \in  (\tfrac{x_0 + \bar{x}}{2},t_0)+ B_{\rho_0^{k}/4}\! \times \! I_k \; : \;  u(x,t) \leq   \nu^{k}L_1 m_0 \}\big|  \leq \frac{\omega_n}{4^{n+1}} b_k \rho_0^{k(n+p)},
\] 
while  \eqref{eq:super-estimate-4} and \eqref{eq:super-estimate-3} imply 
\[
\big|\{ (x,t) \in  (\tfrac{x_0 + \bar{x}}{2},t_0)+ B_{\rho_0^{k}/4}\! \times \! I_k \; : \;  u(x,t) \geq   \nu^{k}L_1 m_0 \}\big|  \leq \frac{\omega_n}{4^{n+1}} b_k \rho_0^{k(n+p)}. 
\]
Combining the above two estimates, we deduce that
\[
\frac{\omega_n}{4^{n}}\, b_k \rho_0^{k(n+p)}=\left|B_{\rho_0^{k}/4}(\tfrac{x_0 + \bar{x}}{2})\! \times \! (-2 \rho_0^{kp}b_k,  -\rho_0^{kp}b_k] \right| \le  \frac{2\omega_n}{4^{n+1}} b_k \rho_0^{k(n+p)},
\]
which is a contradiction. This completes the proof of the lemma.

\end{proof}
%
%
%
\begin{proof}[\textbf{Proof of Theorem \ref{thm:harnack}}]
Let $m_0 >0$ be as in Lemma  \ref{lem:barrier} and set $c_1 = m_0$. Arguing as in the beginning of the proof of Theorem  \ref{thm:weak-harnack}, we may assume that  $(x_0, t_0) = (0, 0)$, $\rho = 1$, and $u(0, 0) = m_0$, in which case $\theta_1 = 1$. Hence, it suffices to show that there exists a universal constant $C_1$ such that
\begin{align}\label{eq:backward_Harnack}
 \sup_{Q_1^-} u(\cdot, \cdot -1) \leq C_1 m_0.
\end{align}

Let $L_1$, $k_1$, $\nu$ and $\rho_0 \in (0, 1)$ be as in Lemma \ref{prop:subsolution}. Choose $k_2 > k_1$ such that 
\begin{align}\label{eq:har-1}
3^p\sum_{k = k_2}^{\infty} \rho_0^k < \frac{1}{8}.
\end{align}
We claim that \eqref{eq:backward_Harnack} holds with $C_1=2 \nu^{k_2} L_1$, i.e.,
\begin{align}\label{eq:har-2}
\sup_{Q_1^-} u(\cdot, \cdot -1)  \leq 2 \nu^{k_2} L_1 m_0.
\end{align}

We argue by contradiction and assume  \eqref{eq:har-2} is not true. Then, there exists $(x_1,t_1) \in Q_1^-$ such that 
\[
u(x_1, t_1-1) > 2 \nu^{k_2} L_1 m_0.
\]
Since $(x_1,t_1-1) \in B_{4/3} \! \times \!(-3,-1]$ and $k_2 > k_1$, we may apply Lemma \ref{prop:subsolution} to deduce that there exists a point $(x_2, t_2)$ such that  
\[
|x_2 - x_1| \leq 3 \rho_0^{k_2 + 1}, \quad t_1 - 3^p \rho_0^{(k_2 + 1)p} \le t_2 \leq t_1,
\]
such that
\[
u(x_2, t_2-1) \geq 2 \nu^{k_2 + 1} L_1 m_0.
\]
Proceeding in a similar fashion, we find a sequence $(x_l, t_l)$ with $l \geq 2$ such that  
\[
|x_{l} - x_{l-1}| < 3 \rho_0^{k_2 + l-1}\quad \text{and} \quad   t_{l-1} -3^p \rho_0^{p(k_2 +l-1 )} \le t_{l} \le t_{l-1},
\]
such that
\begin{align}\label{eq:har-3}
u(x_l, t_l-1) \geq 2 \nu^{k_2 + l - 1} L_1 m_0.
\end{align}
This is possible since, from \eqref{eq:har-1}, for all $l \ge 2$ we have  
\[
|x_l| \le |x_1| + 3 \sum_{j=1}^{l-1} \rho_0^{j+k_2} < 1+\frac18 < \frac43
\]
and 
\[
   -1 -\frac18 < t_1 -3^p\sum_{j=1}^{l-1} \rho_0^{(j+k_1)p} \le t_l \le t_1 \le 0.
\] 
Because of these bounds,  each point $(x_l,t_l-1)$ stays inside $B_{4/3} \!\times \! (-3,-1]$ allowing us to apply Lemma \ref{prop:subsolution} repeatedly. Therefore the sequence $(x_l,t_l-1)$ converges to  $(x_\infty, t_\infty) \in B_{4/3} \!\times\! (-3, -1]$. Moreover, by the continuity of $u$, we have $u(x_l, t_l-1) \to u(x_\infty, t_\infty) < \infty$, which contradicts \eqref{eq:har-3}. This completes the proof of \eqref{eq:har-2}, which in turn implies  \eqref{eq:backward_Harnack}.

Our next goal is to prove the forward-in-time Harnack inequality. Let $C_1$ be as in \eqref{eq:backward_Harnack}, and set $c_2=2C_1m_0$. Recall that we assume $(x_0,t_0)=(0,0)$, $\rho=1$ and $u(0,0)=m_0$, and therefore $\theta_2 = (2C_1)^{p-2}$. By the assumption $u$ is a nonnegative viscosity solution of \eqref{maineq} in the cylinder $(0, 2\theta_2) + Q_{4}^-(\theta_2)$.  Therefore, it is enough to prove 
\begin{equation}\label{eq:forward_Harnack}
  m_0=u(0,0) \le 2 C_1\inf_{Q_1^+(\theta_2)} u(\cdot, \cdot+\theta_2).
\end{equation}

The proof is via continuity type argument. Since $u$ is continuous, for small  $r>0$ it holds  
\[
m_0=u(0,0)< 2C_1 \inf_{Q_r^+(\theta_2)}u(\cdot ,\cdot + \theta_2 r^p).
\] 
We choose the  smallest $r \le 1$, denoted by $r_s$, for which 
\begin{equation}\label{eq:cont1}
m_0=u(0,0)= 2C_1 \inf_{Q_{r_s}^+(\theta_2)} u(\cdot,\cdot + \theta_2r_s^p).
\end{equation}
The claim follows once we show    $r_s = 1$.  We argue by contradiction and  assume that  $r_s<1$. From \eqref{eq:cont1} we deduce that  there exists a point $(x_s,t_s)$  in the closure of $Q_{r_s}^+(\theta_2)$ such that  
\[
m_0= 2C_1 u(x_s,t_s+ \theta_2 r_s^p).
\]
We  define $v:Q_3^- \to \R$,
\[
v(x,t)=2C_1u(r_sx+x_s,\theta_2 \, r_s^p \, (t+1)+t_s).
\]
Notice that since $u$ is defined in $(0,2\theta_2)+ Q_{4}^-(\theta_2)$, $v$ is  defined in $Q_3^-.$ Indeed, first $x \in B_3$ and $x_s \in \bar B_{r_s}$  imply 
$ |r_sx+x_s| < 4$. Regarding the time variable, since $t\leq 0$ and  $t_s < \theta_2$, we easily have $\theta_2  r_s^p  (t+1)+t_s < 2 \theta_2.$ Using $t >-3^p$ and $t_s \ge 0$, we find
\[
\theta_2  r_s^p  (t+1)+t_s \ge  \theta_2 (-3^p+1) >  -4^p \theta_2  +2\theta_2.
\]

From the definition of $\theta_2=(2C_1)^{p-2}$, we have that $v$ is a nonnegative viscosity supersolution of \eqref{eq1} and viscosity subsolution of \eqref{eq2} in $Q_3^-$ with $v(0,0)= 2C_1 u(x_s,t_s+\theta_2 r_s^p)=m_0.$ Thus, we may apply \eqref{eq:backward_Harnack} to obtain
\[
\sup_{Q_1^-} v(\cdot, \cdot - 1) \le C_1 v(0,0).
\]
Since the point $\left(-\tfrac{x_s}{r_s}, -\tfrac{t_s}{\theta_2 r_s^p}\right)$ lies in the closure of $Q_1^-$, the above inequality, in particular, yields
\[
2C_1 u(0,0) \le C_1 m_0,
\]
which is a contradiction, since $u(0,0) = m_0 > 0$.
 Hence, we have $r_s=1$ and 
\[
m_0=u(0,0)\le  2C_1 \inf_{Q_1^+(\theta_2)} u(\cdot,\cdot +\theta_2).
\]
Finally, we take $C=2C_1.$ This completes the proof of the theorem.

\end{proof}

\section*{Appendix}

\begin{proof}[\textbf{Proof of Proposition \ref{prop:vitali}}]
Without loss of generality, using the fact that $\R^{n+1}$ is a Lindelöf space, we may  assume that $\F$ is a countable family. We begin by partitioning the family  $\mathcal{F}$ as follows: for $k \ge 0$,
\[\mathcal{F}_k=\{Q_i \in \mathcal{F} \ : \ 2^{-k-1} < \rho_i \le 2^{-k}\},
 \]
where $Q_i= (x_i,t_i)  +  \bar Q_{\rho_i}^+(\theta)$. We first choose  $\tilde \F_1$ to be a maximal family of disjoint cubes in $\F_1$. By  Zorn's lemma, this set can be chosen. We define a subfamily $\tilde \F_k \subset \F_k$ for $k >1$ as  follows: Assume $\tilde \F_j$ has been defined for $1 \le j \le k-1.$ First, we define the set 
\[
\mathcal H_k= \{Q_i \in \F_k \ :\ Q_i \cap Q = \emptyset \,\, \text{for all} \,\,  Q \in \cup_{j=1}^{k-1}\tilde \F_j\}
\]
We choose $\tilde \F_k$ to be a maximal family of disjoint cubes in $\mathcal H_k$. We claim that $\cup_{k=1}^{\infty}\tilde \F_k$ is the required set, $\tilde \F.$ Clearly, cubes in $\tilde \F$ are pairwise disjoint. Let $Q_i \in \F_k$ for some $k\ge 0.$ We need to show $Q_i \subset  \bigcup_{Q_k \in  \tilde \F}  5 Q_k$.  One of the following holds:
\begin{itemize}
\item[(i)] $Q_i \in \tilde \F_k,$ in  which case the claim is trivially true. 
\item[(ii)] $Q_i \in \mathcal H_k \setminus \tilde \F_k$, and then by the maximality of  $\tilde \F_k,$ we have that $Q_i \cap Q \neq \emptyset$ for some $Q \in \tilde \F_k.$
\item[(iii)]$Q_i \notin \mathcal H_k,$ then, we have that $Q_i \cap Q \neq \emptyset$ for some $Q \in \cup_{j=1}^{k-1}\tilde \F_j.$
\end{itemize}
In  the cases (ii) and (iii), we have that $Q_i \cap Q_l \neq \emptyset$ for some $Q_l \in \cup_{j=1}^{k}\tilde \F_j.$ Since $Q_i \in \F_k,$ $2^{-k-1} < \rho_i \le 2^{-k}.$ Also, $Q_l \in \cup_{j=1}^{k}\tilde \F_j$ gives us that  $2^{-k-1} < \rho_l \le 1$, which implies $\rho_i \leq 2 \rho_l$.  We  show that $Q_i \subset 5Q_l.$ Let $(x,t) \in Q_i$ and $(y,s) \in Q_i \cap Q_l.$ Notice that 
\[
|x-x_l| \le |x-y|+|y-x_l| \le 2\rho_i + \rho_l  \leq  4 \rho_l +\rho_l=5 \rho_l
\]
and
\[
|t -t_l| \le |t-s|+|s-t_l| \le \theta \rho_i^p + \theta \rho_l^p \le \theta (2\rho_l)^p + \theta \rho_l^p  \le \theta (5\rho_l)^p. 
\]
This completes the proof.
\end{proof}

\section*{Acknowledgments}
The authors were supported by the Academy of Finland grant 314227.

\end{document}